\newtheorem{thm}{Theorem}[section]
\newtheorem{prop}{Proposition}[section]
\newtheorem{rem}{Remark}[section]
\newtheorem{defi}{Definition}[section]
\newcommand{\eps}{\varepsilon}
\newcommand{\R}{\mathbb{R}}
\newcommand{\N}{\mathbb{N}}
\newcommand{\Z}{\mathbb{Z}}
\renewcommand{\div}{{\rm div}\,}
\newcommand{\Id}{{\rm Id}\,}
\newcommand\ds{\displaystyle}
\newcommand{\Int}{\displaystyle \int}
\def\d{\partial}
\def\ddj{\dot \Delta_j}
\def\dj{\Delta_j}
\def\tilde{\widetilde}
\def\div{{\rm div}\,}
\def\cC{{\mathcal C}}
\def\cF{{\mathcal F}}
\def\cP{{\mathcal P}}
\def\cQ{{\mathcal Q}}
\def\cS{{\mathcal S}}
\def\dt{\delta\!\theta}
\def\dQ{\delta\!Q}
\def\du{\delta\!u}
\begin{document}

\title{On the well-posedness of the full low-Mach number limit system 
in general critical Besov spaces}
\author{Rapha\"{e}l DANCHIN and Xian LIAO\thanks{Universit\'{e} Paris-Est Cr\'{e}teil, LAMA, UMR 8050, 61 Avenue de G\'{e}n\'{e}ral de Gaulle, 94010 Cr\'{e}teil Cedex, France}}
\date\today

\maketitle

\begin{abstract}
This work is devoted to  the well-posedness issue for the  low-Mach number
 limit system obtained from the full compressible
 Navier-Stokes system, in the whole space $\R^d$ with $d\geq2.$
 
 In the case where the initial temperature (or density) is close to  a positive  constant, we establish the local existence and
 uniqueness of a solution in critical homogeneous Besov spaces
 of type $\dot B^s_{p,1}.$ If, in addition,  the initial velocity is small then
 we show that the solution exists for all positive time. 
 In the fully nonhomogeneous case,
 we establish the local well-posedness in nonhomogeneous Besov spaces
 $B^s_{p,1}$ (still with critical regularity)  for 
 arbitrarily large data with positive initial temperature. 
  
  Our analysis strongly relies on the use of a \emph{modified} divergence-free velocity which
  allows to reduce the system to a nonlinear coupling between 
  a parabolic equation and some evolutionary Stokes system. 
  As in the recent work by Abidi-Paicu \cite{AbidiP07} concerning the
 density dependent incompressible Navier-Stokes equations,
  the Lebesgue exponents of the Besov spaces 
 for the temperature and the (modified) velocity, need not be the same. 
 This enables us to consider initial data  in Besov spaces
 with a  \emph{negative} index of regularity.  
 \end{abstract}

\section{Introduction}
\setcounter{equation}{0}

The full Navier-Stokes system
\begin{equation}\label{eq:fullnewtonian}\left\{
\begin{array}{ccc}
\d_t\rho+\div(\rho v)&=&0,\\
\d_t(\rho v)+\div(\rho v\otimes v)-\div \sigma+\nabla p&=&0,\\
\d_t(\rho e)+\div(\rho v e)-\div(k\nabla\vartheta)+p\,\div v&=&\sigma\cdot Dv,
\end{array}
\right.
\end{equation}
governs the free evolution  of a viscous and heat conducting
compressible Newtonian fluid. 

In the above system,  $\rho=\rho(t,x)$
 stands for the mass density, $v=v(t,x),$ for  the velocity field and $e=e(t,x),$
for  the internal energy per unit mass. The time variable $t$
belongs to $\R^+$ or to $[0,T]$ and the space variable $x$ is in $\R^d$
with $d\geq2.$ 
The scalar functions  $p=p(t,x)$ and $\vartheta=\vartheta(t,x)$ denote the pressure and temperature respectively, and  $\sigma$ is  the viscous strain tensor,  given by
$$
\sigma=2\zeta Sv+\eta\, \div v \,\Id,
$$
where  $\Id$ is the $d\times d$ identity matrix, 
$Sv:=\frac12(\nabla v+Dv),$  the so-called deformation 
tensor\footnote{In all the paper, we agree that for $v=(v^1,\cdots,v^d)$ a vector field, 
 $(Dv)_{ij}:=\d_jv^i$ and $(\nabla v)_{ij}:=\d_iv^j.$}.
  The  heat conductivity $k$ and the Lam\'e (or viscosity) 
 coefficients  $\zeta$ and $\eta$ may depend smoothly on $\rho$ and on $\vartheta.$
 The above system has to be supplemented with  two {\it state equations}
involving $p$,~$\rho$,~$e$ and~$\vartheta.$

\bigbreak

In the present paper, we want to consider  the low Mach number limit of the full Navier-Stokes System 
\eqref{eq:fullnewtonian}. {}From an heuristic viewpoint, this  amounts to
neglecting  the compression due to pressure variations,  a common assumption when describing highly subsonic flows.  Following   the introduction of P.-L. Lions's book \cite{PLions96}
(see also the physics book by Zeytounian \cite{Zeytounian04}), we here
explain  how  the low Mach number limit system may be derived \emph{formally}
from \eqref{eq:fullnewtonian}.
 For simplicity we restrict ourselves to the case of an \emph{ideal} gas, namely we assume that
\begin{equation}\label{eq:state}
p=R\rho \vartheta,\quad e=C_v\vartheta,
\end{equation}
where $R,C_v$ denote the ideal gas constant and the specific heat constant, respectively. 
\smallbreak
Let us define   the  (dimensionless) 
 Mach number  $\eps$ as the ratio of the reference velocity 
 over the reference sound speed of the fluid, 
 then suppose that $(\rho, v, \vartheta)$ is some given  classical solution of System \eqref{eq:fullnewtonian} corresponding to the small  $\eps.$ 
Then the rescaled triplet
$$
\left(\rho_\eps(t,x)=\rho\Bigl(\frac{t}{\eps},x\Bigr),\quad 
v_\eps(t,x)=\frac{1}{\eps}v\Bigl(\frac{t}{\eps},x\Bigr),\quad 
\vartheta_\eps(t,x)=\vartheta\Bigl(\frac{t}{\eps},x\Bigr)\right)
$$
satisfies
\begin{equation}\label{eq:epsilon}\left\{
\begin{array}{ccc}
\d_t\rho_\eps+\div(\rho_\eps v_\eps)&=&0,\\[1ex]
\d_t(\rho_\eps v_\eps)+\div (\rho_\eps v_\eps\otimes v_\eps)-\div\sigma_\eps+\frac{\nabla p_\eps}{\eps^2}&=&0,\\[1ex]
\frac{1}{\gamma-1}(\d_tp_\eps+\div(p_\eps v_\eps ))-\div(k_\eps\nabla \vartheta_\eps)+p_\eps\div v_\eps&=&\eps^2\sigma_\eps\cdot Dv_\eps,
\end{array}
\right.
\end{equation}
with
$$
\displaylines{
\sigma_\eps=2\zeta_\eps Sv_\eps+\eta_\eps \div v_\eps \Id,\quad p_\eps=R\rho_\eps\vartheta_\eps,
\quad \gamma=1+{R}/{C_v},\cr
\zeta_\eps=\frac{1}{\eps}\zeta\Bigl(\frac{t}{\eps},x\Bigr),\quad
\eta_\eps=\frac{1}{\eps}\eta\Bigl(\frac{t}{\eps},x\Bigr)\ \hbox{ and }\ 
k_\eps=\frac{1}{\eps}k\Bigl(\frac{t}{\eps},x\Bigr).}
$$

By letting the Mach number $\eps$ go to $0$, the momentum equation of (\ref{eq:epsilon}) implies that
$$
p_\eps=P(t)+\Pi(t,x)\eps^2+o(\eps^2).
$$
Plugging this formula into the energy equation of (\ref{eq:epsilon}) entails that $P(t)$ is independent of $t$, provided $v_\eps$ and $\nabla\vartheta_\eps$ vanish at infinity.
{}From now on, we shall denote this constant by $P_0.$ 

Bearing in mind the equation of state given in \eqref{eq:state}, 
we deduce that  $\rho=P_0/(R\vartheta).$
Therefore, denoting $C_p=\gamma C_v=\gamma R/(\gamma-1),$   the low Mach number limit system reads
\begin{equation}\label{eq:original Mach limit}
\left\{ \begin{array}{ccc}
\rho C_{P} (\d_{t}\vartheta+ v \cdot \nabla\vartheta)-\div( k\nabla\vartheta ) & =&0,\\
\rho ( \d_{t}v+v\cdot\nabla v )-\div\sigma + \nabla\Pi & = & 0, \\
\gamma P_0 \div v - (\gamma -1)\div( k \nabla\vartheta)& = &0.
\end{array} \right.
\end{equation}

A number of mathematical results concerning  the low Mach number limit from (\ref{eq:epsilon}) to (\ref{eq:original Mach limit}) have been obtained in the past three decades, most
of them being dedicated to   the isentropic  or to the  barotropic isothermal cases
(that is   $\vartheta\equiv const$ and $p=p(\rho)$)
under the assumption that the viscosity coefficients are independent of $\rho.$
{}From a technical viewpoint, those  latter cases are easier to deal with inasmuch as  only 
the first two equations of System \eqref{eq:epsilon} have to be considered. 
As a consequence,  the expected limit system is  the standard incompressible
 Navier-Stokes or Euler  system.
In the inviscid case,  the first mathematical 
results concerning the incompressible limit go back to the eighties with the works by  Klainerman and Majda \cite{KlainermanM82},  Isozaki \cite{Isozaki87,Isozaki89} and Ukai \cite{Ukai86}.  
In the viscous case, the justification of the \emph{incompressible Navier-Stokes} equation as  the zero-Mach limit of the \emph{compressible Navier-Stokes} equation, has been 
done in different contexts by  e.g.  Danchin \cite{Danchin02perio,Danchin02limit},  Desjardins and Grenier \cite{DesjardinsG99}, Hagstr\"om and Lorentz \cite{HL}, Hoff \cite{Hoff98}, Lions \cite{PLions96vol2}, and Lions and Masmoudi \cite{PLionsMasmoudi98}. 
In contrast, there are quite a few results for  the full  system. The inviscid case -- the \emph{non-isentropic Euler}, 
has been considered in the whole space and periodic settings 
by   M\'{e}tivier and Schochet \cite{MetivierS01,MetivierS03}. 
  Recently  Alazard  performed a rigorous analysis for the \emph{full Navier-Stokes} equations with large temperature variations in the \emph{Sobolev} spaces $H^s$ with $s$ large enough, see \cite{Alazard06}. 
  In the framework of   variational solutions with finite energy, this asymptotics has been 
  justified in the somewhat different regime of Oberbeck-Boussinesq approximation 
  (see the book   \cite{FeireislNovotny} by Feireisl-Novotn\'y).

To our knowledge, the only work dedicated to
 the limit system \eqref{eq:original Mach limit} in the general case
where $\vartheta$ is not a constant or the conductivity $k$ is not zero
(note that if $k\equiv0$ then  the system reduces to the nonhomogeneous incompressible
Navier-Stokes equations studied in e.g. \cite{AbidiP07,Danchin03})
is the paper \cite{Embid} by P. Embid. There, local-in-time  existence of smooth solutions 
(in Sobolev spaces) is established
not only for \eqref{eq:original Mach limit} but also for a more complicated system
of reacting flows.

The present  paper is to study the \emph{well-posedness} issue for the full low Mach number
limit system (\ref{eq:original Mach limit}) in the critical Besov spaces, locally and globally.
We expect our work  to be the first step of justifying rigorously the limit process  in the critical Besov spaces, a study that we plan to do in the future.

\medbreak
In what follows, we assume
that  the coefficients $(\zeta,\eta,k)$ in \eqref{eq:original Mach limit}
are $C^\infty$ functions of the temperature $\vartheta$, 
and we consider only the viscous and heat-conducting case, namely 
$$
k(\vartheta)>0,\qquad  \zeta(\vartheta)>0\qquad \textrm{  and  }\qquad \eta(\vartheta)+2\zeta(\vartheta)>0.
$$
At first sight, this assumption ensures that
System \eqref{eq:original Mach limit}  is of parabolic type, up 
to the pressure term that may be seen as  the Lagrange multiplier
corresponding to the constraint given in the last equation.
Handling this relation   between $\div v$
and the temperature is the first difficulty that has to be faced. 
In order to reduce the study to a system which looks more like
the incompressible Navier-Stokes equations, it is 
natural to perform the following change of velocity:
\begin{equation}\label{eq:newvelocity}
u=v-\alpha k\nabla\vartheta\quad\hbox{with}\quad
\alpha:=\frac{\gamma-1}{\gamma P_0}=\frac{R}{C_p P_0}=\frac{1}{C_p \rho\vartheta}\cdotp
\end{equation}
We claim  that $(\vartheta,v)$ satisfies  \eqref{eq:original Mach limit}
(for some $\nabla\Pi$) if and only there exists some function $Q$ so that 
$(\vartheta,u)$ fulfills 
\begin{equation}\label{equation:local}\left\{
\begin{array}{ccc}
\d_t\vartheta+u\cdot\nabla\vartheta-\div(\kappa\nabla\vartheta)&=&f(\vartheta),\\
\d_t u+u\cdot\nabla u-\div(\mu\nabla u)+\vartheta\nabla Q&=&h(\vartheta,u),\\
\div u&=&0,
\end{array}
\right.\end{equation}
with 
$\kappa(\vartheta)=\alpha k \vartheta,$ 
$\mu(\vartheta)=\alpha C_p \zeta\vartheta,$
$f(\vartheta)=-2\alpha k|\nabla\vartheta|^2$
and
\begin{align*}
h(\vartheta,u)=A_1|\nabla\vartheta|^2\nabla\vartheta
+A_2\Delta\vartheta\nabla\vartheta+A_3\nabla\vartheta\cdot\nabla ^2\vartheta
+A_4\nabla u\cdot\nabla\vartheta+A_5Du\cdot\nabla\vartheta,
\end{align*}
where the coefficients $A_i$ ($i=1,\cdots 5$) are functions of $\vartheta$
the value of which is given in \eqref{eq:A} below. 
\medbreak
In order to derive \eqref{equation:local}, we first notice that 
$$
\begin{array}{lll}
\d_t(\rho v)+\div(\rho v\otimes v)&=&\d_t(\rho u)+\div(\rho v\otimes u)
+\d_t(\rho\alpha k\nabla \vartheta)+\div(\rho\alpha kv\otimes\nabla\vartheta),\\[1ex]
&=&\rho(\d_tu+v\cdot\nabla u)+\d_t(C_p^{-1}\vartheta^{-1}k\nabla\vartheta)
+\div(C_p^{-1}\vartheta^{-1}kv\otimes\nabla\vartheta).
\end{array}
$$
Hence, given that $k$ is a function of $\vartheta$ we deduce that
there exists some function $Q_1$ so that 
$$
\d_t(\rho v)+\div(\rho v\otimes v)=\rho(\d_tu+u\cdot\nabla u)+\rho\alpha kDu\cdot\nabla\vartheta
+\div(C_p^{-1}\vartheta^{-1}kv\otimes\nabla\vartheta)+\nabla Q_1.
$$
Next, using the fact that
$$
Sv=\frac 12(\nabla u-Du)+Dv,
$$
we may write that
$$
\begin{array}{lll}
-\div\sigma&=&-\div(\zeta\nabla u)+\div(\zeta Du)-2\div(\zeta Dv)-\nabla(\eta\div v),\\[1ex]
&=&-\div(\zeta\nabla u)+\nabla u\cdot\nabla\zeta+2\div(v\otimes\nabla\zeta)-\nabla(\eta\div v+2\div(\zeta v)).
\end{array}
$$
Therefore,   $(\rho,v)$ satisfies \eqref{eq:original Mach limit} (for some suitable $\Pi$) if and only if 
there exists some $Q_2$ so that 
$$\left\{
\begin{array}{l}
\d_t\rho+u\cdot\nabla\rho-\div(\rho\alpha k\nabla\vartheta)=0,\\[1ex]
\rho(\d_t u+u\cdot\nabla u)-\div(\zeta\nabla u)+\rho\alpha kDu\cdot\nabla\vartheta
+\nabla u\cdot\nabla\zeta+\div(\beta v\otimes\nabla\vartheta)+\nabla Q_2=0,\\[1ex]
\div u=0,
\end{array}
\right.
 $$
with
\begin{equation}\label{eq:beta}
\beta:=C_p^{-1}\vartheta^{-1}k+2\zeta'.
\end{equation}
Finally, using  that $\div u=0$ we get (denoting by $B$ a primitive of $\beta$)
$$\begin{array}{lll}
\div(\beta v\otimes\nabla\vartheta)&=&\div(\beta u\otimes\nabla\vartheta)+\div(\beta\alpha k\nabla\vartheta\otimes\nabla\vartheta),\\[1ex]
&=&-\beta \nabla u\cdot\nabla\vartheta +\nabla\div(B(\vartheta)u)+
\div(\beta\alpha k\nabla\vartheta\otimes\nabla\vartheta).
\end{array}
$$
So after multiplying the equation for $u$ by  $\rho^{-1}=\alpha C_p\vartheta$ 
and using the fact that
$$
-\alpha C_p\vartheta\div(\zeta\nabla u)=-\div(\mu\nabla u)+\alpha C_p\zeta D u\cdot\nabla\vartheta
\quad\hbox{with}\quad\mu(\vartheta):=\alpha C_p\vartheta\zeta(\vartheta),
$$ 
we get
\eqref{equation:local} with one new $Q$ and
\begin{equation}\label{eq:A}
\begin{array}{c}
A_1=-\alpha^2C_p\vartheta(\beta k)',\quad
A_2=A_3=-\alpha^2\beta kC_p\vartheta,\\[1.5ex]
A_4=-\rho^{-1}\zeta'+\rho^{-1}\beta=k\alpha+\alpha C_p\vartheta\zeta',\quad
A_5=-\alpha C_p\zeta-\alpha k.
\end{array}
\end{equation}

Motivated by prior works on incompressible
or compressible Navier-Stokes equations with variable density
(see in particular \cite{Danchin00,Danchin01global,Danchin01local,Danchin03}),
we shall use scaling arguments so as to determine the optimal
functional framework for solving the above system.

Here we notice that   if  $(\vartheta,u,\nabla Q)$  is a solution of (\ref{equation:local}),  then 
so does 
\begin{equation}\label{eq:scaling1}
(\vartheta(\ell^2t,\ell x),\ell u(\ell^2t,\ell x),\ell^3\nabla Q(\ell^2t,\ell x))\quad\hbox{for all }\ \ell>0.
\end{equation}
Therefore, critical spaces for the initial data $(\vartheta_0,u_0)$ must be norm invariant
by the transform
\begin{equation}\label{eq:scaling2}
(\vartheta_0,u_0)(x)\rightarrow (\vartheta_0(\ell x),\ell u_0(\ell x))\quad\hbox{for all }\ \ell>0.
\end{equation}

\bigbreak

Let us first consider the easier case where the initial temperature
$\vartheta_0$ is close to a constant (say~$1$ to simplify the presentation). 
Then, setting $\theta=\vartheta-1,$  System \eqref{equation:local} recasts in 
\begin{equation}\label{eq:global}
\left\{\begin{array}{ccc}
\d_t\theta+u\cdot\nabla\theta-\bar\kappa\Delta\theta&=&a(\theta),\\
\d_t u+u\cdot\nabla u-\bar\mu\Delta u+\nabla Q&=&c(\theta,u,\nabla Q),\\
\div u&=&0,
\end{array}\right.
\end{equation}
where $\bar\kappa=\kappa(1),$ $\bar\mu=\mu(1)$ and
\begin{align*}
&a(\theta)=\div((\kappa(1+\theta)-\bar\kappa)\nabla \theta)+f(1+\theta),\\
&c(\theta,u,\nabla Q)=\div((\mu(1+\theta)-\bar\mu)\nabla u)-\theta\nabla Q+h(1+\theta,u).
\end{align*}

Let us notice that the following functional space\footnote{The reader
is referred to Definition \ref{def:Besovhom} for the definition of homogeneous Besov spaces.}:
  $$\left(\! L^\infty(\R^+\!;\dot{B}_{p_1,r_1}^{d/{p_1}})\cap
 L^1(\R^+\!;\dot{B}_{{p_1},r_1}^{d/{p_1}+2})\!\right)\times
\left(\!L^\infty(\R^+\!;\dot{B}_{p_2,r_2}^{d/{p_2}-1})\cap
 L^1(\R^+\!;\dot{B}_{p_2,r_2}^{d/{p_2}+1})\!\right)^d
\times \left(\!L^1(\R^+\!;\dot{B}_{p_3,r_3}^{d/{p_3}-1})\!\right)^d$$
 satisfies the  scaling condition  \eqref{eq:scaling1} for any $1\leq p_1,p_2,p_3,r_1,r_2,r_3\leq\infty.$

However, as a $L^\infty$ control over $\theta$ is needed in order to keep
the ellipticity of the second order operators of the system
and since   $\dot B^{d/{p}}_{p,r}\hookrightarrow L^\infty$
if and only if $r=1,$ we shall  assume that $r_1=1.$
In addition, in many places, having 
$\nabla u\in L^1([0,T];L^\infty)$ will be needed. 
This will induce us to choose $r_2=1$ and $r_3=1$ as well. 
So finally, we plan to solve System \eqref{eq:global} in the space
$$
\dot F_T^{p_1,p_2}:=\left(\tilde C_T(\dot {B}_{p_1,1}^{d/{p_1}})\cap
 L^1_T(\dot {B}_{p_1,1}^{d/{p_1}+2})\right)\times
\left(\tilde C_T(\dot {B}_{p_2,1}^{d/{p_2}-1})\cap
 L^1_T(\dot {B}_{p_2,1}^{d/{p_2}+1})\right)^d
\times\left(  L^1_T(\dot {B}_{p_2,1}^{d/{p_2}-1})\right)^d
$$
where $\tilde C_T(\dot B^\sigma_{p,1})$ is a (large) subspace of 
$\cC([0,T]; \dot B^\sigma_{p,1})$ (see Definition \ref{def:Besov,tilde}). 
\smallbreak
In what follows, we shall denote
\begin{align*}
&\|\theta\|_{\dot X^{p_1}(T)}=\|\theta\|_{\tilde L^\infty_T(\dot B^{d/{p_1}}_{p_1,1})}+\|\theta\|_{L^1_T(\dot B^{d/{p_1}+2}_{p_1,1})},\\
&\|u\|_{\dot Y^{p_2}(T)}=\|u\|_{\tilde L^\infty_T(\dot B^{d/{p_2}-1}_{p_2,1})}+\|u\|_{ L^1_T(\dot B^{d/{p_2}+1}_{p_2,1})},\\
&\|\nabla Q\|_{\dot Z^{p_2}(T)}=\|\nabla Q\|_{L^1_T(\dot B^{d/{p_2}-1}_{p_2,1})},
\end{align*}
and we will drop $T$ in $\dot X^{p_1}(T)$, $\dot Y^{p_2}(T)$, $\dot Z^{p_3}(T)$
 if $T=+\infty$.
\smallbreak

Let us now state our main result for \eqref{eq:global} in the slightly nonhomogeneous case
(that is under a smallness condition for $\theta_0$).

\begin{thm}\label{th:global}
Let  $\theta_0\in \dot{B}^{d/{p_1}}_{p_1,1}$ and  $u_0\in
\dot{B}^{d/{p_2}-1}_{p_2,1}$ with $\div u_0=0.$
Assume that
\begin{equation}\label{cond:global,p}
1\leq p_1<2d, \quad
1\leq p_2<\infty,\quad
p_1\leq 2p_2,\quad
\frac{1}{p_1}+ \frac{1}{p_2}>\frac{1}{d},\quad
\frac{1}{p_2}+\frac{1}{d}\geq\frac{1}{p_1}
\ \hbox{ and }\ \frac{1}{p_1}+\frac{1}{d}\geq\frac{1}{p_2}\cdotp
\end{equation}
There exist two  constants $\tau$ and $K$  depending only on
the coefficients of  System \eqref{eq:global} and on  $d,p_1,p_2,$ 
and satisfying the following properties:
 \begin{itemize}
 \item If 
\begin{equation}\label{cond:local}
 \|\theta_0\|_{\dot B^{d/{p_1}}_{p_1,1}}\leq \tau,
\end{equation}

then there exists $T\in (0,+\infty]$ such that System \eqref{eq:global} has a unique solution $(\theta,u,\nabla Q)$ in $\dot F^{p_1,p_2}_T$, which satisfies
$$
\|\theta\|_{\dot X^{p_1}(T)}\leq K\|\theta_0\|_{\dot B^{d/{p_1}}_{p_1,1}}
\quad\hbox{and}\quad \|u\|_{\dot Y^{p_2}(T)}+\|\nabla Q\|_{\dot Z^{p_2}(T)}\leq K\bigl(\|\theta_0\|_{\dot B^{d/{p_1}}_{p_1,1}}+\|u_0\|_{\dot B^{d/p_2-1}_{p_2,1}}\bigr).
$$
\item If 
\begin{equation}\label{cond:global}
\|\theta_0\|_{\dot B^{d/{p_1}}_{p_1,1}}+\|u_0\|_{\dot B^{d/{p_2}-1}_{p_2,1}}\leq \tau,
\end{equation}
then $T=+\infty$ and the unique global solution satisfies
\begin{equation}\label{estimate:global}
\|\theta\|_{\dot X^{p_1}}+\|u\|_{\dot Y^{p_2}}+\|\nabla Q\|_{\dot Z^{p_2}}\leq 
K\bigl(\|\theta_0\|_{\dot B^{d/{p_1}}_{p_1,1}}+\|u_0\|_{\dot B^{d/{p_2}-1}_{p_2,1}}\bigr).
\end{equation}
\end{itemize}
In addition, the flow map $(\theta_0,u_0)\mapsto (\theta,u,\nabla Q)$ is Lipschitz continuous
from $\dot{B}^{d/{p_1}}_{p_1,1}\times\dot{B}^{d/{p_2}-1}_{p_2,1}$ to $\dot F_T^{p_1,p_2}.$
\end{thm}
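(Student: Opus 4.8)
The plan is to solve System \eqref{eq:global} by a fixed point argument in the space $\dot F_T^{p_1,p_2}$, treating the equations for $\theta$ and for $(u,\nabla Q)$ as linear parabolic resp.\ Stokes systems with right-hand sides depending on the unknowns. First I would collect the necessary linear estimates: for the heat equation $\d_t\theta-\bar\kappa\Delta\theta=g$ one has the maximal regularity estimate controlling $\|\theta\|_{\dot X^{p_1}(T)}$ by $\|\theta_0\|_{\dot B^{d/p_1}_{p_1,1}}+\|g\|_{L^1_T(\dot B^{d/p_1}_{p_1,1})}$, and for the evolutionary Stokes system $\d_tu-\bar\mu\Delta u+\nabla Q=G$, $\div u=0$ (here $\nabla Q$ is recovered by applying $\div$ to get a Laplace equation for $Q$) the analogous estimate bounding $\|u\|_{\dot Y^{p_2}(T)}+\|\nabla Q\|_{\dot Z^{p_2}(T)}$ by $\|u_0\|_{\dot B^{d/p_2-1}_{p_2,1}}+\|G\|_{L^1_T(\dot B^{d/p_2-1}_{p_2,1})}$. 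These are standard for Besov spaces built on $L^1$ in time and follow from Littlewood--Paley decomposition plus Bernstein inequalities; I would cite the earlier sections for them.

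The second, and genuinely technical, ingredient is the nonlinear estimates: I must show that $a(\theta)$ maps into $L^1_T(\dot B^{d/p_1}_{p_1,1})$ and that $c(\theta,u,\nabla Q)$ maps into $L^1_T(\dot B^{d/p_2-1}_{p_2,1})$, with norms controlled (for $\theta$ small in $\dot B^{d/p_1}_{p_1,1}$, hence small in $L^\infty$) by the product of the $\dot X^{p_1}$, $\dot Y^{p_2}$, $\dot Z^{p_2}$ norms, times a power, plus at least one factor that is either small or comes with a time-integrable gain. The terms $\div((\kappa(1+\theta)-\bar\kappa)\nabla\theta)$ and $\div((\mu(1+\theta)-\bar\mu)\nabla u)$ are handled by writing $\kappa(1+\theta)-\bar\kappa=\theta\,\tilde\kappa(\theta)$ with $\tilde\kappa$ smooth, using composition estimates in Besov spaces (valid since $\theta\in L^\infty$ with small norm), and then a paraproduct/product estimate; the quadratic terms like $|\nabla\theta|^2\nabla\theta$, $\Delta\theta\,\nabla\theta$, $\nabla u\cdot\nabla\theta$ are estimated by splitting $\dot X^{p_1}$ and $\dot Y^{p_2}$ via interpolation so that, e.g., $\nabla\theta\in\widetilde L^2_T(\dot B^{d/p_1}_{p_1,1})$ and $\Delta\theta\in\widetilde L^2_T(\dot B^{d/p_1-1}_{p_1,1})$, whose product lies in $L^1_T(\dot B^{d/p_1}_{p_1,1})$ provided the product law $\dot B^{d/p_1}_{p_1,1}\times\dot B^{d/p_1-1}_{p_1,1}\hookrightarrow\dot B^{d/p_1}_{p_1,1}$ holds --- and this is exactly where the arithmetic conditions in \eqref{cond:global,p} on $p_1,p_2,d$ are used (they are precisely the constraints making all the product and embedding inequalities between the $p_1$- and $p_2$-indexed Besov scales valid). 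The term $-\theta\nabla Q$ uses the product law coupling the two scales, again governed by \eqref{cond:global,p}.

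Given these estimates, I would set up the map $\Phi:(\theta,u,\nabla Q)\mapsto(\bar\theta,\bar u,\nabla\bar Q)$ where the latter solve the linear systems with right-hand sides $a(\theta)$, $c(\theta,u,\nabla Q)$ and the prescribed initial data. Under the smallness hypothesis \eqref{cond:local} on $\theta_0$ one first chooses $T$ small (using that, for fixed data, the free solution's norm over $[0,T]$ in the "$L^1$ in time at the top regularity" part tends to $0$ as $T\to0$, by dominated convergence in the $\widetilde L$ norms) so that $\Phi$ maps a suitable ball of $\dot F_T^{p_1,p_2}$ into itself and is a contraction there; the constants $K$ and $\tau$ are read off from the linear and nonlinear estimates. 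Under the stronger smallness \eqref{cond:global} on both $\theta_0$ and $u_0$, the same ball argument works with $T=+\infty$ because now every nonlinear term carries a smallness factor, giving the global bound \eqref{estimate:global}. Uniqueness and the Lipschitz continuity of the flow map follow by estimating the difference of two solutions in the same spaces (or, if a loss is unavoidable, in spaces with one lower index, then upgrading) --- the difference solves linear systems whose right-hand sides are, by the bilinear structure and the above product laws, controlled by the difference of the data times the (small) norms of the solutions. The main obstacle is the bookkeeping in the nonlinear estimates: keeping two distinct Lebesgue exponents $p_1\neq p_2$, respecting all six constraints in \eqref{cond:global,p} simultaneously, and choosing the right time-interpolation exponents for each of the five types of terms in $h$ so that everything lands in $L^1_T$ of the correct space.
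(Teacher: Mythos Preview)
Your proposal is essentially correct and follows the same strategy as the paper, with two minor technical variations worth noting.

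First, the paper uses an iterative (Picard) scheme rather than a direct contraction-mapping argument, and it places the convection terms $u^n\cdot\nabla\theta^{n+1}$ and $u^n\cdot\nabla u^{n+1}$ in the \emph{linear} operator (Propositions~\ref{th:TDnu} and~\ref{th:Stokes}) rather than in the source. This produces the exponential factor $e^{C\|u^n\|_{L^1_t(\dot B^{d/p_2+1}_{p_2,1})}}$ in the a~priori estimates instead of an additional quadratic source term. Both routes are equivalent for the small-data global result; your convention just means you must also estimate $u\cdot\nabla\theta$ and $u\cdot\nabla u$ among the nonlinear terms, which you did not list explicitly but which obey the same product laws under \eqref{cond:global,p}.

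Second, for the local result with large $u_0$ you must make the free-solution splitting explicit. Your sentence about ``$\Phi$ maps a suitable ball into itself'' does not quite close as written: if the ball is centred at~$0$ with radius $R\sim\|u_0\|_{\dot B^{d/p_2-1}_{p_2,1}}$, the quadratic term $u\cdot\nabla u$ contributes $\sim R^2$, which cannot be absorbed. The paper (and implicitly your remark about the free solution) fixes this by writing $u=u_L+\bar u$ with $u_L$ the free heat evolution of $u_0$ and running the iteration/contraction on $\bar u$, which starts from~$0$; the cross terms then carry a factor $\|u_L\|_{L^2_T(\dot B^{d/p_2}_{p_2,1})}$ or $\|u_L\|_{L^1_T(\dot B^{d/p_2+1}_{p_2,1})}$ that vanishes as $T\to0$. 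Once this is made precise, your argument and the paper's coincide.
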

\begin{rem} {\rm A similar statement for the nonhomogeneous incompressible
Navier-Stokes equations has been obtained by Abidi-Paicu in \cite{AbidiP07}.
There, the  conditions over $p_1$ and $p_2$ 
(which stem from  the structure of the nonlinearities)
are not exactly the same as ours
for there is no gain of regularity over the density and the right-hand
side of the momentum equation  in \eqref{eq:global} is simpler. 

Let us stress that in the above statement, 
the homogeneous Besov spaces for  the velocity are almost the same as for the standard incompressible
Navier-Stokes equation (except that in this latter case,  one may take \emph{any}
space $\dot B^{d/p_2-1}_{p_2,r}$ with $1\leq p_2<\infty$ and $1\leq r\leq\infty$).  
In particular, here one may take $p_2$ as large as we want hence
the regularity exponent  $d/p_2-1$ may be negative and  our result ensures
that suitably 
oscillating \emph{large} velocities  give rise to a global solution.}
\end{rem}
The important observation for solving \eqref{eq:global} is that all the ``source terms'' (that
is the terms on the right-hand side) are at least quadratic. 
In a suitable functional framework --the one given by our scaling considerations, we thus expect them to be negligible 
if the initial data are small. Hence, appropriate a priori estimates 
for the linearized system pertaining to \eqref{eq:global}
and suitable  product estimates suffice  to control 
the solution for all time if the data are small. This will enable us to prove
the global existence. 
In addition, a classical argument borrowed from the one
that is used in the constant density case will  enable us to consider large
 $u_0.$ 
\bigbreak
 Let us now turn to the fully nonhomogeneous case.
 Then, in order to ensure the ellipticity of the second order operators in the left-hand side of \eqref{equation:local},  we have to assume that $\vartheta_0$ is bounded by below by some positive constant. Proving a priori estimates for the heat or Stokes equations 
 with variable time-dependent rough coefficients will be the key 
 to our local existence statement. Bounding the gradient of the pressure 
 (namely $\nabla Q$)  is the main difficulty. To achieve it, we will have
 to consider the elliptic equation 
 \begin{equation}\label{equation:Q}
\div(\vartheta\nabla Q)=\div L\quad\hbox{with}\quad  
L:=-u\cdot \nabla u+Du\cdot\nabla\mu+h.
\end{equation} 
In the energy framework (that is in Sobolev spaces $H^s$
or in Besov spaces $B^{d/p_2}_{p_2,1}$ with $p_2=2$), this is quite 
standard. At the same time,  if $p_2\not= 2,$ 
estimating $\nabla Q$ in $B^{d/p_2-1}_{p_2,1}$ requires some low order information in $L^{p_2}$ 
over $\nabla Q.$  Thanks to suitable functional embedding, we shall see that
if $p_2\geq2$ then it suffices to bound $\nabla Q$ in $L^2,$
an information which readily stems from the standard $L^2$ elliptic theory. 
As a consequence,  we will have  to restrict our attention 
to a functional framework which ensures that $L$ belongs to $L^2.$
This will induce us to make further assumptions on $p_1$ and 
$p_2$ (compared to (\ref{cond:global,p}) in Theorem \ref{th:global}) so as to ensure
in particular that $\nabla Q$ is in $L^2.$ Consequently,  
 the homogeneous critical framework is no longer appropriate
since some additional control will be required over the low frequencies of the solution.

More precisely, we shall prove the existence of a solution in 
 the following   nonhomogeneous space $F_T^{p_1,p_2}$:
$$
\left(\tilde C_T({B}_{p_1,1}^{d/{p_1}})\cap
 L^1_T({B}_{p_1,1}^{d/{p_1}+2})\right)\times
\left(\tilde C_T({B}_{p_2,1}^{d/{p_2}-1})\cap
L^1_T({B}_{p_2,1}^{d/{p_2}+1})\right)^d
\times\left(L^1_T({B}_{p_2,1}^{d/{p_2}-1}\cap L^2)\right)^d,
$$
which are critical in terms of regularity but more demanding concerning
the behavior at infinity. 
\smallbreak
In what follows,  we denote
\begin{align*}
&\|\theta\|_{X^{p_1}(T)}=\|\theta\|_{\tilde L_T^\infty({B}_{p_1,1}^{d/{p_1}})}
+\|\theta\|_{ L_T^1({B}_{p_1,1}^{d/{p_1}+2})},\\
&\|u\|_{Y^{p_2}(T)}=\|u\|_{\tilde  L_T^\infty({B}_{p_2,1}^{d/{p_2}-1})}
+\|u\|_{L_T^1({B}_{p_2,1}^{d/{p_2}+1})},\\
&\|\nabla Q\|_{Z^{p_2}(T)}=\|\nabla Q\|_{L_T^1({B}_{p_2,1}^{d/{p_2}-1}\cap L^2)}.
\end{align*}
Let us state our main local-in-time existence result for the fully nonhomogeneous case.
\begin{thm}\label{th:local}
 Let $(p_1,p_2)$ satisfy 
  \begin{equation}\label{cond:local,p,r}
  1<p_1\leq 4,\quad 2\leq p_2\leq 4, \quad
   \frac{1}{p_2}+\frac{1}{d}>\frac{1}{p_1}
   \end{equation}
   with in addition 
    \begin{equation*}
  p_1<4  \ \hbox{ if }\  d=2,\quad\hbox{and}\quad
   \frac{1}{p_1}+\frac{1}{d}> \frac{1}{p_2}\ \hbox{ if  }\ d\geq 3.
                   \end{equation*}

For any  initial temperature $\vartheta_0=1+\theta_0$ and velocity field $u_0$ which satisfy
\begin{equation}\label{cond:local initial}\begin{array}{c}
0<m\leq \vartheta_0,\quad\div u_0=0\quad
\hbox{and}\quad \|\theta_0\|_{{B}^{d/{p_1}}_{p_1,1}}+\|u_0\|_{{B}^{d/{p_2}-1}_{p_2,1}}\leq M,
\end{array}\end{equation}
for  some positive constants $m,$ $M,$ 
  there exists a positive time $T$ depending only on $m$, $M,$ $p_1,$ $p_2,$ $d,$   and on the parameters
  of the system such that \eqref{equation:local} has a unique solution
$(\vartheta,u,\nabla Q)$ with $(\theta,u,\nabla Q)$ in $F_T^{p_1,p_2}.$ Furthermore, 
for some constant $C=C(d,p_1,p_2),$ we have 
\begin{equation}\label{estimate:local}
m\leq \vartheta\quad\hbox{and}\quad
\|\theta\|_{X^{p_1}(T)}+\|u\|_{Y^{p_2}(T)}+\|\nabla Q\|_{Z^{p_2}(T)}\leq CM,
\end{equation}
and  the flow map  $(\theta_0,u_0)\mapsto(\theta,u,\nabla Q)$ is Lipschitz continuous.
\end{thm}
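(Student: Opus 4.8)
The plan is to construct the solution as the limit of a sequence of linear approximations, the uniform bounds being valid on a time interval $[0,T]$ whose length depends only on $m$, $M$, $d$, $p_1$, $p_2$ and the parameters of the system. Starting from $(\theta^0,u^0,\nabla Q^0)=(0,0,0)$ and writing $\vartheta^n:=1+\theta^n$, $\kappa^n:=\kappa(\vartheta^n)$, $\mu^n:=\mu(\vartheta^n)$, I would define $(\theta^{n+1},u^{n+1},\nabla Q^{n+1})$ — with data $(\theta_0,u_0)$, possibly regularised by a low-frequency cut-off — as the solution of the linear system
\begin{equation*}
\left\{
\begin{array}{l}
\d_t\theta^{n+1}+u^n\cdot\nabla\theta^{n+1}-\div(\kappa^n\nabla\theta^{n+1})=f(\vartheta^n),\\[1ex]
\d_t u^{n+1}+u^n\cdot\nabla u^{n+1}-\div(\mu^n\nabla u^{n+1})+\vartheta^n\nabla Q^{n+1}=h(\vartheta^n,u^n),\\[1ex]
\div u^{n+1}=0.
\end{array}
\right.
\end{equation*}
Each step is solved in the natural order: first integrate the scalar parabolic equation for $\theta^{n+1}$; then determine $\nabla Q^{n+1}$ from the elliptic relation \eqref{equation:Q} (with $\vartheta^n$, $u^n$ in place of $\vartheta$, $u$), which is exactly the solvability condition for $\div u^{n+1}=0$; finally solve the evolutionary Stokes system for $u^{n+1}$.

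The core of the proof consists in a priori estimates for the three linear building blocks, with constants depending only on $m$, on $\|\theta\|_{\tilde L^\infty_T(B^{d/p_1}_{p_1,1})}$, and degenerating at worst mildly as $T\to0$. For the transport--diffusion equation $\d_t\theta+w\cdot\nabla\theta-\div(\kappa\nabla\theta)=g$ I would prove an estimate in $\tilde C_T(B^{d/p_1}_{p_1,1})\cap L^1_T(B^{d/p_1+2}_{p_1,1})$; since $\theta_0$ is \emph{not} small, $\kappa(\vartheta)$ is not a small perturbation of a constant, so I would split $\kappa(\vartheta)-\kappa(1)$ into a spectrally truncated part $S_N(\kappa(\vartheta)-\kappa(1))$ and a high-frequency remainder, choosing $N$ so that the remainder is small in $B^0_{\infty,1}$ and absorbing the smooth truncated part by Littlewood--Paley localisation, a commutator argument and the shortness of $T$; the lower bound $\vartheta\geq m/2$ on $[0,T]$ — which keeps the operator uniformly elliptic — is propagated from $\vartheta_0\geq m$ because $\|\theta(t)-\theta_0\|_{L^\infty}\to0$ as $T\to0$, uniformly over the iterates. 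The same machinery, once the pressure is under control, yields an estimate for the Stokes system $\d_t u+w\cdot\nabla u-\div(\mu\nabla u)+\vartheta\nabla Q=G$, $\div u=0$, in $\tilde C_T(B^{d/p_2-1}_{p_2,1})\cap L^1_T(B^{d/p_2+1}_{p_2,1})$.

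I expect the pressure estimate to be the main obstacle, and it is what dictates the hypotheses \eqref{cond:local,p,r}. From \eqref{equation:Q}, $\nabla Q$ solves $\div(\vartheta\nabla Q)=\div L$ with $L=-u\cdot\nabla u+Du\cdot\nabla\mu+h(\vartheta,u)$ and $\vartheta\geq m/2$. Because the data are large, $\Id+\nabla(-\Delta)^{-1}\div(\theta\,\cdot)$ cannot be inverted by a Neumann series, so I would invoke genuine $L^2$ elliptic theory: the Lax--Milgram lemma gives $\|\nabla Q\|_{L^2}\lesssim_m\|L\|_{L^2}$, and the whole point of the conditions $2\leq p_2\leq4$, $1<p_1\leq4$, $\frac1{p_2}+\frac1d>\frac1{p_1}$ (together with the dimension-dependent refinements) is precisely to ensure that $L\in L^1_T(L^2)$ in the functional framework at hand. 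To upgrade this to a bound of $\nabla Q$ in $L^1_T(B^{d/p_2-1}_{p_2,1}\cap L^2)$ I would again split $\vartheta$ into a smooth spectrally truncated coefficient plus a high-frequency remainder that is small in $B^0_{\infty,1}$, treat the remainder term perturbatively (absorbing it into the left-hand side), and for the smooth coefficient recover the Besov bound by dyadic localisation, commutator estimates and interpolation with the $L^2$ control.

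Finally, combining the above, for $T$ small the sequence $(\theta^n,u^n,\nabla Q^n)$ is bounded in $F_T^{p_1,p_2}$ with $\vartheta^n\geq m/2$. Applying the linear estimates to the differences $(\theta^{n+1}-\theta^n,\,u^{n+1}-u^n,\,\nabla Q^{n+1}-\nabla Q^n)$, together with product and composition estimates and the uniform bounds, I would show that these differences tend to $0$ geometrically in the space with one less derivative; hence the scheme converges, and its limit $(\theta,u,\nabla Q)$ solves \eqref{equation:local}, lies in $F_T^{p_1,p_2}$ with $\|\theta\|_{X^{p_1}(T)}+\|u\|_{Y^{p_2}(T)}+\|\nabla Q\|_{Z^{p_2}(T)}\leq CM$, and satisfies $\vartheta\geq m$ after possibly shrinking $T$. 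Uniqueness follows from the same difference estimates in the lower-regularity norm and Gronwall's inequality (with the usual logarithmic interpolation when $p_1\neq p_2$), and Lipschitz continuity of the flow map is obtained by estimating, in that same lower-regularity space, the difference of two solutions issued from two data sets, the constants depending only on the already controlled norms.
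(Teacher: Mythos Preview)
Your outline captures the right architecture---iterative scheme, variable-coefficient parabolic/Stokes estimates, and the $L^2$ elliptic step for $\nabla Q$ with spectral truncation of the coefficient---but there is one missing ingredient that the paper relies on and without which the uniform bounds do not close for \emph{large} data: the subtraction of the free solution.

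Concretely, the paper introduces $(\theta_L,u_L)$ solving $\d_t\theta_L-\bar\kappa\Delta\theta_L=0$, $\d_t u_L-\bar\mu\Delta u_L=0$ with data $(\theta_0,u_0)$, and proves smallness of $(\bar\theta^n,\bar u^n,\nabla\bar Q^n):=(\theta^n-\theta_L^n,u^n-u_L^n,\nabla Q^n)$ rather than boundedness of the full iterates. The point is that the source terms in the equation for $(\bar\theta^{n+1},\bar u^{n+1})$ all carry at least one factor that is small when $T$ is small: either a bar quantity, or a time-integrated norm of $(\theta_L,u_L)$ such as $\|\theta_L\|_{L^1_T(B^{d/p_1+2}_{p_1,1})}$ or $\|u_L\|_{L^1_T(B^{d/p_2+1}_{p_2,1})}$, which tends to $0$ with $T$. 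In your scheme, by contrast, if you try to propagate $\|\theta^n\|_{X^{p_1}(T)}+\|u^n\|_{Y^{p_2}(T)}\leq 2CM$ directly, the linear estimate of Proposition~\ref{p:energy} gives
\[
\|\theta^{n+1}\|_{X^{p_1}(T)}\leq C_M\bigl(\|\theta_0\|_{B^{d/p_1}_{p_1,1}}+\|f^n\|_{L^1_T(B^{d/p_1}_{p_1,1})}\bigr),
\]
with $C_M$ containing the exponential factor, and $\|f^n\|_{L^1_T}\lesssim C_M\|\nabla\theta^n\|_{L^2_T(B^{d/p_1}_{p_1,1})}^2$. The latter is not small: by interpolation it is controlled by $\|\theta^n\|_{L^\infty_T}\cdot\|\theta^n\|_{L^1_T(B^{d/p_1+2}_{p_1,1})}$, but $\|\theta^n\|_{L^1_T(B^{d/p_1+2}_{p_1,1})}$ is itself part of $X^{p_1}(T)$ and of size $\sim M$, not $o(1)$. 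So you obtain $\|\theta^{n+1}\|_{X^{p_1}(T)}\lesssim C_M(M+M^2)$, which exceeds $2CM$ for large $M$. The same obstruction hits the velocity bound. The free-solution splitting is precisely what makes the relevant $L^1_T$ and $L^2_T$ norms small.

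Two further differences are worth noting, though they are choices rather than gaps. First, the paper shows the iterates form a Cauchy sequence \emph{in the critical space $F_T^{p_1,p_2}$ itself}, exploiting that the initial data of the differences are $(\Delta_{N_0+n}\theta_0,\Delta_{N_0+n}u_0)$, whose norms are summable; your proposal to pass to a weaker space and then recover regularity via Fatou-type arguments is standard but adds a step. Second, the lower bound $\vartheta\geq m$ follows in the paper from the maximum principle for the nonlinear parabolic equation satisfied by $\vartheta$ (the right-hand side $f(\vartheta)=-2\alpha k|\nabla\vartheta|^2$ vanishes at interior minima), which yields $\vartheta\geq m$ on the whole interval without shrinking $T$; your continuity argument gives only $\vartheta\geq m/2$ on a possibly smaller interval.
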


\begin{rem} \label{remark:original solution}
The above theorems \ref{th:global} \ref{th:local} and the transformation \eqref{eq:newvelocity}   ensure that the original system \eqref{eq:original Mach limit}
is  well-posed. More precisely, in the case $1\leq p_1=p_2<2d$
 for the initial data $(\vartheta_0,v_0)$  satisfying the third equation,   and $(\vartheta_0-1,v_0)$ in $\dot B^{d/{p_1}}_{p_1,1}\times \dot B^{d/{p_1}-1}_{p_1,1}$
 with \eqref{cond:local},   we  get a  local solution $(\vartheta,v,\nabla \Pi)$ 
 of \eqref{eq:original Mach limit} such that $(\theta,u,\nabla Q)\in \dot F^{p_1,p_1}_T$
 and the solution is global if \eqref{cond:global} holds. 
 Under the same regularity assumptions with in addition $2\leq p_1\leq 4$
 then if  $\vartheta_0$ is just  bounded from below, we get a local solution  
  in $F^{p_1,p_1}_T.$ In the case   
  $p_1\not=p_2,$  a similar result holds true. It is more complicated to state, though. 
  \end{rem}
  
  Let us end this section with a few comments and a short list of open 
questions that we plan to address in the future. 
\begin{itemize}
\item To simplify the presentation, we restricted to the \emph{free} evolution 
of a solution to \eqref{equation:local}. 
As in e.g. \cite{Danchin01local,Danchin03}, our methods enable us 
to treat the case where the fluid is subject to some external body force.
\item We expect similar results for  equations of state
such as those that have been considered by  Alazard in the Appendix of
\cite{Alazard06b} or, more generally, for reacting flows as in \cite{Embid} (as it only
introduces coupling with parabolic equations involving reactants, the scaling of which is the 
same as that of $\vartheta$). We here restricted our analysis to ideal gases for simplicity only. 
\item In the two-dimensional case, unless $k=\eta=0$ and $\zeta$ is a positive constant
(that is for the incompressible Navier-Stokes equations), the question of global existence for large data is widely open. Note however that our derivation of  \eqref{eq:original Mach limit}
highlights the important role of the parameter $\beta$ defined in \eqref{eq:beta}. 
As a matter of fact, it has been discovered very recently by the second author in \cite{Liao}
that global existence holds true in dimensional two (even for large data) if $\beta=0.$ 
\item As for the classical incompressible Euler equations, 
working in a critical functional framework is no longer
relevant in the inviscid case. However,the approach proposed
in \cite{Danchin10euler}  carries out  to our system (see the forthcoming paper \cite{FL}).   
\item Granted with the above results, it is natural 
to study the asymptotics $\varepsilon$ going to $0$    in the above functional framework. 
This would extend some of the results  of Alazard  in \cite{Alazard06}
to the case of rough data.  
\end{itemize}

The rest of the paper unfolds as follows. 
In the next section, we introduce the main tool for the proof --the Littlewood-Paley
decomposition-- and define Besov spaces and some related functional spaces. 
In passing, we state product laws in those spaces and commutator estimates.
In Section \ref{s:small}, we focus on the proof of 
our first well-posedness result
(pertaining to the case where the initial temperature is close to a constant)
whereas our second well-posedness result  is proved in Section \ref{s:large}. 
The proof of a commutator estimate in postponed in Appendix.

\medbreak\noindent{\bf Acknowledgments: } The authors are indebted to the 
  anonymous referee for his relevant remarks on the first 
version of the paper.

%%%%%%%%%%%%%%%%%%%%%%%%%%%%%%%%%%%%%%%%%%%%%%

\section{Tools}
\setcounter{equation}{0}

Let us first fix some notation. 
\begin{itemize}
\item
Throughout this paper, $C$ represents some ``harmless'' constant, which can be understood from the context. In some places, we shall alternately use  the notation $A\lesssim B$ instead of $A\leq CB$, and $A\approx B$ means $A\lesssim B$ and $B\lesssim A$.
\item
If $p\in[1,+\infty]$ then we  denote by $p'$ the \emph{conjugated} exponent of
$p$ defined by $1/p+1/p'=1.$
\item If $X$ is a Banach space, $T>0$ and $p\in[1,+\infty]$ then
$L_T^p(X)$ stands for the set of Lebesgue measurable functions $f$ from 
$[0,T)$ to $X$ such that $t\mapsto \|f(t)\|_{X}$ belongs to $L^p([0,T)).$
If $T=+\infty,$ then the space is merely denoted by $L^p(X).$
Finally, if $I$ is some interval of $\R$ then the notation $\cC(I;X)$ stands 
for the set of continuous functions from $I$ to $X.$
\item We shall keep the same notation  $X$ to designate vector-fields with components in $X.$ 
\end{itemize}

\subsection{Basic results on Besov spaces}

First of all we recall briefly the definition of the so-called nonhomogeneous
\emph{Littlewood-Paley} 
decomposition: a dyadic partition of unity with respect to the Fourier variable. More precisely, fix a smooth nonincreasing radial function $\chi$, which is supported in the ball $B(0,\frac43)$ and equals to $1$ in a neighborhood of $B(0,1)$. Set $\varphi(\xi)=\chi(\frac{\xi}{2})-\chi(\xi)$, then we have
$$
\chi(\xi)+\sum_{j\geq 0}\varphi(2^{-j}\xi)=1.
$$

Let $\varphi_j(\xi)=\varphi(2^{-j}\xi)$, $h_j=\mathcal{F}^{-1}\varphi_j$, and $\check{h}=\mathcal{F}^{-1}\chi$. The \emph{dyadic blocks} $(\dj)_{j\in \Z}$ are defined by
\begin{align*}
&\dj u=0 \quad\hbox{  if  }\quad j\leq -2,\\
&\Delta_{-1}u=\chi(D)u=\Int_{\R^d}\check{h}(y)u(x-y)\,dy,\\
&\dj u=\varphi_j(D)u=\Int_{\R^d}h_j(y)u(x-y)\,dy \quad\hbox{  if  }\quad j\geq 0,
\end{align*}
and we also introduce the low-frequency cut-off:
$$
S_j u=\sum_{k\leq j-1}\Delta_k u.$$
 Note that $S_ju=\chi(2^{-j}D)u$ if $j\geq0.$
\medbreak
As shown in e.g. \cite{BCD}, the above dyadic decomposition
satisfies 
$$ \Delta_k\dj u\equiv 0\quad \hbox{ if }\ |k-j|\geq 2
\quad\hbox{and}\quad
\Delta_k(S_{j-1}u\dj u)\equiv 0\quad \hbox{ if }\ |k-j|\geq 5.
$$
In addition, for any tempered distribution $u,$
one may write
 $$
u=\sum_{j\in \Z}\dj u,
$$
and, owing to Bersntein's inequalities (see e.g. \cite{BCD}, Chap. 2), 
$$\begin{array}{l}
\|\dj u\|_{L^{p_1}}\lesssim 2^{d(\frac{1}{p_2}-\frac{1}{p_1})}\|\dj u\|_{L^{p_2}}\quad \hbox{ if }p_1\geq p_2,\\[1ex]
\|D^k(\dj u)\|_{L^p}\lesssim 2^{kj}\|\dj u\|_{L^p},\quad\forall j\geq {-1},\\[1ex]
\|D^k(\dj u)\|_{L^p}\approx 2^{kj}\|\dj u\|_{L^p},\quad\forall j\geq {0}.
\end{array}
$$

We can now define the nonhomogeneous Besov spaces $B^s_{p,r}$  as follows:
\begin{defi}\label{def:Besov}
For $s\in \R$, $(p,r)\in [1,+\infty]^2$, and $u\in \mathcal {S}'(\R^d)$, we set
$$
\|u\|_{B^s_{p,r}}=\left(\sum_{j\geq -1}2^{jsr}\|\dj u\|_{L^p}^r\right)^{1/r}\ \hbox{if }\ 
r<\infty,
\quad\hbox{ and }\quad \|u\|_{B^s_{p,\infty}}:=\sup_{j\geq-1}\bigl\{2^{js}\|\dj u\|_{L^p}\bigr\}.
$$
We then define
$$
B^s_{p,r}=B^s_{p,r}(\R^d):=\bigl\{u\in\mathcal{S}'(\R^d),\|u\|_{B^s_{p,r}}<\infty\bigr\}.
$$
\end{defi}

Throughout, we shall use freely the following classical properties for Besov spaces.

\begin{prop}\label{p:Besov}
The following properties hold true:
\begin{enumerate}[(i)]
\item Action of derivatives: $\|\nabla u\|_{B^{s-1}_{p,r}}\lesssim \|u\|_{B^s_{p,r}}$.

 \item Embedding: $B^{s}_{p_1,r_1}\hookrightarrow B^{s-d(\frac{1}{p_1}-\frac{1}{p_2})}_{p_2,r_2}$ if $p_1\leq p_2$, $r_1\leq r_2,$
 and $B^{\frac dp}_{p,1}\hookrightarrow L^\infty$ for all $p\in[1,\infty].$

%\item Algebraic properties: for $s>0$, $B^s_{p,r}\cap L^\infty$ is an algebra.

\item Real interpolation: $(B^{s_1}_{p,r_1},B^{s_2}_{p,r_2})_{\theta,r'}=B^{(1-\theta) s_1+\theta s_2}_{p,r'}$.
    \end{enumerate}
\end{prop}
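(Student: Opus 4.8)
\textbf{Proof proposal for Proposition \ref{p:Besov}.}

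The plan is to verify each of the three properties directly from Definition \ref{def:Besov} together with the Bernstein inequalities recalled above and the spectral localization properties of the blocks $\dj$. For (i), the action of derivatives, I would fix $j\geq-1$ and apply the Bernstein estimate $\|D^k(\dj u)\|_{L^p}\lesssim 2^{kj}\|\dj u\|_{L^p}$ with $k=1$ to get $\|\dj(\nabla u)\|_{L^p}\lesssim 2^{j}\|\dj u\|_{L^p}$ (up to the harmless overlap $\dj\nabla u=\dj\nabla\widetilde\Delta_j u$ with $\widetilde\Delta_j=\sum_{|k-j|\le1}\Delta_k$, whose Fourier support is still in an annulus of size $2^j$). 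Multiplying by $2^{j(s-1)}$ gives $2^{j(s-1)}\|\dj(\nabla u)\|_{L^p}\lesssim 2^{js}\|\dj u\|_{L^p}$, and then taking the $\ell^r(\Z_{\ge-1})$ norm yields $\|\nabla u\|_{B^{s-1}_{p,r}}\lesssim\|u\|_{B^s_{p,r}}$; the case $r=\infty$ is the same with a supremum.

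For (ii), I would treat the two embeddings separately. For $B^s_{p_1,r_1}\hookrightarrow B^{s-d(1/p_1-1/p_2)}_{p_2,r_2}$ with $p_1\le p_2$ and $r_1\le r_2$: first apply the Bernstein inclusion $\|\dj u\|_{L^{p_2}}\lesssim 2^{d(1/p_1-1/p_2)j}\|\dj u\|_{L^{p_1}}$, so that with $\sigma:=s-d(1/p_1-1/p_2)$ one has $2^{j\sigma}\|\dj u\|_{L^{p_2}}\lesssim 2^{js}\|\dj u\|_{L^{p_1}}$; then use the elementary inclusion $\ell^{r_1}\hookrightarrow\ell^{r_2}$ for $r_1\le r_2$ to pass from the $\ell^{r_1}$ norm of $(2^{js}\|\dj u\|_{L^{p_1}})_j$ to the $\ell^{r_2}$ norm of $(2^{j\sigma}\|\dj u\|_{L^{p_2}})_j$. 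For $B^{d/p}_{p,1}\hookrightarrow L^\infty$: write $u=\sum_{j\ge-1}\dj u$ in $L^\infty$, bound $\|\dj u\|_{L^\infty}\lesssim 2^{dj/p}\|\dj u\|_{L^p}$ by Bernstein, and sum to get $\|u\|_{L^\infty}\le\sum_j\|\dj u\|_{L^\infty}\lesssim\sum_j 2^{dj/p}\|\dj u\|_{L^p}=\|u\|_{B^{d/p}_{p,1}}$; the convergence of the series in $L^\infty$ also shows $u$ is (a.e. equal to) a bounded continuous function.

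For (iii), real interpolation, I would identify the $K$-functional of the couple $(B^{s_1}_{p,r_1},B^{s_2}_{p,r_2})$ at a dyadic scale: splitting $u=\sum_j\dj u$ and, for fixed $t>0$, putting the low blocks ($j$ below a threshold tied to $t$) into the first space and the high blocks into the second, one obtains $K(t,u)\approx\bigl(\sum_j \min(2^{js_1},t\,2^{js_2})^{?}\cdots\bigr)$ — more precisely one shows $K(2^{-j(s_2-s_1)},u)\approx 2^{-js_1}\|\dj u\|_{L^p}$ up to summable tails, using that the two block sequences are ``almost diagonal''. Then the definition of the real interpolation norm $\|u\|_{(X_0,X_1)_{\theta,q}}=\|t^{-\theta}K(t,u)\|_{L^q(dt/t)}$ reduces, after discretizing the integral over dyadic $t$, to $\bigl(\sum_j (2^{j((1-\theta)s_1+\theta s_2)}\|\dj u\|_{L^p})^{q}\bigr)^{1/q}$, which is exactly the $B^{(1-\theta)s_1+\theta s_2}_{p,q}$ norm with $q=r'$ as stated. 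Since these are all standard facts, I would in practice simply cite \cite{BCD} (Chapters 2 and the interpolation section) for (ii)'s embeddings and (iii), and give the one-line Bernstein computation for (i) and for $B^{d/p}_{p,1}\hookrightarrow L^\infty$.

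The main obstacle is the real interpolation identity (iii): getting the $K$-functional estimate sharp in both directions requires care with the overlap of dyadic blocks and with the tails of the series, and the endpoint bookkeeping (why exactly $r'$ appears, and what happens when $s_1=s_2$) is the only genuinely delicate point — everything else is a direct unwinding of Definition \ref{def:Besov} via Bernstein's inequalities. For a paper at this level it is entirely legitimate to defer (iii) and the $\ell^{r_1}\hookrightarrow\ell^{r_2}$-based embedding to \cite{BCD} rather than reproduce the proof.
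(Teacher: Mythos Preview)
The paper does not prove this proposition at all: it is stated as a list of ``classical properties for Besov spaces'' to be ``use[d] freely'', with no proof and no explicit reference attached (though \cite{BCD} is cited nearby for Bernstein's inequalities and the Littlewood--Paley theory). Your proposal is correct and, in fact, goes further than the paper does by actually sketching the arguments; your concluding remark that ``for a paper at this level it is entirely legitimate to defer (iii) and the $\ell^{r_1}\hookrightarrow\ell^{r_2}$-based embedding to \cite{BCD}'' is exactly what the authors do.
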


When dealing with product of functions in Besov spaces, it is often 
convenient to use paradifferential calculus, a tool that has been introduced by J.-M. Bony in \cite{Bony}. Recall that the paraproduct between $u$ and $v$ is defined by
$$
T_uv=\sum_{j}S_{j-1}u\,\dj v,
$$
and that the remainder of $u$ and $v$ is defined by
$$
R(u,v)=\sum_{j}\dj u\,\tilde\dj v\quad\hbox{with}\quad
\tilde\dj v=(\Delta_{j-1}+\Delta_j+\Delta_{j+1})v.
$$
Then we have the following so-called Bony's decomposition for the product between $u$ and $v$:
\begin{equation}\label{eq:bony}
uv=T_uv+R(u,v)+T_vu=T'_uv+T_vu.
\end{equation}
We shall often use the following  estimates in Besov spaces
for the paraproduct and remainder operators.
\begin{prop}\label{p:paraproduct}
Let  $1\leq r,r_1,r_2,p,p_1,p_2\leq\infty$  with 
 $\frac{1}{r}\leq\min\{1,\frac{1}{r_1}+\frac{1}{r_2}\}$
 and  $\frac 1p\leq \frac1{p_1}+\frac1{p_2}\cdotp$
\begin{itemize}
\item 
If $p\leq p_2$ then we have:
\begin{eqnarray}\label{est:product Besov,paraproduct1}
&\|T_u v\|_{B^{s_1\!+\!s_2\!+\!d(\frac{1}{p}-\frac{1}{p_1}\!-\!\frac{1}{p_2})}_{p,r}}\lesssim
\|u\|_{B^{s_1}_{p_1,r_1}}\|v\|_{B^{s_2}_{p_2,r_2}}
\ \textrm{ if  }\ s_1<d\bigl(\frac{1}{p_1}+\frac 1{p_2}-\frac1p\bigr),\\\label{est:product Besov,paraproduct2}
&\|T_u v\|_{B^{s_2}_{p,r}}\lesssim
\|u\|_{L^{p_3}}\|v\|_{B^{s_2}_{p_2,r}}\, \textrm{ if }\, \frac1{p_3}=\frac1p-\frac1{p_2}\cdotp
\end{eqnarray}
\item If $s_1+s_2+d\min\{0,1-\frac{1}{p_1}-\frac{1}{p_2}\}>0$, then
\begin{equation}\label{est:product Besov,remainder}
\|R(u,v)\|_{B^{s_1+s_2+d(\frac{1}{p}-\frac{1}{p_1}-\frac{1}{p_2})}_{p,r}}\lesssim \|u\|_{B^{s_1}_{p_1,r_1}}\|v\|_{B^{s_2}_{p_2,r_2}};
\end{equation}
\item if $s_1+s_2+d\min\{0,1-\frac{1}{p_1}-\frac{1}{p_2}\}=0$ and 
$\frac1{r_1}+\frac1{r_2}\geq1$ then
\begin{equation}\label{est:product Besov,remainder,limite}
\|R(u,v)\|_{B^{s_1+s_2+d(\frac{1}{p}-\frac{1}{p_1}-\frac{1}{p_2})}_{p,\infty}}\lesssim \|u\|_{B^{s_1}_{p_1,r_1}}\|v\|_{B^{s_2}_{p_2,r_2}}.
\end{equation}
\end{itemize}
\end{prop}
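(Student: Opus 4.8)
The plan is to reduce every estimate to a discrete inequality on the sequences $(c_j)_j:=\bigl(2^{js_1}\|\dj u\|_{L^{p_1}}\bigr)_j\in\ell^{r_1}$ and $(e_j)_j:=\bigl(2^{js_2}\|\dj v\|_{L^{p_2}}\bigr)_j\in\ell^{r_2}$ (of norms $\|u\|_{B^{s_1}_{p_1,r_1}}$ and $\|v\|_{B^{s_2}_{p_2,r_2}}$), using only three ingredients: the spectral localization of the Littlewood--Paley blocks recalled above ($\dk(S_{j-1}a\,\dj b)\equiv0$ for $|k-j|\geq5$, and $\dk(\dj a\,\tilde\dj b)\equiv0$ once $2^k\gg2^j$), Bernstein's inequalities to trade integrability for powers of $2^j$, and discrete Young/H\"older inequalities on sequences --- this is where the assumptions $1/r\leq\min\{1,1/r_1+1/r_2\}$ and $1/p\leq1/p_1+1/p_2$ enter.

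For the paraproduct I would write $\dk(T_uv)=\sum_{|j-k|\leq4}\dk(S_{j-1}u\,\dj v)$ and bound, by H\"older, $\|\dk(T_uv)\|_{L^p}\lesssim\sum_{|j-k|\leq4}\|S_{j-1}u\|_{L^{p_4}}\|\dj v\|_{L^{p_2}}$ with $1/p_4:=1/p-1/p_2$ (the hypothesis $p\leq p_2$ makes $p_4\geq1$, and $1/p\leq1/p_1+1/p_2$ makes $p_4\geq p_1$). For \eqref{est:product Besov,paraproduct1} I would then estimate $\|S_{j-1}u\|_{L^{p_4}}\leq\sum_{j'\leq j-2}\|\Delta_{j'}u\|_{L^{p_4}}\lesssim\sum_{j'\leq j-2}2^{j'd(1/p_1-1/p_4)}\|\Delta_{j'}u\|_{L^{p_1}}$ by Bernstein; since the exponent $d(1/p_1-1/p_4)-s_1=d(1/p_1+1/p_2-1/p)-s_1$ is \emph{strictly positive} by assumption, this series is a convolution of $(c_{j'})\in\ell^{r_1}$ with a summable sequence, hence $\lesssim 2^{j(d(1/p_1+1/p_2-1/p)-s_1)}\tilde c_j\|u\|_{B^{s_1}_{p_1,r_1}}$ with $(\tilde c_j)\in\ell^{r_1}$. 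Inserting this, multiplying by $2^{k(s_1+s_2+d(1/p-1/p_1-1/p_2))}$ and taking the $\ell^r$-norm over the finite band $|j-k|\leq4$ (where convolution with a finitely supported sequence followed by H\"older suffices) gives \eqref{est:product Besov,paraproduct1}. Estimate \eqref{est:product Besov,paraproduct2} is the same computation, except that $\|S_{j-1}u\|_{L^{p_3}}$ is bounded directly by $\lesssim\|u\|_{L^{p_3}}$ ($S_{j-1}$ being uniformly $L^q$-bounded), so no decomposition of $u$ and no condition on $s_1$ are needed.

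For the remainder the key point is that $\dj u\,\tilde\dj v$ has Fourier support in a \emph{ball} of radius $\sim2^j$, so $\dk R(u,v)=\sum_{j\geq k-N_0}\dk(\dj u\,\tilde\dj v)$ for some absolute $N_0$. I would bound $\|\dj u\,\tilde\dj v\|_{L^m}$ by H\"older with $1/m=1/p_1+1/p_2$ when $1/p_1+1/p_2\leq1$, and by $\|\dj u\,\tilde\dj v\|_{L^1}\leq\|\dj u\|_{L^{p_1}}\|\tilde\dj v\|_{L^{p_1'}}\lesssim2^{jd(1/p_1+1/p_2-1)}\|\dj u\|_{L^{p_1}}\|\tilde\dj v\|_{L^{p_2}}$ (Bernstein on $\tilde\dj v$) when $1/p_1+1/p_2>1$; a further application of Bernstein to $\dk(\cdot)$ then supplies the power of $2^k$, and after reconciling the two regimes both collapse into
$$2^{k(s_1+s_2+d(1/p-1/p_1-1/p_2))}\|\dk R(u,v)\|_{L^p}\lesssim\sum_{j\geq k-N_0}2^{(k-j)\sigma}\,c_je_j\,\|u\|_{B^{s_1}_{p_1,r_1}}\|v\|_{B^{s_2}_{p_2,r_2}},$$
with $\sigma:=s_1+s_2+d\min\{0,1-1/p_1-1/p_2\}$. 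If $\sigma>0$ the right-hand side is a convolution of $(c_je_j)$ with the summable sequence $(2^{m\sigma}\mathbf{1}_{m\leq N_0})$, and discrete Young plus H\"older give \eqref{est:product Besov,remainder}; if $\sigma=0$ it reduces to $\sum_{j\geq k-N_0}c_je_j$, which is bounded uniformly in $k$ precisely when $(c_je_j)\in\ell^1$, i.e. when $1/r_1+1/r_2\geq1$, whence \eqref{est:product Besov,remainder,limite}.

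I expect the only genuinely delicate point to be the bookkeeping in the remainder estimate: checking that in both regimes $1/p_1+1/p_2\lessgtr1$ the exponent of $2^k$ comes out to be exactly $d(1/p-1/p_1-1/p_2)$, and tracking which $\ell^\rho$-index each sequence lands in so that, under the stated constraints on $r,r_1,r_2$, the final sequence is indeed in $\ell^r$ (resp. $\ell^\infty$). This is also where the sharp conditions $s_1+s_2+d\min\{0,1-1/p_1-1/p_2\}>0$ (resp. $=0$ together with $1/r_1+1/r_2\geq1$) are seen to be exactly what makes the series converge. All of this is classical and can alternatively be quoted from \cite{BCD}, Chapter~2.
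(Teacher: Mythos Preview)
Your proposal is correct and follows essentially the same approach as the paper: the same H\"older--Bernstein--convolution argument for the paraproduct (with your $p_4$ being the paper's $p_3$), and the same treatment of the limiting case \eqref{est:product Besov,paraproduct2} via the uniform $L^{p_3}$-boundedness of $S_{j-1}$. In fact the paper only writes out the paraproduct part and leaves the remainder estimates \eqref{est:product Besov,remainder}--\eqref{est:product Besov,remainder,limite} to the reader, so your sketch of the two regimes $1/p_1+1/p_2\lessgtr1$ is more detailed than what appears there.
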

\begin{proof}
Most of these results are classical (see e.g. \cite{BCD}).
We just prove  \eqref{est:product Besov,paraproduct1} and \eqref{est:product Besov,paraproduct2}.
which is a slight generalization of  Prop. 2.3 in  \cite{AbidiP07}. 
We write that 
$$
T_uv=\sum_{j\geq1} T_j(u,v)\quad\hbox{with }\ T_j(u,v)=S_{j-1}u\dj v.
$$
Since  $\Delta_{j'}(S_{j-1}u\dj v)\equiv0$ for $|j'-j|>4,$
 it suffices to show that, for some sequence $(c_j)_{j\in\N}$
such that $\|(c_j)\|_{\ell^r}=1,$ we have
$$
\begin{array}{lll}
\|T_j(u,v)\|_{L^p} \lesssim c_j2^{-js}
\|u\|_{B^{s_1}_{p_1,r_1}}\|v\|_{B^{s_2}_{p_2,r_2}}&\hbox{if}&
s_1<d/p_1+d/p_2-d/p,\\[1ex]
\|T_j(u,v)\|_{L^p} \lesssim c_j2^{-js}
\|u\|_{L^{p_3}}\|v\|_{B^{s_2}_{p_2,r_2}}&\hbox{if}&
s_1=d/p_1+d/p_2-d/p,
\end{array}
$$
with  $s=s_1+s_2+\frac d{p}-\frac d{p_1}-\frac d{p_2}.$
\medbreak
According to H\"older's inequality, we have
\begin{equation}\label{eq:limite}
\|T_j(u,v)\|_{L^p}\leq \|S_{j-1}u\|_{L^{p_3}}\|\dj v\|_{L^{p_2}}
\quad\hbox{with }\ \frac1{p_3}=\frac1p-\frac1{p_2}\cdotp
\end{equation}
Hence, using the definition of $S_{j-1}$ and Bernstein's inequality (here
we notice that  $p_1\leq p_3,$ a consequence of $\frac1{p}\leq\frac1{p_1}+\frac1{p_2}$):
$$
\|T_j(u,v)\|_{L^p}\lesssim \sum_{j'\leq j-2} 2^{j'(\frac d{p_1}-\frac d{p_3})}
 \|\Delta_{j'}u\|_{L^{p_1}}\|\dj v\|_{L^{p_2}},
$$
whence
$$
2^{js}\|T_j(u,v)\|_{L^p}\leq  \sum_{j'\leq j-2} 2^{(j-j')(s_1+\frac d{p_3}-\frac d{p_1})}
\bigl(2^{j's_1}\|\Delta_{j'}u\|_{L^{p_1}}\bigr)
\bigl(2^{js_2}\|\dj v\|_{L^{p_2}}\bigr).
$$
Therefore,  if $s_1+d/p_3-d/p_1<0$ then
the result stems from convolution and H\"older inequalities for series.
In the case where $s_1+d/p_3-d/p_1=0,$ we just have to use
that  $\|S_{j-1}u\|_{L^{p_3}}\leq C\|u\|_{L^{p_3}}$ in \eqref{eq:limite}. 
\smallbreak
The proof of \eqref{est:product Besov,remainder}
goes from similar arguments and is thus left to the reader (see also \cite{AbidiP07}).
\end{proof}

{}From the above proposition and \eqref{eq:bony}, one may deduce a number of
estimates  in Besov spaces for the product of two functions.
We shall use the following result:
\begin{prop}\label{prop:product Besov}
The following estimates hold true:
\begin{enumerate}[(i)]
\item $\|uv\|_{B^s_{p,r}}\lesssim \|u\|_{L^\infty}\|v\|_{B^s_{p,r}}+\|u\|_{B^s_{p,r}}\|v\|_{L^\infty}$ if $s>0$.
\item If $s_1<\frac{d}{p_1}$, $s_2<d\min\{\frac{1}{p_1},\frac{1}{p_2}\}$, 
$s_1+s_2+d\min\{0,1-\frac{1}{p_1}-\frac{1}{p_2}\}>0$ and $\frac1r\leq
\min\bigl\{1,\frac1{r_1}+\frac1{r_2}\bigr\}$  then
\begin{equation}\label{est:product Besov1}
\|uv\|_{B^{s_1+s_2-\frac{d}{p_1}}_{p_2,r}}\lesssim \|u\|_{B^{s_1}_{p_1,r_1}}\|v\|_{B^{s_2}_{p_2,r_2}}.
\end{equation}
\item We also have the following  limit cases:
\begin{itemize}
\item if $s_1=d/p_1,$ $s_2<\min(d/p_1,d/p_2)$ 
and $s_2+d\min(1/p_1,1/p'_2)>0$  then 
\begin{equation}\label{est:product Besov2}
\|uv\|_{B^{s_2}_{p_2,r}}\lesssim \|u\|_{B^{\frac d{p_1}}_{p_1,\infty}\cap L^\infty}\|v\|_{B^{s_2}_{p_2,r}};
\end{equation}
\item if $s_2=\min(d/p_1,d/p_2),$ $s_1<d/p_1$ and $s_1+s_2+d\min\{0,1-\frac{1}{p_1}-\frac{1}{p_2}\}>0$  then
$$\|uv\|_{B^{s_1+s_2-\frac d{p_1}}_{p_2,r}}\lesssim \|u\|_{B^{s_1}_{p_1,r}}\|v\|_{B^{s_2}_{p_2,1}};
$$
\item  if $1/r_1+1/r_2\geq1,$ $s_1<\frac d{p_1},$ $s_2<d\min\{\frac1{p_1},\frac1{p_2}\}$
and $s_1+s_2+d\min(0,1-\frac1{p_1}-\frac1{p_2})=0$ then
$$\|uv\|_{B^{s_1+s_2-\frac d{p_1}}_{p_2,\infty}}\lesssim \|u\|_{B^{s_1}_{p_1,r_1}}\|v\|_{B^{s_2}_{p_2,r_2}}.
$$
\end{itemize}
\end{enumerate}
\end{prop}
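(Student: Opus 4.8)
The plan is to derive all four product estimates from Bony's decomposition \eqref{eq:bony}, namely $uv=T_uv+T_vu+R(u,v)$, and to bound each of the three pieces using Proposition \ref{p:paraproduct}. For item (i), since $s>0$ and both factors are in $L^\infty$, I would estimate $T_uv$ by \eqref{est:product Besov,paraproduct2} with $p_1=p_2=p$ (so $p_3=\infty$), giving $\|T_uv\|_{B^s_{p,r}}\lesssim\|u\|_{L^\infty}\|v\|_{B^s_{p,r}}$; symmetrically $\|T_vu\|_{B^s_{p,r}}\lesssim\|v\|_{L^\infty}\|u\|_{B^s_{p,r}}$; and for the remainder I would first embed $L^\infty\hookrightarrow B^0_{\infty,\infty}$, then apply \eqref{est:product Besov,remainder} with $s_1=s>0$, $s_2=0$, $p_1=p$, $p_2=\infty$ (so the condition $s_1+s_2+d\min\{0,1-1/p_1\}>0$ reduces to $s>0$), which yields $\|R(u,v)\|_{B^s_{p,r}}\lesssim\|u\|_{B^s_{p,r}}\|v\|_{L^\infty}$. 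Summing the three contributions gives (i).

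For item (ii), the target index is $s:=s_1+s_2-d/p_1$, which is exactly the endpoint index $s_1+s_2+d(1/p_2-1/p_1-1/p_2)$ appearing in Proposition \ref{p:paraproduct} with the output Lebesgue exponent $p_2$. I would bound $T_uv$ by \eqref{est:product Besov,paraproduct1} with $(p,p_1,p_2)\rightsquigarrow(p_2,p_1,p_2)$: the hypothesis there, $s_1<d(1/p_1+1/p_2-1/p_2)=d/p_1$, is precisely our assumption $s_1<d/p_1$, and $p=p_2\leq p_2$ holds. For $T_vu$ I would apply \eqref{est:product Besov,paraproduct1} with the roles of the two factors swapped and $(p,p_1,p_2)\rightsquigarrow(p_2,p_2,p_1)$; the condition becomes $s_2<d(1/p_2+1/p_1-1/p_2)=d/p_1$, which is implied by $s_2<d\min\{1/p_1,1/p_2\}$. (One must check $p=p_2\le p_1$ is \emph{not} needed here because the constraint in \eqref{est:product Besov,paraproduct1} is only $p\le p_2^{\text{(there)}}=p_1$; if $p_2>p_1$ one instead invokes the general paraproduct bound together with the embedding $B^{s_2}_{p_2,r_2}\hookrightarrow B^{s_2-d(1/p_2-1/\bar p)}_{\bar p,r_2}$, or simply notes $s_2<d/p_2$ suffices to run the estimate with output exponent $p_2$ directly — this bookkeeping is the one slightly delicate point.) The remainder is handled by \eqref{est:product Besov,remainder} with $(p,p_1,p_2)\rightsquigarrow(p_2,p_1,p_2)$, whose hypothesis $s_1+s_2+d\min\{0,1-1/p_1-1/p_2\}>0$ is assumed verbatim. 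The condition $1/r\le\min\{1,1/r_1+1/r_2\}$ is what makes the $\ell^r$ summation of the paraproduct pieces and the remainder legitimate.

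For item (iii), the three limit cases are obtained by the same splitting but at an endpoint where one of the strict inequalities in Proposition \ref{p:paraproduct} becomes an equality. In the first case $s_1=d/p_1$: here $T_uv$ is estimated by \eqref{est:product Besov,paraproduct2} (with $p_3$ chosen so that $1/p_3=1/p_2-1/p_2=0$, i.e. using the $L^\infty$ norm of $u$), the paraproduct $T_vu$ by \eqref{est:product Besov,paraproduct1} since $s_2<\min(d/p_1,d/p_2)\le d/p_1$ is still strict, and the remainder by \eqref{est:product Besov,remainder} using $\|u\|_{B^{d/p_1}_{p_1,\infty}}$ — here one uses that $\ell^1*\ell^\infty\subset\ell^\infty$, hence no loss in the third index, and the constraint $s_2+d\min(1/p_1,1/p'_2)>0$ is exactly what keeps the remainder sum convergent when $r_1=\infty$. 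The second case $s_2=\min(d/p_1,d/p_2)$ is dual: now it is $T_uv$ that sits at the paraproduct endpoint, so one uses $v\in B^{s_2}_{p_2,1}$ (third index $1$) to absorb the borderline summation via \eqref{est:product Besov,paraproduct1}–type reasoning at equality, while $T_vu$ and $R$ stay in the non-endpoint regime. The third case is the remainder-endpoint $s_1+s_2+d\min(0,1-1/p_1-1/p_2)=0$ with $1/r_1+1/r_2\ge1$, where one invokes \eqref{est:product Besov,remainder,limite} for $R(u,v)$, landing in $B^{\,\cdot\,}_{p_2,\infty}$, and the two paraproducts remain strictly subcritical and hence land in a better space that embeds into the target $B^{\,\cdot\,}_{p_2,\infty}$.

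I expect the main obstacle to be purely bookkeeping rather than conceptual: namely tracking, in each of the five sub-statements, \emph{which} of the three Bony pieces is at its endpoint and therefore forces the appearance of an $L^\infty$ norm, a third index $\infty$, or a third index $1$ hypothesis; and checking that the non-endpoint pieces genuinely land in a space embedding into the claimed target (this requires the embedding $B^{s'}_{p_2,r'}\hookrightarrow B^{s''}_{p_2,r''}$ for $s'>s''$, any $r',r''$, from Proposition \ref{p:Besov}(ii)). There is also a minor subtlety in item (ii) when $p_2>p_1$, where the direct form of \eqref{est:product Besov,paraproduct1} for $T_vu$ needs the side condition $p\le p_2^{\text{(there)}}$ to be read correctly; this is handled either by the general statement of Proposition \ref{p:paraproduct} or by an elementary dyadic re-summation, and I would state it as such without grinding through the details. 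Since all of this is routine given Propositions \ref{p:Besov} and \ref{p:paraproduct}, I would present item (i) in full and then indicate that items (ii) and (iii) follow by the same three-term decomposition, detailing only the choice of exponents and the endpoint piece in each case.
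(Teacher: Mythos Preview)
Your proposal is correct and follows exactly the approach the paper intends: the paper does not give a detailed proof of this proposition but simply states that it follows from Bony's decomposition \eqref{eq:bony} and Proposition~\ref{p:paraproduct}, which is precisely what you carry out. One small slip: in the second limit case of item~(iii), where $s_2=\min(d/p_1,d/p_2)$, it is $T_vu$ (not $T_uv$) that sits at the paraproduct endpoint, since the strict condition for $T_vu$ was $s_2<d\min\{1/p_1,1/p_2\}$ while that for $T_uv$ was $s_1<d/p_1$; the remedy---using the third index $1$ on $v$ together with the embedding $B^{s_2}_{p_2,1}\hookrightarrow L^{p_3}$ (or $L^\infty$ when $s_2=d/p_2$)---is nonetheless exactly as you describe.
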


The following commutator estimates  (see the proof in Appendix) will be also needed:
\begin{prop}\label{prop:commutator Besov}
Let $\frac{1}{r}=\min\{1,\frac{1}{r_1}+\frac{1}{r_2}\}$ and
$(s,\nu)\in \R\times\R$ satisfying 
\begin{equation}\label{eq:condcom}
-d\min\Bigl\{\frac{1}{p'_1},\frac{1}{p_2}\Bigr\}<s<\nu+d\min\Bigl\{\frac{1}{p_1},\frac{1}{p_2}\Bigr\}\quad\hbox{and}\quad
-d\min\Bigl\{\frac{1}{p_1},\frac{1}{p_2}\Bigr\}<\nu<1.
\end{equation}
For $j\geq-1,$ denote $R_j(u,v):=[u,\dj]v.$ 
We have
\begin{equation}\label{est:com1}
\|\left( 2^{js}\|R_j(u,v)\|_{L^{p_1}} \right)_{j\geq -1}\|_{\ell^r}\lesssim \|\nabla u\|_{B^{\frac{d}{p_2}+\nu-1}_{p_2,r_2}}\|v\|_{B^{s-\nu}_{p_1,r_1}}.
\end{equation}

The following limit cases also hold true:
\begin{itemize}
\item if $s=\nu+d\min\{\frac{1}{p_1},\frac{1}{p_2}\},$ $r_1=1$ and $r_2=r$
 then we have
\begin{equation}\label{est:com2}
\|\left( 2^{js}\|R_j(u,v)\|_{L^{p_1}} \right)_{j\geq -1}\|_{\ell^r}\lesssim 
 \|\nabla u\|_{B^{\frac{d}{p_2}+\nu-1}_{p_2,r}}\|v\|_{B^{s-\nu}_{p_1,1}};
 \end{equation}
 \item if $\nu=1,$ $r_1=r$ and $r_2=\infty$  then we have 
 \begin{equation}\label{est:com3}
 \|\left( 2^{js}\|R_j(u,v)\|_{L^{p_1}} \right)_{j\geq -1}\|_{\ell^r}\lesssim 
 \|\nabla u\|_{B^{\frac{d}{p_2}}_{p_2,\infty}\cap L^\infty}\|v\|_{B^{s-1}_{p_1,r}}.
\end{equation}
\end{itemize}
Finally, if in addition to \eqref{eq:condcom}, we have $\nu>1-d\min(1/p_1,1/p_2)$ then 
\begin{equation}\label{est:com4}
\|\bigl( 2^{j(s-1)}\|\d_kR_j(u,v)\|_{L^{p_1}} \bigr)_{j\geq -1}\|_{\ell^r}\lesssim \|\nabla u\|_{B^{\frac{d}{p_2}+\nu-1}_{p_2,r_2}}\|v\|_{B^{s-\nu}_{p_1,r_1}}\quad\hbox{for all }\ k\in\{1,\cdots,d\},
\end{equation}
with the above changes in the limit cases.  
\end{prop}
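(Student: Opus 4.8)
\textbf{Proof plan for Proposition \ref{prop:commutator Besov}.}

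The plan is to follow the classical Bony-decomposition strategy for commutator estimates. Writing $v=\sum_{j'}\dj[j']v$ and using that $\dj$ is a convolution with $2^{jd}h(2^j\cdot)$ where $h$ is Schwartz, one has the integral representation
\begin{equation*}
R_j(u,v)(x)=\int_{\R^d}2^{jd}h(2^j y)\bigl(u(x-y)-u(x)\bigr)v(x-y)\,dy.
\end{equation*}
Next I would insert Bony's decomposition \eqref{eq:bony} of the product $uv$ inside $\dj$ and split $R_j(u,v)$ into three pieces according to $T_uv$, $R(u,v)$ and $T_vu$. For the paraproduct piece $[u,\dj]T$-type terms one exploits the frequency localization: only dyadic blocks $\dj[j']$ with $|j'-j|\leq N_0$ contribute, and the commutator gains one derivative on $u$ thanks to the first-order Taylor expansion above, i.e. $u(x-y)-u(x)\sim -y\cdot\nabla u$, which turns the factor $2^j|y|$ in $2^{jd}h(2^jy)$ into a harmless $2^{-j}\|\nabla u\|$. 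For the remainder piece one keeps the gain of integrability coming from the product structure. Summing in $\ell^r$ with H\"older's inequality for series, one recovers the right-hand side $\|\nabla u\|_{B^{d/p_2+\nu-1}_{p_2,r_2}}\|v\|_{B^{s-\nu}_{p_1,r_1}}$; the conditions \eqref{eq:condcom} are exactly what is needed to make the geometric series in $2^{(j-j')(\cdots)}$ converge and to keep the low-frequency remainder term under control.

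The parameter $\nu$ plays the role of a freely adjustable ``splitting exponent'': the derivative hitting $u$ is counted with regularity index $d/p_2+\nu-1$ while $v$ is counted with $s-\nu$, and the two constraints in \eqref{eq:condcom} translate, on the one hand, the summability of the paraproduct series (upper bound on $s$, requiring $s-\nu<d\min(1/p_1,1/p_2)$ after the gain of one derivative, equivalently $\nu$ large enough) and, on the other hand, the summability of the remainder series together with the $B^{s}_{p_1}$-boundedness of the low frequencies (lower bounds on $s$ and on $\nu$, and $\nu<1$ so that the Taylor remainder is genuinely of higher order). The two limit cases \eqref{est:com2} and \eqref{est:com3} are handled exactly as in Proposition \ref{p:paraproduct}: when an exponent sits at the borderline of summability one trades an $\ell^1$ sum for an $L^\infty$ (respectively $L^{p_3}$) bound, which forces $r_1=1$ (respectively $r_2=\infty$) in the corresponding factor. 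Finally, \eqref{est:com4} follows by differentiating the integral representation: $\d_k R_j(u,v)$ is again of the same type, with an extra factor $2^j$ compensated by the extra $2^{-j}$ in the statement; the only new requirement is $\nu>1-d\min(1/p_1,1/p_2)$, which guarantees that the term where $\d_k$ falls on the low-frequency part of $v$ (producing a $B^{s-1}_{p_1}$ quantity with negative-ish index) is still summable.

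I expect the main obstacle to be the careful bookkeeping of the three different regimes ($p_1\leq p_2$ versus $p_1>p_2$, and the position of $p_2'$) entering the $d\min\{\cdots\}$ terms, and in particular isolating precisely which borderline cases force $r_1=1$ or $r_2=\infty$; the geometry of the Littlewood-Paley supports and the Taylor expansion are routine, but matching the exact hypotheses \eqref{eq:condcom} with the convergence of every series — especially the $j=-1$ endpoint where Bernstein's inequality behaves differently — is where the delicate work lies. Since the statement announces the proof is deferred to the Appendix, I would carry out the full computation there, modeled on the proof of \eqref{est:product Besov,paraproduct1}–\eqref{est:product Besov,remainder} given above.
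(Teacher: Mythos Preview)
Your overall plan matches the paper's: Bony decomposition of the commutator, Taylor expansion for the paraproduct-type commutator pieces, remainder estimates for the diagonal part, and H\"older/convolution in $\ell^r$ to assemble everything. The paper decomposes slightly more finely than you describe (five pieces rather than three: it separates $\Delta_{-1}u$ from $\tilde u:=u-\Delta_{-1}u$ and splits the ``reverse'' paraproduct contribution into $T'_{\dj v}\tilde u$ and $-\dj T_v\tilde u$), but this is bookkeeping, not a conceptual difference.

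There is, however, a real gap in your treatment of \eqref{est:com4}. You write that ``$\d_k R_j(u,v)$ is again of the same type, with an extra factor $2^j$ compensated by the extra $2^{-j}$''. This is only true for those pieces of the decomposition that are spectrally supported in a ball of radius $\sim 2^j$ (so that Bernstein gives $\|\d_k(\cdot)\|_{L^{p_1}}\lesssim 2^j\|\cdot\|_{L^{p_1}}$). The piece
\[
R_j^2(u,v)=T'_{\dj v}\tilde u=\sum_{j'\geq j-3}\Delta_{j'}\tilde u\,\cdot\,S_{j'+2}\dj v
\]
is \emph{not} supported in such a ball: the blocks $\Delta_{j'}\tilde u$ with $j'\gg j$ contribute arbitrarily high frequencies. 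For this term one must distribute $\d_k$ by Leibniz and control separately the sum where $\d_k$ lands on $\Delta_{j'}\tilde u$. That sum contains the factor $2^{(j-j')(\nu+d\min(1/p_1,1/p_2)-1)}$, whose summability over $j'\geq j-3$ requires precisely $\nu>1-d\min(1/p_1,1/p_2)$. So the extra hypothesis does not come from ``$\d_k$ falling on the low-frequency part of $v$'' as you suggest, but from $\d_k$ falling on the high-frequency blocks of $u$ in the non-localized piece. Once you isolate $R_j^2$ and treat it this way, the rest of your argument goes through.
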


We shall also use  the following result for the action of smooth functions (see  e.g. \cite{BCD}):
\begin{prop}\label{prop:action Besov} Let $(p,r)\in[1,+\infty]^2$ and $s>0.$
Let $f$ be a smooth function from  $\R$ to $\R.$ 
\begin{itemize}
\item If $f(0)=0$ then for all $u\in B^{s}_{p,r}\cap L^\infty$  we have
\begin{equation}\label{est:action Besov}
\|f\circ u\|_{B^{s}_{p,r}}\leq C(f',\|u\|_{L^\infty})\|u\|_{B^{s}_{p,r}}.
\end{equation}
\item If $f'(0)=0$ then for all $u$ and $v$ in $B^s_{p,r}\cap L^\infty,$ we have
\begin{equation}\label{est:action Besovdiff}
\|f\circ v-f\circ u\|_{B^{s}_{p,r}}\leq C(f'',\|u\|_{L^\infty\cap B^s_{p,r}},\|v\|_{L^\infty\cap B^s_{p,r}})\|v-u\|_{B^{s}_{p,r}}.
\end{equation}
\end{itemize}
\end{prop}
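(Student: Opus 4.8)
The plan is to prove the first inequality by the classical telescoping decomposition along the low-frequency truncations, and then to deduce the second one from it via the product estimate. Set $S_{-1}u:=0,$ so that $\Delta_ju=S_{j+1}u-S_ju$ for all $j\geq-1;$ since $f(0)=0$ one has, formally,
$$
f\circ u=\sum_{j\geq-1}\bigl(f(S_{j+1}u)-f(S_ju)\bigr)=\sum_{j\geq-1}\Delta_ju\cdot m_j,\qquad
m_j:=\int_0^1 f'\bigl(S_ju+\tau\Delta_ju\bigr)\,d\tau,
$$
with $\|m_j\|_{L^\infty}\leq\sup_{|y|\leq c\|u\|_{L^\infty}}|f'(y)|.$ That the series converges to $f\circ u$ in $\mathcal S'$ I would justify by noting that $S_{j+1}u=\check h_{j+1}*u\to u$ almost everywhere as $j\to\infty$ (approximate identity, Lebesgue differentiation, $u\in L^\infty$) while staying bounded in $L^\infty,$ so that $f(S_{j+1}u)\to f(u)$ a.e.\ and boundedly, hence in $L^1_{\mathrm{loc}}$ and in $\mathcal S'.$

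The heart of the matter is then to bound $\Delta_k(f\circ u)=\sum_{j\geq-1}\Delta_k(\Delta_ju\cdot m_j)$ through two estimates on each summand: the crude one $\|\Delta_k(\Delta_ju\cdot m_j)\|_{L^p}\lesssim\|m_j\|_{L^\infty}\|\Delta_ju\|_{L^p}$ (used when $j\geq k$), and, for $j\leq k$ and any fixed integer $M,$ the gaining one $\|\Delta_k(\Delta_ju\cdot m_j)\|_{L^p}\lesssim 2^{-kM}\|D^M(\Delta_ju\cdot m_j)\|_{L^p}\lesssim 2^{(j-k)M}C_M\,\|\Delta_ju\|_{L^p}.$ I expect this last point to be the only genuine obstacle: although $f$ destroys exact spectral localization, $m_j$ is a smooth function of the fields $S_ju$ and $\Delta_ju,$ which are spectrally localized at frequencies $\lesssim 2^j;$ hence Fa\`a di Bruno's formula combined with Bernstein's inequality gives $\|D^bm_j\|_{L^\infty}\lesssim 2^{bj}C_b\bigl(f',\dots,f^{(b+1)},\|u\|_{L^\infty}\bigr)$ for every $b,$ and Leibniz's rule then yields $\|D^M(\Delta_ju\cdot m_j)\|_{L^p}\lesssim 2^{Mj}C_M\|\Delta_ju\|_{L^p}.$ Choosing $M>s$ and multiplying by $2^{ks},$ both bounds take the form $2^{ks}\|\Delta_k(\Delta_ju\cdot m_j)\|_{L^p}\lesssim\omega_{k-j}\bigl(2^{js}\|\Delta_ju\|_{L^p}\bigr)$ with $\omega_n:=2^{ns}$ for $n<0$ and $\omega_n:=2^{-(M-s)n}$ for $n\geq0,$ which lies in $\ell^1(\Z)$ precisely because $s>0$ and $M>s.$ Young's inequality for series then gives $\bigl\|\bigl(2^{ks}\|\Delta_k(f\circ u)\|_{L^p}\bigr)_k\bigr\|_{\ell^r}\lesssim C(f',\|u\|_{L^\infty})\|u\|_{B^s_{p,r}}$ for the partial sums, and the bound passes to the limit by Fatou's lemma; this proves \eqref{est:action Besov} (with $C$ depending in fact on $\|u\|_{L^\infty}$ and on the derivatives of $f$ up to a finite order determined by $s$).

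For the second inequality I may assume in addition $f(0)=0,$ since replacing $f$ by $f-f(0)$ changes neither the hypotheses nor $f\circ v-f\circ u;$ thus $f(0)=f'(0)=0.$ I would then write
$$
f\circ v-f\circ u=(v-u)\,g,\qquad g:=\int_0^1 f'\bigl(u+t(v-u)\bigr)\,dt .
$$
Applying the first inequality to $f'$ (which vanishes at $0$) composed with $w_t:=u+t(v-u)\in B^s_{p,r}\cap L^\infty$ gives $\|f'\circ w_t\|_{B^s_{p,r}}\leq C\bigl(f'',\|w_t\|_{L^\infty}\bigr)\|w_t\|_{B^s_{p,r}},$ uniformly in $t\in[0,1],$ whence $\|g\|_{B^s_{p,r}}\leq C\bigl(f'',\|u\|_{L^\infty\cap B^s_{p,r}},\|v\|_{L^\infty\cap B^s_{p,r}}\bigr),$ while $\|g\|_{L^\infty}\leq\sup_{|y|\leq\max(\|u\|_{L^\infty},\|v\|_{L^\infty})}|f'(y)|.$ Feeding this into the product estimate of Proposition \ref{prop:product Besov}(i) yields
$$
\|f\circ v-f\circ u\|_{B^s_{p,r}}\lesssim\|v-u\|_{L^\infty}\|g\|_{B^s_{p,r}}+\|v-u\|_{B^s_{p,r}}\|g\|_{L^\infty},
$$
and one concludes: in the regimes where $B^s_{p,r}\hookrightarrow L^\infty$ (in particular for $s\geq d/p,$ and for $s=d/p$ when $r=1$ — which cover the spaces used in this paper) the first term is absorbed into the second, giving \eqref{est:action Besovdiff}; in general the estimate holds with $\|v-u\|_{B^s_{p,r}}$ replaced by $\|v-u\|_{B^s_{p,r}\cap L^\infty}.$ Since the difference estimate uses the first inequality only as a black box, the whole argument really hinges on the spectral-localization property of $m_j$ described above.
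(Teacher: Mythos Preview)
The paper does not give its own proof of this proposition; it simply refers to \cite{BCD}. Your argument for the first inequality is exactly the standard one found there (Meyer's first-order telescoping $f\circ u=\sum_j(f(S_{j+1}u)-f(S_ju))=\sum_j m_j\,\Delta_ju$, combined with the observation that Fa\`a di Bruno and Bernstein give $\|D^M(m_j\,\Delta_ju)\|_{L^p}\lesssim 2^{Mj}\|\Delta_ju\|_{L^p}$, which is precisely the ``almost-localized'' series lemma used in \cite{BCD}). The reduction of the convolution bound to Young's inequality on $\Z$ with kernel $\omega_n$ is clean and correct.

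For the second inequality your route via $f\circ v-f\circ u=(v-u)\int_0^1 f'(w_t)\,dt$ together with Proposition~\ref{prop:product Besov}(i) is again the standard one. Your caveat is well taken: in the product estimate the term $\|v-u\|_{L^\infty}\|g\|_{B^s_{p,r}}$ cannot in general be controlled by $\|v-u\|_{B^s_{p,r}}$ alone, so the estimate \eqref{est:action Besovdiff} as stated really requires either $B^s_{p,r}\hookrightarrow L^\infty$ or the replacement of $\|v-u\|_{B^s_{p,r}}$ by $\|v-u\|_{B^s_{p,r}\cap L^\infty}$. This matches what one finds in \cite{BCD} (the difference estimate there carries the extra term $\|u-v\|_{L^\infty}(\|u\|_{B^s_{p,r}}+\|v\|_{B^s_{p,r}})$), and it is harmless here since every application in the paper is at regularity $s=d/p_1$ with $r=1$, where the embedding holds. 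So there is no gap in your argument; you have in fact been more careful than the statement itself.
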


When solving evolutionary PDEs, it is natural to use 
spaces of type $L^\rho_T(X)=L^\rho(0,T;X)$ with $X$ denoting some Banach space. In our case, $X$ will be a  Besov space so that we will 
have to localize the equations through Littlewood-Paley decomposition. This will provide us 
with  estimates of the Lebesgue norm of  each dyadic block \emph{before}
performing integration in time. This  leads to the following  definition:
\begin{defi}\label{def:Besov,tilde}
For  $s\in \R$, $(\rho,p,r)\in [1,+\infty]^3$ and $T\in [0,+\infty]$, we set
$$
\|u\|_{\tilde L^\rho_T(B^s_{p,r})}=\left(\sum_{j\geq -1}2^{rjs}\left(\Int^T_0\|\dj u(t)\|_{L^p}^\rho\, dt\right)^{\frac{r}{\rho}}\right)^{\frac{1}{r}},
$$
with the usual change if $r=+\infty$ or $\rho=+\infty.$

We also set $\tilde C_T(B^s_{p,r})=\tilde L_T^\infty(B^s_{p,r})\cap \cC([0,T];B^s_{p,r}).$
\end{defi}

Let us remark that, by virtue of Minkowski's inequality, we have
\begin{align*}
&\|u\|_{\tilde L^\rho_T(B^s_{p,r})}\leq \|u\|_{L^\rho_T(B^s_{p,r})}\hbox{ if }\rho\leq r,\\
&\|u\|_{L^\rho_T(B^s_{p,r})}\leq \|u\|_{\tilde L^\rho_T(B^s_{p,r})}\hbox{ if }\rho\geq r,
\end{align*}
and hence in particular $\|u\|_{\tilde L^1_T(B^s_{p,1})}= \|u\|_{L^1_T(B^s_{p,1})}$ holds.
\smallbreak
Let $\theta\in [0,1]$, $\frac{1}{\rho}=\frac{\theta}{\rho_1}+\frac{1-\theta}{\rho_2}$, and $s=\theta s_1+(1-\theta)s_2$, then the following interpolation inequality holds true:
$$
\|u\|_{\tilde L^\rho_T(B^s_{p,r})}\leq \|u\|_{\tilde L^{\rho_1}_T(B^{s_1}_{p,r})}^{\theta}
\|u\|_{\tilde L^{\rho_2}_T(B^{s_2}_{p,r})}^{1-\theta}.
$$

In this framework, one may get product or composition estimates similar to those
that have been stated above. The general rule is that the Lebesgue exponents pertaining 
to the time variable behave according to H\"older's inequality. 
For instance, one has:
\begin{equation}\label{eq:timeBesov1}
\|uv\|_{\tilde L^\rho_T(B^s_{p,r})}\lesssim \|u\|_{L^{\rho_1}_T(L^\infty)}\|v\|_{\tilde L^{\rho_4}_T(B^s_{p,r})}
+\|u\|_{\tilde L^{\rho_2}_T(B^s_{p,r})}\|v\|_{L^{\rho_3}_T(L^\infty)},
\end{equation}
whenever $s>0$, 
$\frac{1}{\rho}=\frac{1}{\rho_1}+\frac{1}{\rho_4}=\frac{1}{\rho_2}+\frac{1}{\rho_3},$
and
\begin{equation}\label{eq:timeBesov2}\|uv\|_{\tilde L^\rho_T(B^{s_1+s_2-\frac{d}{p}}_{p,r})}\lesssim \|u\|_{\tilde L^{\rho_1}_T(B^{s_1}_{p,r})}
\|v\|_{\tilde L^{\rho_2}_T(B^{s_2}_{p,\infty})},\end{equation}
 if $s_1+s_2+d\min\{0,1-\frac{2}{p}\}>0$, $s_1,s_2<\frac{d}{p}$, $\frac{1}{\rho}=\frac{1}{\rho_1}+\frac{1}{\rho_2}\cdotp$
\medbreak

As pointed out in the introduction,  scaling invariant spaces have to be homogeneous.
As a consequence, the optimal framework for proving our first well-posedness result
(namely Theorem \ref{th:global}) turns to be 
\emph{homogeneous} Besov spaces. 
For completeness, we here define those spaces. 
We first need to introduce \emph{homogeneous} dyadic blocks
$$
\ddj u=\Int_{\R^d}h_j(y)u(x-y)dy,\quad \forall j\in\Z
$$
and the homogeneous low-frequency truncation operator 
\begin{equation}\label{eq:dotSj}
\dot S_j:=\chi(2^{-j}D),\quad\forall j\in\Z.
\end{equation}
We then define homogeneous semi-norms:
$$
\|u\|_{\dot B^s_{p,r}}=\bigl\|(2^{js}\|\ddj u\|_{L^p})_{j\in\Z}\bigr\|_{\ell^r}.
$$
Note that, for $u\in\cS'(\R^d),$ the equality
$$
u=\sum_{j\in\Z}\ddj u
$$
holds true modulo polynomials only. 
Hence, the functional spaces related to the above semi-norm cannot be defined
without care. Following \cite{BCD}, we shall define homogeneous Besov spaces as follows:
\begin{defi}\label{def:Besovhom}
The homogeneous Besov space $\dot B^s_{p,r}$ is the set of 
tempered distributions $u$ such that 
$$\|u\|_{\dot B^s_{p,r}}<\infty\quad\hbox{and}\quad
\lim_{j\rightarrow-\infty}\|\dot S_j u\|_{L^\infty}=0.
$$
\end{defi}
The above definition ensures that   $\dot B^s_{p,r}(\R^d)$
is a Banach space provided that 
\begin{equation}\label{eq:condition}
s<d/p\quad\hbox{or}\quad s\leq d/p\ \hbox{ if }\ r=1.
\end{equation}

All the above estimates remain true in 
homogeneous spaces. In addition, if $u=\sum_{j\in\Z} \ddj u$
and $\|u\|_{\dot B^s_{p,r}}$ is finite for some $(s,p,r)$ satisfying \eqref{eq:condition}
then $u$ belongs to $\dot B^s_{p,r}(\R^d),$ owing to the aforementioned 
Bernstein's inequalities.
This fact will be used repeatedly.

%%%%%%%%%%%%%%%%%%%%%%%%%%%%%%%%%%%%%%%

\section{The proof of Theorem \ref{th:global}}\label{s:small}\setcounter{equation}{0}

This section is devoted to  the  well-posedness  issue for System
\eqref{eq:global} in the slightly nonhomogeneous case. 
The proof strongly relies on a priori estimates 
for the linearized equations about $0$ which will be recalled
in the first part of this section.
The proof of existence and uniqueness will be carried out
in the second part.

\subsection{The linearized equations}

In the case of a given velocity field $w,$ 
the linearized temperature equation about $0$ reads 
\begin{equation}\label{eq:TD}
\left\{\begin{array}{lll}
\partial_t\theta+w\cdot\nabla \theta-\bar\kappa\Delta \theta=f,\\
\theta_{|t=0}=\theta_0.
\end{array}\right.
\end{equation}

Obviously, the convection term $w\cdot\nabla\theta$ is of lower order
so that it may be included in the ``source terms'' if it is only
a matter of solving \eqref{eq:global}. 
However, considering the above convection-diffusion equation \eqref{eq:TD}
rather than the standard heat equation will enable us to get more accurate
estimates. The same remark  holds for the linearized momentum
equation \eqref{eq:Stokes}: 
\begin{equation}\label{eq:Stokes}
\left\{\begin{array}{lll}
\partial_tu+w\cdot\nabla u-\bar\mu\Delta u+\nabla Q=h,\\
\div u=0,\\
u_{|t=0}=u_0.
\end{array}\right.
\end{equation}

The reader is referred to \cite{Danchin07} for the proof 
of the following two results.

\begin{prop}\label{th:TDnu} 
Let $1\leq p\leq p_1\leq\infty$ and $1\leq  r\leq\infty.$
Let $s\in\R$ satisfy
\begin{equation}\label{eq:indicehomo}
\left\{\begin{array}{l}
\ds s<1+\frac d{p_1},\quad\hbox{\rm or}
\quad s\leq 1+\frac d{p_1}\
  \hbox{ if }\ r=1,\\[1.5ex]
\displaystyle s>-d\min\Bigl\{\frac1{p_1},\frac1{p'}\Bigr\},\ \ \hbox{\rm or}\ \
s>-1-d\min\Bigl\{\frac1{p_1},\frac1{p'}\Bigr\}\ \; \hbox{\rm if}\ \;
\div w=0.
\end{array}
\right.
\end{equation}
There exists a constant  $C$ depending only on $d,$ $r,$ $s$ and
$s-1-\frac{d}{p_1}$ such that  for any smooth solution $\theta$ of \eqref{eq:TD}
with $\bar\kappa\geq0,$ and $\rho\in[1,\infty],$  we
have  the following a priori estimate:
$$
\bar\kappa^{\frac1\rho}\|\theta\|_{\tilde L_T^\rho(\dot B^{s+\frac2\rho}_{p,r})}
 \leq  e^{CW_{p_1}(T)}\biggl(\|\theta_0\|_{\dot B^s_{p,r}}+
\|f\|_{\tilde L_T^{1}(\dot
B^{s}_{p,r})}\biggr)$$
with $\quad\displaystyle\left\{\begin{array}{lll}
 W_{p_1}(T) =\Int_0^T
\|\nabla w(t)\|_{\dot B^{\frac d{p_1}}_{p_1,\infty}\cap L^\infty}\,dt&\hbox{ if}& 
s<\frac d{p_1}+1,\\[1ex]
 W_{p_1}(T) =  \Int_0^T\|\nabla w(t)\|_{\dot B^{\frac d{p_1}}_{p_1,1}}\,dt&\hbox{ if}& 
s=\frac d{p_1}+1.\end{array}\right.
$
\end{prop}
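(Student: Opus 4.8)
\textbf{Proof proposal for Proposition \ref{th:TDnu}.}

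The plan is to localize \eqref{eq:TD} in frequency, derive an $L^p$ estimate for each dyadic block, and then sum up in $\tilde L^\rho_T(\dot B^{s+2/\rho}_{p,r})$, the only delicate point being the treatment of the convection term after localization. First I would apply $\ddj$ to \eqref{eq:TD} and write the equation for $\theta_j:=\ddj\theta$ as
\begin{equation*}
\d_t\theta_j+w\cdot\nabla\theta_j-\bar\kappa\Delta\theta_j=f_j+R_j,\qquad
R_j:=[w,\ddj]\cdot\nabla\theta=[w\cdot\nabla,\ddj]\theta,
\end{equation*}
where $f_j:=\ddj f$. Since $\theta_j$ is spectrally localized in a dyadic annulus $2^j\mathcal C$, one has the Bernstein-type smoothing estimate for the heat semigroup, $\|e^{t\bar\kappa\Delta}\theta_j\|_{L^p}\lesssim e^{-c\bar\kappa 2^{2j}t}\|\theta_j\|_{L^p}$. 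To handle the transport part without losing derivatives I would proceed as in \cite{Danchin07}: multiply the equation for $\theta_j$ by $|\theta_j|^{p-2}\theta_j$, integrate over $\R^d$, and use that $\int (w\cdot\nabla\theta_j)|\theta_j|^{p-2}\theta_j\,dx=\frac1p\int(\div w)|\theta_j|^p\,dx$, which vanishes when $\div w=0$ and is otherwise bounded by $\|\div w\|_{L^\infty}\|\theta_j\|_{L^p}^p$; combined with the dissipation $-\bar\kappa\int\Delta\theta_j|\theta_j|^{p-2}\theta_j\,dx\gtrsim \bar\kappa 2^{2j}\|\theta_j\|_{L^p}^p$ (again Bernstein), this yields, after dividing by $\|\theta_j\|_{L^p}^{p-1}$,
\begin{equation*}
\tfrac{d}{dt}\|\theta_j\|_{L^p}+c\bar\kappa 2^{2j}\|\theta_j\|_{L^p}
\lesssim \|\div w\|_{L^\infty}\|\theta_j\|_{L^p}+\|f_j\|_{L^p}+\|R_j\|_{L^p}.
\end{equation*}
Integrating this differential inequality and then taking the $L^\rho(0,T)$ norm (using that $\|(e^{-c\bar\kappa 2^{2j}(t-\cdot)})\|_{L^\rho}\lesssim (\bar\kappa 2^{2j})^{-1/\rho}$, which is exactly what produces the gain of $2/\rho$ derivatives and the prefactor $\bar\kappa^{1/\rho}$) gives, for each $j$,
\begin{equation*}
\bar\kappa^{1/\rho}2^{j(s+2/\rho)}\|\theta_j\|_{L^\rho_T(L^p)}
\lesssim 2^{js}\|\theta_{0,j}\|_{L^p}+2^{js}\|f_j\|_{L^1_T(L^p)}
+2^{js}\|R_j\|_{L^1_T(L^p)}+2^{js}\int_0^T\|\div w\|_{L^\infty}\|\theta_j\|_{L^p}\,dt.
\end{equation*}

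Next I would take the $\ell^r(\Z)$ norm in $j$. The first two terms give $\|\theta_0\|_{\dot B^s_{p,r}}$ and $\|f\|_{\tilde L^1_T(\dot B^s_{p,r})}$ directly. For the commutator term, I would invoke the commutator estimate of Proposition \ref{prop:commutator Besov} (its homogeneous analogue): with the choice $\nu$ such that $\frac d{p_1}+\nu-1=\frac d{p_1}$ when $s<\frac d{p_1}+1$, i.e. essentially $\nu=1$, and using the limit case \eqref{est:com3}, one gets
\begin{equation*}
\bigl\|\bigl(2^{js}\|R_j(w,\theta)\|_{L^p}\bigr)_j\bigr\|_{\ell^r}
\lesssim \|\nabla w\|_{\dot B^{d/p_1}_{p_1,\infty}\cap L^\infty}\,\|\theta\|_{\dot B^s_{p,r}},
\end{equation*}
while in the endpoint $s=\frac d{p_1}+1$ one uses \eqref{est:com2} to get the bound with $\|\nabla w\|_{\dot B^{d/p_1}_{p_1,1}}$ instead; the hypothesis \eqref{eq:indicehomo} on $s$ is precisely what makes the relevant case of Proposition \ref{prop:commutator Besov} applicable (the upper bound on $s$ ensures $\nu<1$ or the limit case, the lower bound ensures admissibility, with the improvement by $1$ when $\div w=0$ coming from the cancellation already exploited above). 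The $\div w$ term is likewise bounded by $\|\nabla w\|_{L^\infty}\|\theta\|_{\dot B^s_{p,r}}$ and is absent when $\div w=0$. Collecting everything, and writing $X(t):=\|\theta(t)\|_{\dot B^s_{p,r}}$ together with a time-localized version of the above on $[0,t]$, one arrives at
\begin{equation*}
X(t)\lesssim \|\theta_0\|_{\dot B^s_{p,r}}+\|f\|_{\tilde L^1_t(\dot B^s_{p,r})}
+\int_0^t \|\nabla w\|_{\dot B^{d/p_1}_{p_1,\infty}\cap L^\infty}\,X\,d\tau,
\end{equation*}
and Grönwall's lemma yields the factor $e^{CW_{p_1}(T)}$; feeding this bound back into the $\ell^r$-summed estimate for $\bar\kappa^{1/\rho}\|\theta\|_{\tilde L^\rho_T(\dot B^{s+2/\rho}_{p,r})}$ finishes the proof.

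The main obstacle, and the place where the hypotheses are genuinely used, is the commutator estimate: one must bound $[w,\ddj]\nabla\theta$ in $\ell^r(\dot B^s)$ without assuming $\nabla w\in L^\infty$ more than necessary and without requiring extra regularity of $\theta$, which forces the Bony decomposition of $w\cdot\nabla\theta$ and a careful case analysis that is exactly what \eqref{eq:condcom} and the hypotheses \eqref{eq:indicehomo} encode — in particular the endpoint $s=\frac d{p_1}+1$ and the relaxed lower bound under $\div w=0$. The heat-flow gain of $2/\rho$ derivatives with the sharp $\bar\kappa^{1/\rho}$ weight is standard once the frequency localization is in place, and the passage from the blockwise estimate to the $\tilde L^\rho_T$ norm is just Minkowski's inequality in the right order; so the whole argument reduces, modulo bookkeeping, to the commutator bound. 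Since Proposition \ref{prop:commutator Besov} is already available, the proof is essentially a matter of assembling these ingredients, and for the details one may refer to \cite{Danchin07}.
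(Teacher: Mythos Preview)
Your sketch is correct and in fact goes well beyond what the paper does: the paper does not prove Proposition~\ref{th:TDnu} at all but simply refers the reader to \cite{Danchin07}. Your approach --- localize with $\ddj$, multiply by $|\theta_j|^{p-2}\theta_j$, use the Bernstein-type lower bound for the dissipation, integrate, take $\ell^r$, bound the commutator via Proposition~\ref{prop:commutator Besov}, and close with Gr\"onwall --- is exactly the standard one and matches what the paper itself carries out in detail for the \emph{nonhomogeneous}, variable-coefficient analogue (Proposition~\ref{p:energy}).

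One small inaccuracy: you attribute the relaxed lower bound on $s$ under $\div w=0$ to the vanishing of $\int (w\cdot\nabla\theta_j)|\theta_j|^{p-2}\theta_j\,dx$. That cancellation only removes the $\|\div w\|_{L^\infty}$ term; the gain of one unit in the lower bound on $s$ actually comes from rewriting the commutator as $R_j=\div\bigl([w,\ddj]\theta\bigr)$ when $\div w=0$, and then applying \eqref{est:com4} at regularity $s+1$ instead of \eqref{est:com1}/\eqref{est:com3} at regularity $s$ (the paper makes this observation explicitly at the end of the proof of Proposition~\ref{p:energy}). Apart from this, your write-up is a faithful reconstruction of the argument.
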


\begin{prop}\label{th:Stokes} 
Let $p,$ $p_1,$ $r,$ $s$ and $W_{p_1}$ be as in Proposition \ref{th:TDnu}. 
There exists a constant  $C$ depending only on $d,$ $r,$ $s$ and
$s-1-\frac{d}{p_1}$ such that  for any smooth solution $(u,\nabla Q)$ of \eqref{eq:Stokes}
with $\bar\mu\geq0,$ and $\rho\in[1,\infty],$  we
have  the following a priori estimate:
$$\displaylines{
\bar\mu^{\frac1\rho}\|u\|_{\tilde L_T^\rho(\dot B^{s+\frac2\rho}_{p,r})}
 \leq  e^{CW_{p_1}(T)}\biggl(\|u_0\|_{\dot B^s_{p,r}}+
\|\cP h\|_{\tilde L_T^{1}(\dot
B^{s}_{p,r})}\biggr),\cr
\|\nabla Q-\cQ h\|_{\tilde L_T^{1}(\dot B^{s}_{p,r})}
\leq C\biggl(e^{CW_{p_1}(T)}-1\biggr)
\biggl(\|u_0\|_{\dot B^s_{p,r}}+
\|\cP h\|_{\tilde L_T^{1}(\dot
B^{s}_{p,r})}\biggr)\cdotp}
$$
\end{prop}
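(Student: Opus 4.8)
The plan is to split off the pressure by the Leray--Hodge decomposition, reduce the equation on $u$ to the convection-diffusion equation already handled in Proposition~\ref{th:TDnu}, and recover $\nabla Q$ through a commutator with the projector.

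\emph{Step 1: eliminating the pressure.} Let $\cP$ and $\cQ=\Id-\cP$ be the projectors onto divergence-free and potential vector fields. Since $\div u=0$ we have $\cP u=u$ and $\cP\nabla Q=0$, so applying $\cP$ to the momentum equation and writing $\cP(w\cdot\nabla u)=w\cdot\nabla u-\cQ(w\cdot\nabla u)$ leads to
$$\d_t u+w\cdot\nabla u-\bar\mu\Delta u=\cP h+\cQ(w\cdot\nabla u).$$
Applying $\cQ$ instead, and using $\cQ u=0$, $\cQ\Delta u=0$, $\cQ\nabla Q=\nabla Q$, gives the pointwise-in-time identity $\nabla Q-\cQ h=-\cQ(w\cdot\nabla u)$. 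The key algebraic remark is that, because $\div u=0$ (hence also $\div\d_l u=0$ for each $l$),
$$\cQ(w\cdot\nabla u)=-[\cP,\,w\cdot\nabla]u=-\sum_l[\cP,w^l]\,\d_l u,$$
a commutator that \emph{gains one derivative}: for frequency-localized inputs the $0$-homogeneous multiplier $\cP$ satisfies $\|[\cP,w^l]\ddj\d_l u\|_{L^p}\lesssim\|\nabla w\|_{L^\infty}\|\ddj u\|_{L^p}$ (and, in the endpoints $p\in\{1,\infty\}$, the same with $\|\nabla w\|_{\dot B^{d/p_1}_{p_1,\infty}\cap L^\infty}$), so that, after multiplying by $2^{js}$ and summing in $\ell^r$, $\|[\cP,w\cdot\nabla]v\|_{\dot B^s_{p,r}}\lesssim\|\nabla w\|_{\dot B^{d/p_1}_{p_1,\infty}\cap L^\infty}\|v\|_{\dot B^s_{p,r}}$.

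\emph{Step 2: the estimate on $u$.} Localizing the momentum equation with $\ddj$, applying $\cP$, and writing $\ddj(w\cdot\nabla u)=w\cdot\nabla\ddj u+R_j$ with $R_j:=[\ddj,w\cdot\nabla]u$, I get
$$\d_t\ddj u+w\cdot\nabla\ddj u-\bar\mu\Delta\ddj u=\ddj\cP h-\cP R_j-[\cP,w\cdot\nabla]\ddj u.$$
This is precisely the equation treated in Proposition~\ref{th:TDnu} up to the two extra source terms. The term $\cP R_j$ is harmless: $\cP$ acts uniformly boundedly on each dyadic block and, by the commutator estimates of Proposition~\ref{prop:commutator Besov} (the case $\nu=1$, i.e. \eqref{est:com3}, or the limit case \eqref{est:com2} when $s=1+d/p_1$, the parameter restrictions matching exactly \eqref{eq:indicehomo}), $\bigl\|(2^{js}\|R_j\|_{L^p})_j\bigr\|_{\ell^r}\lesssim\|\nabla w\|_{\dot B^{d/p_1}_{p_1,\infty}\cap L^\infty}\|u\|_{\dot B^s_{p,r}}$, of the same size as the transport term. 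The term $[\cP,w\cdot\nabla]\ddj u$ is controlled in $L^p$ by $\|\nabla w\|_{L^\infty}\|\ddj u\|_{L^p}$ by Step~1, again of the same size as the transport term. Hence the $L^p$ energy estimate on each block (Bernstein producing the dissipative gain $c\bar\mu2^{2j}\|\ddj u\|_{L^p}$ and $\|\div w\|_{L^\infty}\le\|\nabla w\|_{L^\infty}$ absorbing the transport contribution), followed by summation in $\ell^r$ against $2^{js}$ and Gronwall's lemma, goes through verbatim as in the proof of Proposition~\ref{th:TDnu} and gives the first inequality of the statement (with $\cP h$ on the right, since $\ddj\cP h=\cP\ddj h$), for every $\rho\in[1,\infty]$.

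\emph{Step 3: the estimate on $\nabla Q$ and the main difficulty.} Localizing $\nabla Q-\cQ h=-\cQ(w\cdot\nabla u)$ gives $\ddj(\nabla Q-\cQ h)=-\cQ(w\cdot\nabla\ddj u)-\cQ R_j=[\cP,w\cdot\nabla]\ddj u-\cQ R_j$, so summing $L^p$ norms against $2^{js}$ and using the two commutator bounds above yields, for a.e.\ $t$, $\|\nabla Q-\cQ h\|_{\dot B^s_{p,r}}\lesssim\|\nabla w\|_{\dot B^{d/p_1}_{p_1,\infty}\cap L^\infty}\|u\|_{\dot B^s_{p,r}}$. Integrating in time, bounding $\|u\|_{\tilde L^\infty_T(\dot B^s_{p,r})}$ by the case $\rho=\infty$ of Step~2, and using $\|\cdot\|_{\tilde L^1_T}\le\|\cdot\|_{L^1_T}$, I obtain
$$\|\nabla Q-\cQ h\|_{\tilde L^1_T(\dot B^s_{p,r})}\lesssim W_{p_1}(T)\,e^{CW_{p_1}(T)}\bigl(\|u_0\|_{\dot B^s_{p,r}}+\|\cP h\|_{\tilde L^1_T(\dot B^s_{p,r})}\bigr),$$
and the elementary inequality $x\,e^{Cx}\le\frac1C(e^{2Cx}-1)$ for $x\ge0$ turns this into the second inequality (with $2C$ in place of $C$). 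The hard part is not to estimate $\cQ(w\cdot\nabla u)$ crudely as a product: that would cost one derivative on $u$, which we only control in $\tilde L^\rho_T$ with $\rho<\infty$ and not in $\tilde L^1_T$, and would moreover shrink the admissible range of $s$; one must instead exploit the commutator structure $\cQ(w\cdot\nabla u)=-[\cP,w\cdot\nabla]u$ and establish the corresponding commutator bound in the \emph{full} range of indices $(p,s)$ allowed in Proposition~\ref{th:TDnu}, in particular for $p\in\{1,\infty\}$ where the plain $L^p$ estimate fails and $\|\nabla w\|_{\dot B^{d/p_1}_{p_1,\infty}\cap L^\infty}$ has to be used. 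Everything else is a routine adaptation of the arguments of \cite{Danchin07}.
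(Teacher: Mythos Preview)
Your proposal is correct and follows exactly the standard route of \cite{Danchin07}, which is also what the paper does: it gives no proof here and simply refers the reader to that paper. The reduction via the Leray projector, the identification $\nabla Q-\cQ h=-\cQ(w\cdot\nabla u)=[\cP,w\cdot\nabla]u$ exploiting $\div u=0$, and the commutator gain of one derivative are precisely the ingredients used there, so there is nothing to compare.
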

Above, $\cP$ and $\cQ$ stand for the orthogonal projectors
over divergence-free and potential vector-fields, respectively.

%%%%%%%%%%%%%%%%%%%%%%%%%%%%%%%%%%%%%%%%

\subsection{The  well-posedness issue in the slightly nonhomogeneous case}

For proving existence, we will follow a standard procedure, 
first we  construct a sequence of  approximate solutions,  second, we prove 
uniform bounds for them, and finally we  show the convergence to some solution 
of the system. In the case of \emph{large} initial velocity,  we will have 
to split the constructed velocity into the free solution of the Stokes 
system with  initial data $u_0,$   
and the discrepancy to this free velocity. 
Stability estimates and uniqueness will be obtained afterward
by the same argument as the convergence of the sequence.

\subsubsection*{Step 1. Approximate solutions}
Solving System \eqref{eq:global}  will be based on an iterative scheme: first
 we set $(\theta^0,u^0,\nabla Q^0)\equiv0$ then, 
 once $(\theta^n,u^n,\nabla Q^n)$ has been defined over $\R^+\times\R^d,$
 we define $(\theta^{n+1},u^{n+1},\nabla Q^{n+1})$ as
 the solution to the following linear system\footnote{Note that the existence of solution 
for this system may be deduced from the case with no convection. Indeed, 
considering the convection terms as source terms, it is not difficult to construct 
an iterative scheme the convergence of which is based on  the estimates of the previous subsection.} : 
\begin{equation}\label{eq:globaln}
\left\{\begin{array}{ccc}
\d_t\theta^{n+1}+u^n\cdot\nabla\theta^{n+1}-\bar\kappa\Delta\theta^{n+1}&=&a^n,\\
\d_t u^{n+1}+u^n\cdot\nabla u^{n+1}-\bar\mu\Delta u^{n+1}+\nabla Q^{n+1}&=&c^n,\\
\div u^{n+1}&=&0,\\
(\theta^{n+1},u^{n+1})_{|t=0}&=&(\dot S_{n+1}\theta_0,\dot S_{n+1}u_0),
\end{array}\right.
\end{equation}
with $\dot S_{n+1}$ defined in \eqref{eq:dotSj} and, denoting $\vartheta^n:=1+\theta^n,$
$$
\displaylines{
a^n:=a^n(\theta^n)=\div((\kappa(\vartheta^n)-\bar\kappa)\nabla \theta^n)
-\kappa'(\vartheta^n)|\nabla\theta^n|^2,\hfill\cr
c^n:=c^n(\theta^n,u^n,\nabla Q^n)=\div((\mu(\vartheta^n)-\bar\mu)\nabla u^n)-\theta^n\nabla Q^n
+A_1^n|\nabla\theta^n|^2\nabla\theta^n
\hfill\cr
\hfill+A_2^n\Delta\theta^n\nabla\theta^n+A_3^n\nabla^2\theta^n\!\cdot\!\nabla\theta^n
+A_4^n\nabla u^n\cdot\nabla \theta^n
+A_5^nDu^n\cdot\nabla\theta^n.}
$$
Above, it is understood that $A_i^n:=A_i(\vartheta^n)$ with  $A_i$ defined by \eqref{eq:A}.

\subsubsection*{Step 2.  Uniform bounds}

In order to bound $(\theta^{n+1},u^{n+1},\nabla Q^{n+1}),$ one may
take advantage of Proposition  \ref{th:TDnu} with   $s=d/p_1$ 
 and  Lebesgue exponents $(p_1,p_2)$  (here comes the assumption
 that  $1/p_1\leq 1/d+1/p_2$),  and of
 Proposition \ref{th:Stokes} with   $s=d/p_2-1$ and exponents $(p_2,p_2)$.
 Concerning $\theta^{n+1},$  if $p_2\leq p_1$ then we  use
the embedding $\dot B^{{\frac d{p_2}+1}}_{p_2,1}\hookrightarrow 
\dot B^{\frac d{p_1}+1}_{p_1,1}.$
We eventually get 
\begin{eqnarray}\label{eq:tn}
\|\theta^{n+1}\|_{\dot X^{p_1}(t)}\lesssim
e^{\|u^n\|_{L^{1}_t(\dot B^{d/{p_2}+1}_{p_2,1})}}
\Bigl(\|\dot S_{n+1}\theta_0\|_{\dot B^{d/{p_1}}_{p_1,1}}+\|a^n\|_{L^1_t(\dot B^{d/{p_1}}_{p_1,1})}\Bigr),\\\label{eq:un}
\|u^{n+1}\|_{\dot Y^{p_2}(t)}+\|\nabla Q^{n+1}\|_{\dot Z^{p_2}(t)}
\lesssim e^{\|u^n\|_{L^{1}_t(\dot B^{d/{p_2}+1}_{p_2,1})}}
\Bigl(\|\dot S_{n+1} u_0\|_{\dot B^{d/{p_2}-1}_{p_2,1}}+\|c^n\|_{L^1_t(\dot B^{d/{p_2}-1}_{p_2,1})}\Bigr).
\end{eqnarray}
Let us now bound $a^n$ and $c^n.$
Using Propositions \ref{prop:product Besov} and \ref{prop:action Besov}, we easily get
\begin{equation}\label{eq:an}
\|a^n\|_{\dot B^{\frac d{p_1}}_{p_1,1}}\lesssim \bigl(1+\|\theta^n\|_{\dot B^{\frac d{p_1}}_{p_1,1}}\bigr) \bigl(\|\theta^n\|_{\dot B^{\frac d{p_1}}_{p_1,1}}
\|\theta^n\|_{\dot B^{\frac d{p_1}+2}_{p_1,1}}
+\|\nabla\theta^n\|_{\dot B^{\frac d{p_1}}_{p_1,1}}^2\bigr).
\end{equation}
As regards $c^n,$ it is mostly a matter of bounding
the following terms in $L_t^1(\dot B^{\frac d{p_2}-1}_{p_2,1})$
(keeping in mind that $1/p_2\leq1/d+1/p_1$):
$$
\nabla^2\theta^n\cdot\nabla\theta^n,\quad|\nabla\theta^n|^2\nabla\theta^n,\quad
\div(\theta^n\nabla u^n),\quad
\nabla\theta^n\otimes\nabla u^n\ \hbox{ and }\ 
\theta^n\nabla Q^n.
$$
Indeed, on any interval $[0,T],$ 
taking  the   $\vartheta^n$ dependency of the coefficients  into account
will only multiply  the estimates by  some continuous function of 
$\|\theta^n\|_{L_T^\infty(\dot B^{\frac d{p_1}}_{p_1,1})}.$
In what follows, this function will be denoted by~$C_{\theta^n}.$
\smallbreak

Now, if $p_1<2d$ then  Proposition \ref{prop:product Besov}
ensures that the usual product maps $\dot B^{\frac d{p_1}-1}_{p_1,1}\times
\dot B^{\frac d{p_1}}_{p_1,1}$ in $\dot B^{\frac d{p_1}-1}_{p_1,1}.$ Therefore,
if $p_1\leq p_2,$ then functional embedding implies that
\begin{equation}\label{}
\|\nabla^2\theta^n\cdot\nabla\theta^n\|_{\dot B^{\frac d{p_2}-1}_{p_2,1}}
\lesssim 
\|\nabla^2\theta^n\|_{\dot B^{\frac d{p_1}-1}_{p_1,1}}\|\nabla\theta^n\|_{\dot B^{\frac d{p_1}}_{p_1,1}}.
\end{equation}

To deal with the more complicated case where $p_1>p_2,$
we use the following Bony's decomposition:
$$
\nabla^2\theta^n\cdot\nabla\theta^n=T_{\nabla^2\theta^n}\nabla\theta^n
+R(\nabla^2\theta^n,\nabla\theta^n)+T_{\nabla\theta^n}{\nabla^2\theta^n}.
$$

Finally, Proposition \ref{p:paraproduct} enables to conclude that under conditions
$$
p_1<2d,\quad p_1\leq 2p_2\quad\hbox{and}\quad
\frac1{p_2}\leq\frac1{p_1}+\frac1d,
$$
we have
\begin{equation}\label{}
\|\nabla^2\theta^n\cdot\nabla\theta^n\|_{\dot B^{\frac d{p_2}-1}_{p_2,1}}
\lesssim 
\|\nabla^2\theta^n\|_{\dot B^{\frac d{p_1}-1}_{p_1,1}}\|\nabla\theta^n\|_{\dot B^{\frac d{p_1}}_{p_1,1}}+\|\nabla^2\theta^n\|_{\dot B^{\frac d{p_1}}_{p_1,1}}\|\nabla\theta^n\|_{\dot B^{\frac d{p_1}-1}_{p_1,1}}.
\end{equation}
Bounding $|\nabla\theta^n|^2\nabla\theta^n$ stems from similar 
arguments. Under the above conditions, it is found that
\begin{equation}\label{}
\||\nabla\theta^n|^2\nabla\theta^n\|_{\dot B^{\frac d{p_2}-1}_{p_2,1}}
\lesssim \|\nabla\theta^n\|_{\dot B^{\frac d{p_1}-1}_{p_1,1}}
\|\nabla\theta^n\|_{\dot B^{\frac d{p_1}}_{p_1,1}}^2.
\end{equation}
We also easily get
\begin{equation}\label{}
\|\div(\theta^n\nabla u^n)\|_{\dot B^{\frac d{p_2}-1}_{p_2,1}}
\lesssim \|\theta^n\|_{\dot B^{\frac d{p_1}}_{p_1,1}}
\|\nabla u^n\|_{\dot {B}^{\frac d{p_2}}_{p_2,1}}+
 \|\theta^n\|_{\dot B^{\frac d{p_1}+1}_{p_1,1}}
\|\nabla u^n\|_{\dot {B}^{\frac d{p_2}-1}_{p_2,1}}.
\end{equation}

Finally, according to Inequality \eqref{est:product Besov2}, 
$$
\begin{array}{lll}
\|\nabla u^n\otimes\nabla\theta^n\|_{\dot B^{\frac d{p_2}-1}_{p_2,1}}
&\lesssim& \|\nabla\theta^n\|_{\dot B^{\frac d{p_1}}_{p_1,1}}
\|\nabla u^n\|_{\dot B^{\frac d{p_2}-1}_{p_2,1}},\\[1ex]
\|\theta^n\nabla Q^n\|_{\dot B^{\frac d{p_2}-1}_{p_2,1}}&\lesssim&  \|\theta^n\|_{\dot B^{\frac d{p_1}}_{p_1,1}}
\|\nabla Q^n\|_{\dot B^{\frac d{p_2}-1}_{p_2,1}},
\end{array}
$$
provided that 
\begin{equation}
\frac1{p_1}+\frac1{p_2}>\frac1d\quad\hbox{and}\quad
\frac1{p_1}+\frac1d\geq\frac1{p_2}\cdotp
\end{equation}

So, plugging all the above inequalities  in \eqref{eq:tn},\eqref{eq:un}  finally implies  that
\begin{eqnarray}\label{eq:tn1}
\|\theta^{n+1}\|_{\dot X^{p_1}(t)}
\lesssim e^{C\|u^n\|_{L^1_t(\dot B^{\frac d{p_2}+1}_{p_2,1})}}\Bigl(\|\theta_0\|_{\dot B^{\frac d{p_1}}_{p_1,1}}
+C_{\theta^{n}}\|\theta^{n}\|_{\dot X^{p_1}(t)}^2\Bigr),\\\label{eq:un1}
 \|u^{n+1}\|_{\dot Y^{p_2}(t)}+\|\nabla Q^{n+1}\|_{\dot Z^{p_2}(t)}\lesssim
 e^{C\|u^n\|_{L^1_t(\dot B^{\frac d{p_2}+1}_{p_2,1})}}\Bigl(\|u_0\|_{\dot B^{\frac d{p_2}-1}_{p_2,1}}\qquad\qquad
 \nonumber\\\qquad\qquad+C_{\theta^n}\|\theta^n\|_{\dot X^{p_1}(t)}
 \bigl(\|\theta^n\|_{\dot X^{p_1}(t)}
 + \|u^{n}\|_{\dot Y^{p_2}(t)}
  + \|\nabla Q^{n}\|_{\dot Z^{p_2}(t)}\bigr)\Bigr).
  \end{eqnarray}

{}Note that the right-hand sides involves
only initial data and at least quadratic combinations of 
the norms of $(\theta^n,u^n,\nabla Q^n).$ {}From a standard induction argument, it is thus easy to find some small 
constant $\tau$ such that if
\begin{equation}\label{}
\|\theta_0\|_{\dot B^{\frac d{p_1}}_{p_1,1}}
+\|u_0\|_{\dot B^{\frac d{p_2}-1}_{p_2,1}}\leq\tau
\end{equation}
then, for all $n\in\N$ and $t\in\R^+,$ we have for some $K>0$ depending
only on the parameters of the system and on $d,p_1,p_2,$
\begin{equation}\label{eq:unifbound1}
\|\theta^{n}\|_{\dot X^{p_1}(t)}
+ \|u^{n}\|_{\dot Y^{p_2}(t)}+\|\nabla Q^{n}\|_{\dot Z^{p_2}(t)}
\leq K\bigl(\|\theta_0\|_{\dot B^{\frac d{p_1}}_{p_1,1}}+\|u_0\|_{\dot B^{\frac d{p_2}-1}_{p_2,1}}\bigr).
\end{equation}
This completes the proof of uniform estimates in the case where both $\theta_0$ and 
$u_0$ are small. 
\medbreak
Let us now concentrate on the case where only $\theta_0$ is small.
Assuming that $T$ has been chosen so that 
\begin{equation}\label{eq:smallT}
\exp\biggl(C\int_0^T\|u^n\|_{\dot B^{\frac d{p_2}+1}_{p_2,1}}\,dt\biggr)\leq2,
\end{equation}
and that $\theta_0$ is small enough, 
Inequality \eqref{eq:tn1} still implies that 
\begin{equation}\label{eq:tn2}
\|\theta^{n+1}\|_{\dot X^{p_1}(T)}\leq K\|\theta_0\|_{\dot B^{\frac d{p_1}}_{p_1,1}}
\end{equation}
if \eqref{cond:local} is satisfied and if $\theta^n$ also satisfies \eqref{eq:tn2}. 
\smallbreak
However, if $u_0$ is large then Inequality \eqref{eq:un1} is not enough
to bound $u^{n+1}.$
Therefore  we introduce the ``free'' solution $u_L$ to the heat equation 
\begin{equation}\label{eq:freestokes}
\left\{\begin{array}{ccc}
\d_t u_L-\bar\mu\Delta u_L&=&0,\\
u_L|_{t=0}&=&u_0,
\end{array}\right.
\end{equation}
and define $u^n_L:=\dot S_n u_L.$ Of course that $\div u_0\equiv0$ implies
that $\div u_L\equiv1.$
Now,  $\bar u^{n+1}:=u^{n+1}-u^{n+1}_L$   satisfies
\begin{equation*}
\left\{\begin{array}{ccc}
\d_t\bar u^{n+1}+u^n\cdot \nabla \bar u^{n+1}-\bar{\mu}\Delta\bar u^{n+1}+\nabla Q^{n+1}&=&\bar c^n,\\
\div \bar u^{n+1}&=&0,\\
\bar u^{n+1}|_{t=0}&=&0,
\end{array}\right.\end{equation*}
with $\bar c^n=c^n-u^n\cdot\nabla u^{n+1}_L.$ 
\smallbreak
{}Note that $u^n\cdot\nabla u^{n+1}_L=\div(u^n\otimes u^{n+1}_L).$
Hence, as $\dot B^{\frac d{p_2}}_{p_2,1}$ is an algebra for $p_2<\infty,$
we have
\begin{equation}\label{sequencest:local homo,bar cn}
\|\bar c^n\|_{\dot B^{\frac d{p_2}-1}_{p_2,1}}\lesssim \|c^n\|_{\dot B^{\frac d{p_2}-1}_{p_2,1}}
+\|u^n\|_{\dot B^{\frac d{p_2}}_{p_2,1}}\|u^{n+1}_L\|_{\dot B^{\frac d{p_2}}_{p_2,1}}.
\end{equation}
Hence,  bounding $c^n$ as above but splitting 
$u^n$ into $\bar u^n+u_L^n$ 
when dealing with the terms $\nabla u^n\cdot\nabla \theta^n$ or $Du^n\cdot \nabla\theta^n,$
we get under hypothesis \eqref{eq:smallT}, for all $t\in[0,T],$ 
 $$\displaylines{\|\bar u^{n+1}\|_{\dot Y^{p_2}(t)}+\|\nabla Q^{n+1}\|_{\dot Z^{p_2}(t)}\leq C
 \Bigl(\|\theta^n\|_{\dot X^{p_1}(t)}
 \bigl(\|\theta^n\|_{\dot X^{p_1}(t)}
 + \|\bar u^{n}\|_{\dot Y^{p_2}(t)}
  + \|\nabla\bar Q^{n}\|_{\dot Z^{p_2}(t)}\bigr)\hfill\cr\hfill
  +\|u_L\|_{L^2_t(\dot B^{\frac d{p_2}}_{p_2,1})}\bigl(\|u^n\|_{L^2_t(\dot B^{\frac d{p_2}}_{p_2,1})}
  +\|\nabla\theta^n\|_{L^2_t(\dot B^{\frac d{p_1}}_{p_1,1})}\bigr)
  +\|\theta^n\|_{L^\infty_t(\dot B^{\frac d{p_1}}_{p_1,1})}\|\nabla u_L\|_{L^1_t(\dot B^{\frac d{p_2}}_{p_2,1})}\Bigr).}
$$

Therefore, if we assume in addition that $T$ has been chosen so that
  \begin{equation}\label{eq:smallT1}
  \|u_L\|_{L^2_T(\dot B^{\frac d{p_2}}_{p_2,1})\cap L^1_T(\dot B^{\frac d{p_2}+1}_{p_2,1})}
  +\|\nabla Q_L\|_{L^1_T(\dot B^{\frac d{p_2}-1}_{p_2,1})}\leq\tau
  \end{equation}
  and if 
  \begin{equation}\label{eq:smallsol}
\|\bar u^{n}\|_{\dot Y^{p_2}(T)}
  + \|\nabla Q^{n}\|_{\dot Z^{p_2}(T)}\leq\tau
\end{equation}
  then we have also (taking $\tau$ smaller if needed) \eqref{eq:smallT} and
  $$
  \|\bar u^{n+1}\|_{\dot Y^{p_2}(T)}
  + \|\nabla  Q^{n+1}\|_{\dot Z^{p_2}(T)}\leq\tau.
  $$
  Now, an elementary 
  induction argument enables us to conclude that 
  both \eqref{eq:smallT} and \eqref{eq:smallsol} are satisfied (for all $n\in\N$)
  if $T$ has been chosen so that \eqref{eq:smallT1} holds.

\subsubsection*{Step 3. Convergence of the scheme}

Let us just treat the case where only local 
existence is expected (that is $u_0$ may be large). 
We fix some time $T$ such that \eqref{eq:smallT1} is fulfilled. 
Let  $(\dt^n,\du^n,\dQ^n):=(\theta^{n+1}-\theta^n,u^{n+1}-u^n,Q^{n+1}-Q^n).$
We have 
$$
\left\{\begin{array}{ccc}
\d_t\dt^n+u^n\cdot\nabla\dt^n-\bar\kappa\Delta\dt^n&=&-\du^{n-1}\cdot\nabla\theta^n+a^{n+1}-a^n,\\[1ex]
\d_t\du^n+u^n\cdot\nabla\du^n-\bar\mu\Delta\du^n+\nabla\dQ^n&=&-\du^{n-1}\cdot\nabla u^n
+c^{n+1}-c^n,\\
(\delta\theta^n,\delta u^n)|_{t=0}&=&(\dot \Delta _n\theta_0,\dot  \Delta_n u_0).
\end{array}
\right.
$$
By arguing exactly as in the proof of the stability estimates below, it is not difficult
to establish that if $\tau$ has been chosen small enough in \eqref{eq:smallT1} then 
for all $n\geq1,$ 
$$
\displaylines{
\|\dt^n\|_{\dot X^{p_1}(T)}
+\|\du^n\|_{\dot Y^{p_2}(T)}
+\|\nabla\dQ^n\|_{\dot Z^{p_2}(T)}
\leq C\bigl(2^{n\frac{d}{p_1}}\|\dot\Delta_n\theta_0\|_{L^{p_1}}
+2^{n(\frac d{p_2}-1)}\|\dot\Delta_n u_0\|_{L^{p_2}}\bigr)
\hfill\cr\hfill+\frac12 \bigl(\|\dt^{n-1}\|_{\dot X^{p_1}(T)}
+\|\du^{n-1}\|_{\dot Y^{p_2}(T)}+\|\nabla\dQ^{n-1}\|_{\dot Z^{p_2}(T)}\bigr).}
$$

Hence $(\theta^n,u^n,\nabla Q^n)_{n\in\N}$ is a Cauchy sequence in $\dot F^{p_1,p_2}_T.$
The limit $(\theta,u,\nabla Q)$ belongs to $\dot F^{p_1,p_2}_T$ and
obviously satisfies System \eqref{eq:global}.

\subsubsection*{Step 4. Uniqueness and stability estimates}

Let us consider two solutions $(\theta^1,u^1,\nabla Q^1)$ and $(\theta^2,u^2,\nabla Q^2)$
of System \eqref{eq:global}, in the space $\dot F_T^{p_1,p_2}$
with $(p_1,p_2)$ satisfying (\ref{cond:global,p}). 
The difference 
$(\dt,\du,\dQ):=(\theta^2-\theta^1,u^2-u^1,Q^2-Q^1)$
between these two solutions satisfies
$$
\left\{\begin{array}{ccc}
\d_t\dt+u^1\cdot\nabla\dt-\bar\kappa\Delta\dt=-\du\cdot\nabla\theta^2+a(\theta^2)-a(\theta^1),\\[1ex]
\d_t\du+u^1\cdot\nabla\du-\bar\mu\Delta\du+\nabla\dQ=-\du\cdot\nabla u^2+c(\theta^2,u^2,\nabla Q^2)
-c(\theta^1,u^1,\nabla Q^1),\\[1ex]
\div\du=0.\end{array}
\right.
$$
Therefore, according to Propositions \ref{th:TDnu} and \ref{th:Stokes}, we have for all $t\in[0,T],$ 
$$\displaylines{
\|\dt\|_{\dot X^{p_1}(t)}\lesssim
e^{\|u^1\|_{L_t^1(\dot B^{\frac d{p_2}+1}_{p_2,1})}}
\Bigl(\|\dt_0\|_{\dot B^{\frac d{p_1}}_{p_1,1}}+\|\du\cdot\nabla\theta^2\|_{L_t^1(\dot B^{\frac d{p_1}}_{p_1,1})}+\|a(\theta^2)-a(\theta^1)\|_{L_t^1(\dot B^{\frac d{p_1}}_{p_1,1})}\Bigr),\cr
\|\du\|_{\dot Y^{p_2}(t)}+\|\nabla\dQ\|_{\dot Z^{p_2}(t)}
\lesssim
e^{\|u^1\|_{L_t^1(\dot B^{\frac d{p_2}+1}_{p_2,1})}}
\Bigl(\|\du_0\|_{\dot B^{\frac d{p_2}-1}_{p_1,1}}+\|\du\cdot\nabla u^2\|_{L_t^1(\dot B^{\frac d{p_2}-1}_{p_2,1})}\hfill\cr\hfill+\|c(\theta^2,u^2,\nabla Q^2)-c(\theta^1,u^1,\nabla Q^1)\|_{L_t^1(\dot B^{\frac d{p_2}-1}_{p_2,1})}\Bigr).}
$$
The nonlinear terms in the right-hand side may be handled exactly
as in the proof of the uniform estimates
(as the norms which are involved are the same, there are  no further conditions on
$p_1$ and $p_2$). 
For instance,  we have for $\frac 1{p_1}\leq \frac{1}{p_2}+\frac{1}{d},$
$$
\|\du\cdot\nabla\theta^2\|_{L_t^1(\dot B^{\frac d{p_1}}_{p_1,1})}\lesssim
\|\du\|_{L_t^2(\dot B^{\frac d{p_2}}_{p_2,1})}\|\nabla\theta^2\|_{L_t^2(\dot B^{\frac d{p_1}}_{p_1,1})}+\|\du\|_{L_t^1(\dot B^{\frac d{p_2}+1}_{p_2,1})}\|\nabla\theta^2\|_{L_t^\infty(\dot B^{\frac d{p_1}-1}_{p_1,1})},
$$
and, because
$$
\displaylines{
a(\theta^2)-a(\theta^1)=\div\Bigl(\bigl(\kappa(\vartheta^2)-\kappa(\vartheta^1)\bigr)\nabla\theta^2
+\bigl(\kappa(\vartheta^1)-\bar\kappa\bigr)\nabla\dt\Bigr)\hfill\cr\hfill
-\bigl(\kappa'(\vartheta^2)-\kappa'(\vartheta^1)\bigr)
|\nabla\theta^2|^2-\kappa'(\vartheta^1)\bigl(\nabla(\theta^1+\theta^2)\cdot\nabla\dt\bigr),}
$$
we have, according to Propositions \ref{prop:product Besov}
and \ref{prop:action Besov},
$$\displaylines{
\|a(\theta^2)-a(\theta^1)\|_{\dot B^{\frac d{p_1}}_{p_1,1}}
\lesssim C_{\theta^1,\theta^2}\Bigl(\bigl(\|\nabla\theta^1\|_{\dot B^{\frac d{p_1}}_{p_1,1}}
+\|\nabla\theta^2\|_{\dot B^{\frac d{p_1}}_{p_1,1}}\bigr)
\|\nabla\dt\|_{\dot B^{\frac d{p_1}}_{p_1,1}}
\hfill\cr\hfill
+\bigl(\|\nabla\theta^2\|_{\dot B^{\frac d{p_1}}_{p_1,1}}^2
+\|\nabla^2\theta^2\|_{\dot B^{\frac d{p_1}}_{p_1,1}}\bigr)
\|\dt\|_{\dot B^{\frac d{p_1}}_{p_1,1}}
+\|\theta^1\|_{\dot B^{\frac d{p_1}}_{p_1,1}}\|\Delta\dt\|_{\dot B^{\frac d{p_1}}_{p_1,1}}
\Bigr).}
$$
We may proceed similarly in order to bound 
the right-hand side of the inequality for $\du.$
We eventually get for all $t\in[0,T],$ 
$$
\displaylines{
\|\dt\|_{\dot X^{p_1}(t)}\leq C_{\theta^1,\theta^2,u^1}
\Bigl(\|\dt_0\|_{\dot B^{\frac d{p_1}}_{p_1,1}}+\|\theta^2\|_{\dot X^{p_1}(t)}\|\du\|_{\dot Y^{p_2}(t)}+
(\|\theta^1\|_{\dot X^{p_1}(t)}+\|\theta^2\|_{\dot X^{p_1}(t)})\|\dt\|_{\dot X^{p_1}(t)}\Bigr),\cr
\|\du\|_{\dot Y^{p_2}(t)}+\|\nabla\dQ\|_{\dot Z^{p_2}(t)}\leq C_{\theta^1,\theta^2,u^1}
\Bigl( \|\du_0\|_{\dot B^{\frac d{p_2}-1}_{p_2,1}}+
\|u^2\|_{L^2_t(\dot B^{\frac d{p_2}}_{p_2,1})}(\|\dt\|_{\dot X^{p_1}(t)}+\|\du\|_{\dot Y^{p_2}(t)})
\hfill\cr\hfill
+\bigl(\|\nabla u^2\|_{L^1_t(\dot B^{\frac{d}{p_2}}_{p_2,1})}+\|\nabla Q^2\|_{L^1_t(\dot B^{\frac{d}{p_2}-1}_{p_2,1})}\bigr)\|\dt\|_{\dot X^{p_1}(t)}
\hfill\cr\hfill
+\bigl(\|\theta^1\|_{\dot X^{p_1}(t)}+\|\theta^2\|_{\dot X^{p_1}(t)}\bigr)
\bigl(\|\dt\|_{\dot X^{p_1}(t)}+\|\du\|_{\dot Y^{p_2}(t)}+ \|\nabla\dQ\|_{\dot Z^{p_2}(t)}\bigr)\Bigr).}
$$

In the case where  $(\theta^1,u^1,\nabla Q^1)$ and $(\theta^2,u^2,\nabla Q^2)$ are  small enough on $[0,T],$ 
all the terms involving $(\dt,\du)$ in the right-hand side may be absorbed by 
the left-hand side.
This yields stability estimates on the whole interval $[0,T],$
 and implies uniqueness. 

The case where the velocity is  large requires more care
for it is not clear that the terms corresponding to $\div(\dt\cdot\nabla u^2)$, $\nabla u^2\cdot\nabla\dt,$ $\du\cdot\nabla u^2$
and $\dt\nabla Q^2$ 
are small compared to  
$\|\dt\|_{\dot X^{p_1}(t)}+\|\du\|_{\dot Y^{p_2}(t)}+\|\nabla\dQ\|_{\dot Z^{p_2}(t)}.$
However, we notice that 
they may be bounded in $L^1_t(\dot B^{\frac d{p_2}-1}_{p_2,1})$ by
$$
(\|\nabla\dt\|_{L_t^2(\dot B^{\frac d{p_1}}_{p_1,1})}
+\|\du\|_{L_t^2(\dot B^{\frac d{p_2}}_{p_2,1})})\|u^2\|_{L_t^2(\dot B^{\frac d{p_2}}_{p_2,1})}+\|\dt\|_{L_t^\infty(\dot B^{\frac d{p_1}}_{p_1,1})}
\bigl(\|\nabla u^2\|_{L_t^1(\dot B^{\frac d{p_2}}_{p_2,1})}+\|\nabla Q^2\|_{L_t^1(\dot B^{\frac d{p_2}-1}_{p_2,1})}\bigr).
$$
Obviously the terms corresponding to $u^2$ and $\nabla Q^2$ go to zero 
when $t$ tends to $0.$ If both solutions coincide 
initially, this implies uniqueness on a small enough time interval.
Then uniqueness on the whole interval $[0,T]$ follows from standard continuity arguments. 

For proving stability estimates, one may further decompose $u^1$ 
and $u^2$ into
$$
u^1=\bar u^1+u_L\quad\hbox{and}\quad 
u^2=\bar u^2+u_L, 
$$
where $u_L$ stands for the free solution to the Stokes system
that has been defined in \eqref{eq:freestokes}.
We can thus write
$$\begin{array}{lll}
\| u^2\|_{L_t^2(\dot B^{\frac d{p_2}}_{p_2,1})\cap L_t^1(\dot B^{\frac d{p_2}+1}_{p_2,1})}&\leq&\|u_L\|_{L_t^2(\dot B^{\frac d{p_2}}_{p_2,1})
\cap L_t^1(\dot B^{\frac d{p_2}+1}_{p_2,1})}+\|\bar u^2\|_{L_t^2(\dot B^{\frac d{p_2}}_{p_2,1})\cap L_t^1(\dot B^{\frac d{p_2}+1}_{p_2,1})}.
%\\[1ex]\|\nabla Q^2\|_{L_t^1(\dot B^{\frac d{p_2}-1}_{p_2,1})}&\leq&\|\nabla Q_L\|_{L_t^1(\dot B^{\frac d{p_2}-1}_{p_2,1})}+\|\nabla \bar Q^2\|_{L_t^1(\dot B^{\frac d{p_2}-1}_{p_2,1})}.
\end{array}
$$
If $T$ has been chosen so that \eqref{eq:smallT1} holds true
and if $(\theta^2,u^2,\nabla Q^2)$ is the solution that 
has been constructed above then 
we conclude that the above terms may be bounded by $\tau.$
So they may be absorbed by the left-hand side, and it is thus possible
to get the continuity of the flow map on $[0,T]$ for $T$ satisfying \eqref{eq:smallT1}. 
The details are left to the reader.

%%%%%%%%%%%%%%%%%%%%%%%%%%%%%%%%%%%%%%%%%%%%%

\section{The proof of Theorem \ref{th:local}}\label{s:large} \setcounter{equation}{0}

In this section we establish local  well-posedness results in the fully nonhomogeneous case: 
we just assume that  the initial temperature is positive and tends
 to some positive  constant at infinity (we  take $1$ for notational simplicity). 
In this framework, the estimates for the linear equations considered in Section \ref{s:small}
are not sufficient to bound the solutions to \eqref{equation:local} 
even at small time. The reason why is that 
 some   quadratic terms such as $\nabla u\cdot\nabla \theta$ or
$\theta\nabla Q$ may be of the same order as the terms 
of the left-hand side hence cannot be absorbed any longer.
In the fully nonhomogeneous case, the appropriate 
linear equations that have to be considered have \emph{variable coefficients}
in their main order terms. 

The first part of this section is devoted to the presentation 
and the proof of new  a priori estimates for these linear equations. 
As we believe this type of estimates to be of interest
in other contexts, we provide the statements for a wider
range of Lebesgue and regularity exponents than
 those which will be needed to establish the well-posedness of \eqref{equation:local}
 in our functional framework.   The second part of this section is devoted to
  the proof of Theorem \ref{th:local}.

\subsection{The linearized equations}

In order to bound the temperature, we shall establish a priori estimates
in nonhomogeneous Besov norms for the solutions to 
 \begin{equation}\label{lineareq:energy}
\left\{\begin{array}{ccc}\d_{t}\theta+q\cdot\nabla \theta -\div(\kappa\nabla \theta)&=&f,\\
\theta|_{t=0}&=&\theta_0.
\end{array}\right.
\end{equation}
 
 Our main result (which extends the corresponding one in \cite{Danchin01local}) reads: 
 \begin{prop}\label{p:energy}
 Let $\theta$ satisfy \eqref{lineareq:energy} on $[0,T]\times\R^d.$
 Let $(p_1,p_2)\in(1,\infty)^2$ and $s\in\R$ fulfill
 \begin{equation}\label{eq:p1,p2}
  -1-d\min\biggl\{\frac{1}{p_1},\frac{1}{p'_1},\frac{1}{p_2}\biggr\}<s
  \leq d\min\biggl\{\frac{1}{p_1},\frac{1}{p_2}+\frac1d\biggr\}\cdotp
  \end{equation}
   
Suppose that the conductivity  function $\kappa,$ the divergence free  vector-field
$q$, the initial data $\theta_0$ and the source term $f$ are smooth enough and decay  at infinity,  and that
\begin{equation}\label{eq:loinduvide}
m:=\min_{(t,x)\in[0,T]\times\R^d} \kappa(t,x)>0.
\end{equation}
Then there exist constants $c_{1,p_1}(d,p_1,m),C_{1,p_1}(d,p_1,p_2,s,m),\tilde C_{1,p_1}(d,p_1,s,m)$ such that the solution to \eqref{lineareq:energy} satisfies for all $t\in[0,T]:$
\begin{equation}\label{linearest:energy,p}
\begin{split}
\|\theta\|_{\tilde L^\infty_t(B^{s}_{p_1,1})}+c_{1,p_1}\|\theta\|_{L^1_t(B^{s+2}_{p_1,1})}
\leq& e^{C_{1,p_1}(\|\nabla q\|_{L^{1}_t(B^{d/{p_2}}_{p_2,1})}  +\|\nabla \kappa\|_{L^2_t(B^{d/{p_1}}_{p_1,1})}^2)}\\
&\times(\|\theta_0\|_{B^{s}_{p_1,1}}+\tilde C_{1,p_1}\|\Delta_{-1}\theta\|_{L^1_t(L^{p_1})}
+\|f\|_{L^1_t(B^{s}_{p_1,1})}).
\end{split}
\end{equation}
\end{prop}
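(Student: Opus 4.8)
The plan is to follow the Littlewood--Paley strategy used for Proposition~\ref{th:TDnu}, localizing \eqref{lineareq:energy} and performing an $L^{p_1}$ energy estimate on each dyadic block; the new features are the variable coefficient $\kappa$ in the second-order operator and the extra commutator it generates. Applying $\dj$ to \eqref{lineareq:energy}, setting $\theta_j:=\dj\theta$ and using $\div q=0$ to write the convection commutator in divergence form, one gets
\begin{equation*}
\d_t\theta_j+q\cdot\nabla\theta_j-\div(\kappa\nabla\theta_j)=\dj f-\sum_k\d_k\bigl([\dj,q^k]\theta\bigr)+\sum_k\d_k\bigl([\dj,\kappa]\d_k\theta\bigr).
\end{equation*}
Multiplying by $|\theta_j|^{p_1-2}\theta_j$ and integrating over $\R^d$, the term $\int q\cdot\nabla\theta_j\,|\theta_j|^{p_1-2}\theta_j\,dx$ vanishes (again by $\div q=0$), and the divergence structure of the diffusion term gives, after a single integration by parts and with \emph{no} surviving $\nabla\kappa$ contribution,
\begin{equation*}
\int_{\R^d}\div(\kappa\nabla\theta_j)\,|\theta_j|^{p_1-2}\theta_j\,dx=-(p_1-1)\int_{\R^d}\kappa\,|\theta_j|^{p_1-2}|\nabla\theta_j|^2\,dx\le -(p_1-1)\,m\int_{\R^d}|\theta_j|^{p_1-2}|\nabla\theta_j|^2\,dx.
\end{equation*}
For $j\ge0$ the right-hand side is $\le-c\,m\,2^{2j}\|\theta_j\|_{L^{p_1}}^{p_1}$ by the generalized Bernstein inequality (see \cite{Danchin01local}), while for $j=-1$ this contribution is nonpositive and is simply discarded. (To make this rigorous one works first with $(\eps^2+|\theta_j|^2)^{p_1/2}$ and lets $\eps\to0$.) Thus, for $j\ge0$,
\begin{equation*}
\frac{d}{dt}\|\theta_j\|_{L^{p_1}}+c\,m\,2^{2j}\|\theta_j\|_{L^{p_1}}\lesssim\|\dj f\|_{L^{p_1}}+\Bigl\|\sum_k\d_k\bigl([\dj,q^k]\theta\bigr)\Bigr\|_{L^{p_1}}+\Bigl\|\sum_k\d_k\bigl([\dj,\kappa]\d_k\theta\bigr)\Bigr\|_{L^{p_1}},
\end{equation*}
with the parabolic term dropped for $j=-1$.

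Next I would multiply by $2^{js}$, sum over $j\ge-1$, and invoke Proposition~\ref{prop:commutator Besov} in the form \eqref{est:com4} — the version allowing one derivative \emph{outside} the commutator — together with the limit case \eqref{est:com2} at the endpoint $s=d/p_1$. The point of the divergence form is that, because $\div q=0$, the convection commutator also falls into this ``derivative outside'' framework, which buys one extra unit of negative regularity; this is precisely why \eqref{est:com4} applies on the whole range \eqref{eq:p1,p2}. The convection commutator is then bounded by $\lesssim\|\nabla q\|_{B^{d/p_2}_{p_2,1}}\|\theta\|_{B^{s}_{p_1,1}}$, a quantity of Gronwall type. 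The conductivity commutator is the delicate one: \eqref{est:com4} gives $\lesssim\|\nabla\kappa\|_{B^{d/p_1}_{p_1,1}}\|\nabla\theta\|_{B^{s}_{p_1,1}}\lesssim\|\nabla\kappa\|_{B^{d/p_1}_{p_1,1}}\|\theta\|_{B^{s+1}_{p_1,1}}$, which is one derivative too high to be absorbed by the parabolic gain. Here I would interpolate, $\|\theta\|_{B^{s+1}_{p_1,1}}\le\|\theta\|_{B^{s}_{p_1,1}}^{1/2}\|\theta\|_{B^{s+2}_{p_1,1}}^{1/2}$, and apply Young's inequality \emph{pointwise in time}:
\begin{equation*}
\|\nabla\kappa\|_{B^{d/p_1}_{p_1,1}}\|\theta\|_{B^{s+1}_{p_1,1}}\le\frac{c\,m}{2}\,\|\theta\|_{B^{s+2}_{p_1,1}}+\frac{C}{m}\,\|\nabla\kappa\|_{B^{d/p_1}_{p_1,1}}^{2}\,\|\theta\|_{B^{s}_{p_1,1}}.
\end{equation*}
After integrating in time the first term is absorbed by the left-hand side and the second is again of Gronwall type, with $\int_0^t\|\nabla\kappa\|_{B^{d/p_1}_{p_1,1}}^2\,d\tau=\|\nabla\kappa\|_{L^2_t(B^{d/p_1}_{p_1,1})}^2$. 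This is exactly where the $L^2$-in-time norm of $\nabla\kappa$, and its square, enter the exponential in \eqref{linearest:energy,p}.

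It remains to deal with the low-frequency block $\Delta_{-1}\theta$, which enjoys no parabolic smoothing (and for which the commutator estimates degrade as $s$ approaches its lower bound in \eqref{eq:p1,p2}): its share of $\|\theta\|_{L^1_t(B^{s+2}_{p_1,1})}$ is simply moved to the right-hand side as the term $\tilde C_{1,p_1}\|\Delta_{-1}\theta\|_{L^1_t(L^{p_1})}$, which is harmless because, when Proposition~\ref{p:energy} is applied to the nonlinear system on a short time interval, that term is made small by shrinking $T$. Collecting the block inequalities, integrating in time and applying Gronwall's lemma with the increasing source $a(t)=\|\theta_0\|_{B^{s}_{p_1,1}}+\|f\|_{L^1_t(B^{s}_{p_1,1})}+\tilde C_{1,p_1}\|\Delta_{-1}\theta\|_{L^1_t(L^{p_1})}$ and the integrating factor $\exp\bigl(C_{1,p_1}(\|\nabla q\|_{L^1_t(B^{d/p_2}_{p_2,1})}+\|\nabla\kappa\|_{L^2_t(B^{d/p_1}_{p_1,1})}^{2})\bigr)$ yields the bound for $\|\theta\|_{\tilde L^\infty_t(B^{s}_{p_1,1})}$; feeding it back into the inequality recovers the $L^1_t(B^{s+2}_{p_1,1})$ norm and gives \eqref{linearest:energy,p}. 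Performing the energy estimate block-by-block \emph{before} summing in $j$ (rather than the converse) is what yields the $\tilde L^\infty_t$ norm instead of $L^\infty_t$. The main obstacle throughout is the variable-coefficient second-order operator $\div(\kappa\nabla\cdot)$: extracting its parabolic smoothing — handled cleanly by the divergence structure in the energy identity together with the generalized Bernstein inequality — and, above all, controlling the commutator $[\dj,\div(\kappa\nabla)]\theta$ without a loss of derivatives, which forces the interpolation-plus-Young step and hence the quadratic $L^2_t$ norm of $\nabla\kappa$; one also has to check that the conditions \eqref{eq:p1,p2} are exactly those under which Proposition~\ref{prop:commutator Besov} and the generalized Bernstein inequality can be applied.
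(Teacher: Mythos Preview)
Your approach is essentially the paper's: localize with $\dj$, perform an $L^{p_1}$ energy estimate exploiting the divergence structure of $\div(\kappa\nabla\theta_j)$ and the generalized Bernstein inequality \eqref{equality:bernstein}, bound the two commutators via Proposition~\ref{prop:commutator Besov}, interpolate $\|\theta\|_{B^{s+1}_{p_1,1}}\le\|\theta\|_{B^{s}_{p_1,1}}^{1/2}\|\theta\|_{B^{s+2}_{p_1,1}}^{1/2}$ and apply Young's inequality to absorb the $\kappa$-commutator, then Gronwall. All of this is correct and matches the paper.

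There is, however, one gap in the range of $s$ you recover. You write the convection commutator \emph{only} in divergence form $\sum_k\d_k([\dj,q^k]\theta)$ and apply \eqref{est:com4} at level $s+1$ with $\nu=1$. That indeed gains one unit at the bottom, but the upper restriction coming from \eqref{eq:condcom} then reads $s+1\le 1+d\min\{1/p_1,1/p_2\}$, i.e.\ $s\le d\min\{1/p_1,1/p_2\}$, whereas \eqref{eq:p1,p2} allows $s\le d\min\{1/p_1,1/p_2+1/d\}$. When $p_1<p_2$ these differ, and in particular the value $s=d/p_1$ actually used later in the proof of Theorem~\ref{th:local} is not covered by your argument in that regime. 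The paper cures this by treating the convection term primarily in the \emph{non}-divergence form $R^1_j=[q,\dj]\cdot\nabla\theta$ via \eqref{est:com3}, which yields the upper bound $s\le 1+d\min\{1/p_1,1/p_2\}$, and only switches to the divergence form (your identity $R^1_j=\div([q,\dj]\theta)$) to extend the \emph{lower} bound down to $-1-d\min\{1/p'_1,1/p_2\}$. Using both forms gives exactly the range \eqref{eq:p1,p2}; using the divergence form alone does not.
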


\begin{proof}
As a warm up, we focus on the special case $p_1=p_2=2$ and $s\in(-d/2,d/2]$
which may be achieved by classical energy arguments. 
Applying $\dj$ to (\ref{lineareq:energy}) yields for all $j\geq -1$,
\begin{equation}\label{eq:thetaj}\d_t\theta_j+q\cdot\nabla\theta_j
-\div(\kappa\nabla\theta_j)
=f_j+R^1_j-\div R^2_j,\end{equation}
where
$$
\theta_j=\dj \theta,
\quad f_j=\dj f,
\quad R^1_j=[q,\dj]\cdot\nabla\theta\ \hbox{ and }\
R^2_j=[\kappa,\dj]\nabla \theta.
$$

Taking the $L^2$ inner product of the above equation with $\theta_j$ and integrating by parts (recall that  $\div q=0$), we get
$$\frac{1}{2}\frac{d}{dt}\|\theta_j\|^2_{L^2}
+\Int \kappa|\nabla\theta_j|^2
\leq \|\theta_j\|_{L^2}(\|f_j\|_{L^2}+\|R^1_j\|_{L^2}+\|\div R^2_j\|_{L^2}).$$

Notice that we have  $\|\nabla\theta_j\|_{L^2}\approx 2^j\|\theta_j\|_{L^2}$ for $j\geq 0$. Hence, dividing formally both sides of the inequality by $\|\theta_j\|_{L^2}$ and integrating with respect to the time variable, we get for  some constant $c_1$ depending only on $d,$
\begin{align*}
\|\theta_j\|_{L^\infty_t(L^2)}+c_1m2^{2j}\|\theta_j\|_{L^1_t(L^2)}\leq &
\|(\theta_0)_j\|_{L^2}+\delta^{-1}_{j}c_1m2^{2j}\|\Delta_{-1}\theta\|_{L^1_t(L^2)}\\
&+\|f_j\|_{L^1_t(L^2)}+\|R^1_j\|_{L^1_t(L^2)}+\|\div R^2_j\|_{L^1_t(L^2)},
\end{align*}

where
$$
\delta^{-1}_j=1\hbox{ if }j=-1\quad\hbox{and}\quad 
\delta^{-1}_j=0\hbox{ if }j\not=-1.
$$

Applying  Inequality \eqref{est:com3} with regularity index $s$
and Inequality \eqref{est:com4} with regularity index $s+1$ and $\nu=1$
yields
\begin{align*}
\|R^1_j\|_{L^1_t(L^2)}\lesssim& \,2^{-js}c_j\Int^t_0 \|\nabla q\|_{ B^{d/2}_{2,1}}\|\nabla\theta\|_{ B^{s-1}_{2,1}}\, dt'\quad\hbox{if }\ -d/2<s\leq d/2+1,\\
\|\div R^2_j\|_{L^1_t(L^2)} \lesssim& \,2^{-js}c_j\Int^t_0\|\nabla \kappa\|_{B^{d/2}_{2,1}}\| \nabla\theta\|_{B^{s}_{2,1}}\, dt'
\quad\hbox{if }\ -d/2-1<s\leq d/2.
\end{align*}

Now multiplying both sides by $2^{js}$, summing up over $j$ and taking advantage of the interpolation inequality $\|\cdot\|_{B^{s+1}_{2,1}}\lesssim \|\cdot\|_{B^s_{2,1}}^{1/2}\|\cdot\|_{B^{s+2}_{2,1}}^{1/2}$ in Proposition \ref{p:Besov} yields
\begin{align*}
\|\theta\|_{\tilde L^\infty_t({B}^{s}_{2,1})}+c_1m\|\theta\|_{  L^1_t({B}^{s+2}_{2,1})}\leq&
\|\theta_0\|_{ B^{s}_{2,1}}+c_1m\|\Delta_{-1}\theta\|_{L^1_t(L^2)}
+\|f\|_{  L^1_t( {B}^{s}_{2,1})}\\
&+C_1\Int^t_0(\|\nabla q\|_{ {B}^{d/2}_{2,1}} 
 +\|\nabla \kappa\|_{B^{d/2}_{2,1}}^2)\|\theta\|_{  {B}^{s}_{2,1}}\,dt'.
\end{align*}
Then applying Gronwall's inequality leads to Inequality \eqref{linearest:energy,p}.
\medbreak
To treat the general  case  $1<p_1<\infty$  we multiply
 \eqref{eq:thetaj} by 
 $|\theta_j|^{p_1-2}\theta_j.$ We arrive at
\begin{align*}
\frac{1}{p_1}\frac{d}{dt}\Int |\theta_j|^{p_1}\,dx+(p_1-1)\Int \kappa |\theta_j|^{p_1-2}|\nabla \theta_j|^2\,dx
\leq &\|\theta_j\|_{L^{p_1}}^{p_1-1}(\|f_j\|_{L^{p_1}}+\|R^1_j\|_{L^{p_1}}+\|\div R^2_j\|_{L^{p_1}}).
\end{align*}

Next, we use (bearing in mind that $1<p_1<\infty$) the following Bernstein type inequality (see Appendix B in \cite{Danchin10euler}):
\begin{equation}\label{equality:bernstein}
\Int |\theta_j|^{p_1-2}|\nabla\theta_j|^2
\gtrsim 2^{2j}\int |\theta_j|^{p_1}\,dx\quad\hbox{for }\  j\geq 0.
\end{equation}

Hence we get
\begin{align*}
\frac{d}{dt}\|\theta_j\|_{L^{p_1}}^{p_1}+2^{2j}m\|\theta_j\|_{L^{p_1}}^{p_1}
&\lesssim  \|\theta_j\|_{L^{p_1}}^{p_1-1}(\|f_j\|_{L^{p_1}}+\|R^1_j\|_{L^{p_1}}
+\|\div R^2_j\|_{L^{p_1}}).
\end{align*}
Therefore dividing both sides by $\|\theta_j\|_{L^{p_1}}^{p_1-1}$
and using that, according to \eqref{est:com3} and \eqref{est:com4},
\begin{align*}
\|R^1_j\|_{L^{p_1}}\lesssim& \,2^{-js}c_j\|\nabla q\|_{ B^{d/{p_2}}_{p_2,1}}\|\nabla\theta\|_{ B^{s-1}_{p_1,1}}\quad\hbox{if }\ -d\min\biggl(\frac1{p'_1},\frac1{p_2}\biggr)<s\leq1+d\min\biggl(\frac1{p_1},\frac1{p_2}\biggr),\\
\|\div R^2_j\|_{L^{p_1}} \lesssim& \,2^{-js}c_j\|\nabla \kappa\|_{B^{d/{p_1}}_{p_1,1}}
\|\nabla\theta\|_{B^{s}_{p_1,1}}\quad\hbox{if }\ -d\min\biggl(\frac1{p'_1}\frac1{p_1}\biggr)
<s+1\leq1+\frac d{p_1},
\end{align*}
 integrating in  time, multiplying both sides with $2^{js}$, summing up over $j\in \Z$  and performing an interpolation inequality, one arrives at
\begin{align*}
\|\theta\|_{\tilde L^\infty_t(B^{s}_{p_1,1})}+c_{1,p_1}m\|\theta\|_{L^1_t(B^{s+2}_{p_1,1})}
\leq& \|\theta_0\|_{B^{s}_{p_1,1}}+c_{1,p_1}2^{-(s+2)}m\|\Delta_{-1}\theta\|_{L^1_t(L^{p_1})}
+\|f\|_{L^1_t(B^{s}_{p_1,1})}\\
&+C_{1,p_1}\Int^t_0 (\|\nabla q\|_{B^{d/{p_2}}_{p_2,1}} +\|\nabla \kappa\|_{B^{d/{p_1}}_{p_1,1}}^2)\| \theta\|_{B^{s}_{p_1,1}}\, dt',
\end{align*}
which yields (\ref{linearest:energy,p}) by Gronwall inequality, except
in the case where $s$ is too negative.
\smallbreak
To improve the condition over $s$ for $s$ negative, it suffices to use the 
fact that, owing to $\div q=0,$ one has
$$
R_j^1=\div([q,\dj]\theta).
$$
Then one may  apply Inequality \eqref{est:com4} to $\div[q,\dj]\theta$
with $s+1$ instead of $s.$ The details are left to the reader.
\end{proof}

 \begin{rem}
 Let us further remark that 
 $$
\div  R_j^2=[\nabla \kappa,\Delta_j]\cdot\nabla\theta+[\kappa,\Delta_j]\Delta\theta.
$$
This decomposition allows to improve the condition \eqref{eq:p1,p2} for 
positive $s$:  if  we only assume that $s\leq 1+d\min(1/p_1,1/p_2)$
then Inequality \eqref{linearest:energy,p} holds true
 with the additional term 
 $\|\nabla^2 \kappa\|_{L^1_t(B^{d/{p_1}}_{p_1,1})}$ in the exponential. 
 As only the case $s=d/p_1$  is needed for proving Theorem \ref{th:local},
we do not provide more details here. 

Note also that, for 
 $s=d/{p_1}$, Condition  \eqref{eq:p1,p2} holds if and only if 
$1/{p_1}\leq 1/{p_2}+1/d$.
\end{rem}

%%%%%%%%%%%%%%%%%%%%%%

In the fully nonhomogeneous case, the appropriate linearized 
momentum equation turns out to be
  \begin{equation}\label{lineareq:momentum}\left\{\begin{array}{ccc}
\d_{t}u+q\cdot\nabla u-\div(\mu\nabla u)+P\nabla Q&=&h,\\
\div u&=&0,\\
u|_{t=0}&=&u_0,
\end{array}\right.
\end{equation}
where $q$ is a given divergence free vector-field, and $(\mu,P)$ are
given positive functions. 
Let us 
first consider the case $p_1=p_2=2$ which may be handled by standard
energy arguments.

\begin{prop}\label{p:momentum}
Let $P,$ $\mu,$ $q$, $h,$ $u_0$ be smooth and decay sufficiently at infinity with $\div q=0.$
Let  $(u,\nabla Q)$ satisfy \eqref{lineareq:momentum} on $[0,T]\times\R^d.$  
Let $s\in[0,d/2].$ Suppose that, for some positive constants $m$ and $M,$
we have
\begin{equation}\label{condition:P}
\|\nabla P\|_{L^\infty_T(B^{d/2-1}_{2,1})}+\|P\|_{L^\infty([0,T]\times\R^d)}\leq M
\quad\hbox{and}\quad  \min(P,\mu)\geq m.
\end{equation}
There exists a constant $c_P=c_P(d,s)$ such that, 
if for some integer $N,$ one has
 \begin{equation}\label{condition:N}
\inf_{x\in \R^d,t\in [0,T]}S_NP(t,x)\geq m/2,\quad \|P-S_NP\|_{L^\infty_T(B^{d/2}_{2,1})}\leq c_Pm,
 \end{equation}
then there exist constants $c_2(d,m)$, $C_2(d,s,m,M,N)$, $\tilde C_2(d,s,m)$, $C_3(d,s,m,M,N)$,  such that for all $t\in[0,T],$
\begin{equation}\label{linearest:momentum}\begin{split}
\|u\|_{\tilde L^\infty_t( {B}^{s}_{2,1})}+c_2\|u\|_{ L^1_t( {B}^{s+2}_{2,1})}
\leq& e^{C_2(\|\nabla q\|_{L^1_t( {B}^{d/2}_{2,1})}+\|\nabla\mu\|_{L^2_t( B^{d/2}_{2,1})}^2 )}\\
&\times\bigl(\|u_0\|_{  B^{s}_{2,1}}+\tilde C_2\|\Delta_{-1}u\|_{L^1_t(L^2)}+C_2\|h\|_{L^1_t( {B}^{s}_{2,1})}\bigr),
\end{split}
\end{equation}

\begin{equation}\label{linearest:Q}
 \|\nabla Q\|_{L^1_t( {B}^{s}_{2,1})}\leq C_3
 \int_0^t\biggl(\|h\|_{B^s_{2,1}}+\|\nabla q\|_{{B}^{d/2}_{2,1}}\|u\|_{B^s_{2,1}}
+\|\nabla\mu\|_{B^{d/2}_{2,1}}\|\nabla u\|_{{B}^{s}_{2,1}}\biggr)\,d\tau.
\end{equation}
\end{prop}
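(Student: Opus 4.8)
The plan is to split the proof into two independent pieces: an \emph{elliptic} estimate for $\nabla Q$, which yields \eqref{linearest:Q}, and a \emph{parabolic} energy estimate for $u$, which yields \eqref{linearest:momentum}. Once the pressure is controlled, the second piece is obtained almost verbatim as in the warm-up $p_1=p_2=2$ of Proposition \ref{p:energy}; the estimate for $\nabla Q$ is the heart of the matter.

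For the pressure, I would apply $\div$ to the momentum equation in \eqref{lineareq:momentum}. Since $\div u=0$ forces $\div\partial_tu=0$, and since $\div q=0$ gives both $\div(q\cdot\nabla u)=\nabla q:\nabla u=\div\bigl((u\cdot\nabla)q\bigr)$ and $\div\bigl(\div(\mu\nabla u)\bigr)=\div(\nabla u\cdot\nabla\mu)$ (no second derivatives of $u$ or of $\mu$ surviving), the pressure solves the divergence-form elliptic equation
\[
\div(P\nabla Q)=\div L,\qquad L:=h-(u\cdot\nabla)q+\nabla u\cdot\nabla\mu,
\]
which has exactly the structure of \eqref{equation:Q}. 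The product laws of Proposition \ref{prop:product Besov} (together with Proposition \ref{p:paraproduct}) then give $\|L\|_{B^s_{2,1}}\lesssim\|h\|_{B^s_{2,1}}+\|\nabla q\|_{B^{d/2}_{2,1}}\|u\|_{B^s_{2,1}}+\|\nabla\mu\|_{B^{d/2}_{2,1}}\|\nabla u\|_{B^s_{2,1}}$ for $s\in[0,d/2]$, so it only remains to prove $\|\nabla Q\|_{B^s_{2,1}}\lesssim\|L\|_{B^s_{2,1}}$, with constant depending on $d,s,m,M,N$, and then integrate in time. Testing the localized equation $\div(P\nabla Q_j)=\div\bigl(L_j-[\Delta_j,P]\nabla Q\bigr)$ against $Q_j=\Delta_jQ$ and using $P\geq m$ yields $\|\nabla Q_j\|_{L^2}\lesssim m^{-1}\bigl(\|L_j\|_{L^2}+\|[\Delta_j,P]\nabla Q\|_{L^2}\bigr)$, while testing the full equation against $Q$ gives $\|\nabla Q\|_{L^2}\leq m^{-1}\|L\|_{L^2}\leq m^{-1}\|L\|_{B^s_{2,1}}$ since $s\geq0$. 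Summing the weighted bounds over $j$, I would split $P=S_NP+(P-S_NP)$: by Proposition \ref{prop:commutator Besov} the contribution of $P-S_NP$ is $\lesssim\|P-S_NP\|_{B^{d/2}_{2,1}}\|\nabla Q\|_{B^s_{2,1}}\leq Cc_Pm\|\nabla Q\|_{B^s_{2,1}}$, hence absorbed by the left-hand side once $c_P$ is small by \eqref{condition:N}, whereas the spectrally truncated coefficient $S_NP$ produces a term $\lesssim C(N,M)\|\nabla Q\|_{B^{s-1}_{2,1}}$ that one splits at a frequency threshold $j_0=j_0(m,M,N,s)$ — the high part carries a factor $2^{-j_0}$ and is absorbed, the finitely many low blocks being bounded by $\|\nabla Q\|_{L^2}\lesssim\|L\|_{B^s_{2,1}}$. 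This gives \eqref{linearest:Q}, and \emph{closing this elliptic estimate — keeping the commutators within the admissible range $s\in[0,d/2]$ and using the smallness \eqref{condition:N} to absorb the high frequencies of $P$ — is the step I expect to be the main obstacle.}

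For the velocity I would mimic Proposition \ref{p:energy}: apply $\Delta_j$ to \eqref{lineareq:momentum}, take the $L^2$ inner product with $u_j$, integrate by parts using $\div q=0$ (so $\int q\cdot\nabla u_j\cdot u_j=0$) and the positivity of $\mu$, and treat $h-P\nabla Q$ together with $R^1_j=[q,\Delta_j]\cdot\nabla u$ and $\div R^2_j$ ($R^2_j=[\mu,\Delta_j]\nabla u$) as the source. Using $\|\nabla u_j\|_{L^2}\gtrsim2^j\|u_j\|_{L^2}$ for $j\geq0$, adding $c_2m\|u_{-1}\|_{L^2}$ to both sides when $j=-1$ (the origin of the $\tilde C_2\|\Delta_{-1}u\|_{L^1_t(L^2)}$ term), dividing by $\|u_j\|_{L^2}$, integrating in time, multiplying by $2^{js}$ and summing, one arrives at
\[
\|u\|_{\tilde L^\infty_t(B^s_{2,1})}+c_2m\|u\|_{L^1_t(B^{s+2}_{2,1})}\lesssim\|u_0\|_{B^s_{2,1}}+\tilde C_2m\|\Delta_{-1}u\|_{L^1_t(L^2)}+\|h\|_{L^1_t(B^s_{2,1})}+\|P\nabla Q\|_{L^1_t(B^s_{2,1})}+(\text{commutators}),
\]
with $\sum_j2^{js}\|R^1_j\|_{L^2}\lesssim\|\nabla q\|_{B^{d/2}_{2,1}}\|u\|_{B^s_{2,1}}$ and $\sum_j2^{js}\|\div R^2_j\|_{L^2}\lesssim\|\nabla\mu\|_{B^{d/2}_{2,1}}\|\nabla u\|_{B^s_{2,1}}$ from Proposition \ref{prop:commutator Besov}. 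Bounding $\|P\nabla Q\|_{B^s_{2,1}}\lesssim M\|\nabla Q\|_{B^s_{2,1}}$ by the product laws and inserting \eqref{linearest:Q}, the only genuinely second-order contributions produced are $\|\nabla\mu\|_{B^{d/2}_{2,1}}\|\nabla u\|_{B^s_{2,1}}=\|\nabla\mu\|_{B^{d/2}_{2,1}}\|u\|_{B^{s+1}_{2,1}}$, which are absorbed via $\|u\|_{B^{s+1}_{2,1}}\lesssim\|u\|_{B^s_{2,1}}^{1/2}\|u\|_{B^{s+2}_{2,1}}^{1/2}$ and Young's inequality (a small multiple of $\|u\|_{L^1_t(B^{s+2}_{2,1})}$ is sent to the left-hand side, and a factor $\|\nabla\mu\|_{B^{d/2}_{2,1}}^2\in L^1_t$ is left in front of $\|u\|_{B^s_{2,1}}$), while the terms $\|\nabla q\|_{B^{d/2}_{2,1}}\|u\|_{B^s_{2,1}}$ are kept as they are; Gronwall's lemma then yields \eqref{linearest:momentum}. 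The endpoint $s=d/2$ and the borderline exponents in the product and commutator estimates are treated, as in the proof of Proposition \ref{p:energy}, by invoking the limit versions of Propositions \ref{p:paraproduct}, \ref{prop:product Besov} and \ref{prop:commutator Besov}, the routine details being left to the reader.
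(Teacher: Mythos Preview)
Your proposal is correct and follows essentially the same architecture as the paper: derive the elliptic equation $\div(P\nabla Q)=\div L$, localize, control the commutator $[\Delta_j,P]\nabla Q$ via the decomposition $P=S_NP+(P-S_NP)$ and absorb the rough part by the smallness \eqref{condition:N}, then run the parabolic energy argument of Proposition \ref{p:energy} with $P\nabla Q$ treated as a source and close by interpolation/Young and Gronwall.

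The one genuine technical difference is how you close the $S_NP$ contribution. The paper takes $\nu\in(0,s)$ in \eqref{est:com1} to land on $\|\nabla Q\|_{B^{s-\nu}_{2,1}}$, interpolates this between $\|\nabla Q\|_{L^2}$ and $\|\nabla Q\|_{B^{s}_{2,1}}$, and absorbs via Young; this forces a separate treatment of $s=0$ where the interpolation degenerates. You instead take $\nu=1$, get $\|\nabla Q\|_{B^{s-1}_{2,1}}$, and split at a frequency threshold $j_0$: the high part picks up $2^{-j_0}$ and is absorbed, the finitely many low blocks are controlled by $\|\nabla Q\|_{L^2}\leq m^{-1}\|L\|_{L^2}$. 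Your variant is slightly more elementary and handles $s\in[0,d/2]$ uniformly, at the cost of a constant depending on $j_0=j_0(m,M,N,s)$; the paper's interpolation gives the same dependence in the end. One small point: your bound $\|P\nabla Q\|_{B^s_{2,1}}\lesssim M\|\nabla Q\|_{B^s_{2,1}}$ needs the splitting $P=\Delta_{-1}P+(\mathrm{Id}-\Delta_{-1})P$ (since only $\nabla P\in B^{d/2-1}_{2,1}$ and $P\in L^\infty$ are assumed, not $P\in B^{d/2}_{2,1}$), exactly as the paper does.
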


\begin{proof}
Following the proof of Proposition \ref{p:energy}, we apply $\dj$ to 
(\ref{lineareq:momentum}). This  yields
$$\d_t u_j+q\cdot\nabla u_j-\div(\mu\nabla u_j)
=h_j+R^1_j-\div R^2_j-\dj( P\nabla Q),$$
where
$$u_j:=\dj u,\quad h_j:=\dj h,\quad R^1_j:=[q,\dj]\cdot\nabla u,\quad  
R^2_j:=[\mu,\dj]\cdot\nabla u.$$

As above, we thus get if $-d/2<s\leq d/2,$
\begin{eqnarray}
\|u\|_{\tilde L^\infty_t( {B}^{s}_{2,1})}+c_2(d)m\|u\|_{L^1_t( {B}^{s+2}_{2,1})}\leq
\|u_0\|_{ B^{s}_{2,1}}+\tilde C_2(c_2,s)\|\Delta_{-1}u\|_{L^1_t(L^2)}
+\|h\|_{L^1_t( {B}^{s}_{2,1})}\nonumber\\\label{eq:u}
+C(d,s,m)\Int^t_0\bigl(\|\nabla q\|_{ {B}^{d/2}_{2,1}}\|u\|_{  {B}^{s}_{2,1}}
+\|\nabla\mu\|_{ {B}^{d/2}_{2,1}}^2
\|\nabla u\|_{  {B}^{s}_{2,1}}\bigr)\,dt'+\|P\nabla Q\|_{L^1_t( {B}^{s}_{2,1})}.
\end{eqnarray}

We now have to bound $\nabla Q$. Applying the divergence operator to the first equation, we then arrive at the following  elliptic equation 
\emph{with variable coefficients}\footnote{Here we use that $\div(q\cdot\nabla u)=\div(u\cdot\nabla q)$ and $\div\bigl(\div(\mu\nabla u)\bigr)=\div\bigl(\nabla u\cdot\nabla\mu\bigr)$ owing
to $\div u=\div q=0$.}: 
\begin{equation}\label{lineareq:Q}
\div (P\nabla Q)=\div L \quad\textrm{ with }\quad L:=-u\cdot \nabla q
+\nabla\mu\cdot (\nabla u)^{T}+h.
\end{equation}
First we take the $L^2$ inner product to the equation (\ref{lineareq:Q}) with $Q$ to get
\begin{equation}\label{eq:LM}
m\|\nabla Q\|_{L^2}\leq \|L\|_{L^2}.
\end{equation}

Next, applying $\dj$ to (\ref{lineareq:Q}) yields (with obvious notation)
$$
\div(P\nabla Q_j)=\div L_j+\div ([P,\dj]\nabla Q).
$$
Hence, taking the $L^2$ inner product with $Q_j$  and  integrating
by parts yields
$$
m\|\nabla Q_j\|_{L^2}\leq \|L_j\|_{L^2}+\|[P,\dj]\nabla Q\|_{L^2}. 
$$
So using the commutator estimate \eqref{est:com1}, 
we easily get if  $-d/2<\nu\leq 1$ and $-d/2<s\leq \nu+d/2,$
\begin{equation}\label{linearestimate:Q,general}
m\|\nabla Q\|_{L^1_t(B^s_{2,1})}\leq \|L\|_{L^1_t(B^s_{2,1})}
+C_Q(d,s,\nu)\|\nabla P\|_{L^\infty_t(B^{d/2+\nu-1}_{2,1})}\|\nabla Q\|_{L^1_t(B^{s-\nu}_{2,1})}.
\end{equation}

Now we consider two cases:
\begin{itemize}
\item Case $0<s\leq d/2.$
Let us first assume that  $\nabla P$ has some extra regularity: 
suppose for instance that it belongs to
 ${ L^\infty_T(B^{d/2+\nu-1}_{2,1})}$ for some $\nu$ 
 such that $\nu+d/2\geq s>\nu>0.$
As  $\|\cdot\|_{B^0_{2,2}}=\|\cdot\|_{L^2}$  we arrive (by interpolation) at
\begin{equation}\label{eq:interpo1}
\|\nabla Q\|_{B^{s-\nu}_{2,1}}\lesssim\|\nabla Q\|_{L^2}^{\nu/s}
\|\nabla Q\|_{B^{s}_{2,1}}^{1-\nu/s}\lesssim \|L\|_{L^2}^{\nu/s}\|\nabla Q\|_{B^{s}_{2,1}}^{1-\nu/s}.
\end{equation}
Hence (\ref{linearestimate:Q,general}) implies that
\begin{equation}\label{linearestimate:Q,s>0}
\|\nabla Q\|_{L^1_t(B^s_{2,1})}\leq C(d,s,\nu,m)
(1+\|\nabla P\|_{L^\infty_t(B^{d/2+\nu-1}_{2,1})})^{s/\nu}\|L\|_{L^1_t(B^s_{2,1})}.
\end{equation}

Now, if $P$ satisfies only Conditions \eqref{condition:P}
and \eqref{condition:N} then  we decompose it
 into
$$
P=P_N+(P-P_N) \quad\hbox{with } P_N:=S_NP.
$$
Note that  $\nabla P_N\in H^\infty$ and that the equation for $Q$ recasts in 
$$
\div(P_N\nabla Q)=\div (L+ E_N),\quad \textrm{where}\quad E_N=(P_N-P)\cdot \nabla Q.
$$

Therefore, following the  procedure  leading to \eqref{linearestimate:Q,general}
and bearing  the first part of Condition \eqref{condition:N} in mind, yields 
\begin{equation}\label{eq:Q1}
\frac m2\|\nabla Q\|_{ L^1_t(B^s_{2,1})}\leq  \|L+E_N\|_{L^1_t(B^s_{2,1})}
+C_Q(d,s,\nu)\|\nabla P_N\|_{L^\infty_t(B^{d/2+\nu-1}_{2,1})}\|\nabla Q\|_{L^1_t(B^{s-\nu}_{2,1})}.
\end{equation}

We notice that for $-d/2<s\leq d/2$,
$$
\begin{array}{lll}
\|\nabla P_N\|_{ L^\infty_t(B^{d/2+\nu-1}_{2,1})}&\leq&
C_P(d)2^{N\nu}\|\nabla P\|_{  L^\infty_t(B^{d/2-1}_{2,1})},\\[1ex]
\|E_N\|_{  L^1_t(B^{s}_{2,1})}&\leq& C_P(d,s)\|P_N-P\|_{ L^\infty_t(B^{d/2}_{2,1})}
\|\nabla Q\|_{ L^1_t(B^{s}_{2,1})},\end{array}
$$
hence the term pertaining to $E_N$ may be absorbed by the left-hand side
of \eqref{eq:Q1} if $c_P$ is small enough in  \eqref{condition:N}. Then using the same
interpolation argument as above,  we
end up with 
\begin{equation}\label{eq:Q2}
 m\|\nabla Q\|_{ L^1_t(B^s_{2,1})}\leq  C_Q(d,s,\nu) 2^{N\nu}
 \bigl(1+\|\nabla P\|_{L^\infty_t(B^{d/2-1}_{2,1})}\bigr)^{s/\nu}
  \|L\|_{L^1_t(B^{s}_{2,1})}.
\end{equation}

Now, in order to complete the proof of Inequality (\ref{linearest:Q}), it
is only a matter of using the  product estimates (ii) stated
in Proposition \ref{prop:product Besov} for bounding $L,$ which implies that
$$
\|L\|_{B^s_{2,1}}\lesssim\|\nabla q\|_{B^{\frac d2}_{2,1}}\|u\|_{B^s_{2,1}}
+\|\nabla\mu\|_{B^{\frac d2}_{2,1}}\|\nabla u\|_{B^s_{2,1}}+\|h\|_{B^s_{2,1}}
\quad\hbox{if }\ -d/2<s\leq d/2.
$$

\item Case $s=0$: in this case,  the interpolation inequality \eqref{eq:interpo1} fails,
so that we have to modify the proof accordingly. 
First we apply  Inequality (\ref{linearestimate:Q,general}) for some  $0<\nu< 1$, 
and Inequality \eqref{est:com1}, to get (by virtue of \eqref{eq:LM}):
\begin{align*}
\|[P,\dj]\nabla Q\|_{L^1_t(B^0_{2,1})}\lesssim& \,c_j\|\nabla P\|_{L^\infty_t(B^{d/2+\nu-1}_{2,1})}
\|\nabla Q\|_{L^1_t(B^{0-\nu}_{2,1})}\\
\lesssim&\, c_j\|\nabla P\|_{L^\infty_t(B^{d/2+\nu-1}_{2,1})}\|L\|_{L^1_t(L^2)},
\end{align*}
 hence
\begin{equation}\label{linearestimate:Q,s=0}\begin{split}
\|\nabla Q\|_{L^1_t(B^0_{2,1})}&\lesssim \|L\|_{L^1_t(B^0_{2,1})}
+\|\nabla   P\|_{L^\infty_t(B^{d/2+\nu-1}_{2,1})}\|L\|_{ L^1_t(L^2)}\\
&\leq C(d,\nu,m)(1+\|\nabla   P\|_{L^\infty_t(B^{d/2+\nu-1}_{2,1})})\|L\|_{L^1_t(B^0_{2,1})},
\end{split}\end{equation}

which is quite similar  as (\ref{linearestimate:Q,s>0})  and hence the same procedure implies also \eqref{eq:Q2}.
\end{itemize}

In order to prove \eqref{linearest:momentum}, it suffices
to plug the above estimate for the pressure in \eqref{eq:u}.  
The main point is that, if $-d/2<s\leq d/2$ then we have
$$
\|P\nabla Q\|_{B^s_{2,1}}\lesssim\bigl(\|P\|_{L^\infty}+\|\nabla P\|_{B^{\frac d2-1}_{2,1}}\bigr)
\|\nabla Q\|_{B^s_{2,1}},
$$
as may be easily seen by  decomposing  $P\nabla Q$ into $\Delta_{-1}P\:\nabla Q+({\rm Id}-\Delta_{-1})P\:\nabla Q$ and using
 the product estimates of Proposition \ref{prop:product Besov}.
Then  Gronwall lemma leads to Inequality (\ref{linearest:momentum}).
\end{proof}

\begin{rem}
In Proposition \ref{p:momentum} we need the assumption $s\geq 0$ to get
 the  necessary $L^2$ estimate for $\nabla Q$. However, some negative indices may be achieved by duality arguments. As the corresponding estimates are not needed in our paper, 
 we here do not give more details on that issue. 
\end{rem}

%%%%%%%%%%%%%%%%%%%%%%%%%%%%%%%%%%%%%%%%%%

We now want to extend Proposition  \ref{p:momentum} to more general Besov spaces
which are not directly related to the energy space. 
To simplify the presentation, we focus on the regularity exponent $s=d/p_2-1$ which is the only one that
we will have to consider in the proof of Theorem \ref{th:local}.
Our main result reads:

\begin{prop}\label{p:momentum,p}
Let $T>0$ and $(u,\nabla Q)$ be a solution to \eqref{lineareq:momentum} on $[0,T]\times\R^d.$  
Suppose that the given functions  $P,$ $\mu,$  that 
the divergence free vector-field $q$, the initial data $u_0$ and the source term $h$ are smooth and decay at infinity. 
 Let $p_1$ be in $[1,\infty)$ and $p_2\in[2,4]$ satisfy
\begin{equation}\label{condition:p,r}
 p_2\leq\frac{2p_1}{p_1-2}\ \hbox{ if }\ p_1>2,\quad
  \frac{1}{p_2}\leq \frac{1}{p_1}+\frac{1}{d},
\quad\hbox{and}\quad (p_1,p_2)\not=(4,4)\ \hbox{ if }\ d=2.
 \end{equation}

Assume that there exist some constants $0<m<M,$ $c_{P,p_1,p_2}(d,p_1,p_2)$ small enough,
and $N\in\N$ such that
\begin{equation}\label{condition:N,p}
\begin{array}{c}
\min(\mu,P)\geq m,\quad \|P\|_{L^\infty([0,T]\times\R^d)}
+\|\nabla P\|_{L^\infty_T(B^{d/{p_1}-1}_{p_1,1})}\leq M,\\[1ex]
\inf_{x\in \R^d,t\in [0,T]}S_NP(t,x)\geq m/2,\quad \|P-S_NP\|_{L^\infty_T(B^{d/{p_1}}_{p_1,1})}\leq c_{P,p_1,p_2}m.\end{array}
 \end{equation}

Then there exist constants $c_{2,p_2}(d,p_2,m),C_{2,p_1,p_2}(d,p_1,p_2,m,M,N),\tilde C_{2,p_2}(d,p_2,m)$, \\ $C_{3,p_1,p_2}(d,p_1,p_2,m,M,N)$ such that the following a priori estimates hold:

\begin{equation}\label{linearest:momentum,p}
\begin{split}
\|u\|_{\tilde L^\infty_t(B^{d/{p_2}-1}_{p_2,1})}&+c_{2,p_2}\|u\|_{L^1_t(B^{d/{p_2}+1}_{p_2,1})}
\\
& \leq e^{C_{2,p_1,p_2}(\|\nabla q\|_{L^1_t(B^{d/{p_2}}_{p_2,1})}+\|\nabla q\|_{L^{4/3}_t(B^{d/{p_2}-1/2}_{p_2,1})}^{4/3}
+\|\nabla\mu\|_{L^2_t(B^{d/{p_1}}_{p_1,1})}^2
+\|\nabla\mu\|_{L^{2/(1-\eta)}_t(B^{d/{p_1}-\eta}_{p_1,1})}^{2/(1-\eta)})}\\
&\times(\|u_0\|_{B^{d/{p_2}}_{p_2,1}}+\tilde C_{2,p}\|\Delta_{-1}u\|_{L^1_t(L^{p_2})}
+C_{2,p_1,p_2}\|h\|_{L^1_t(L^2\cap B^{d/{p_2}-1}_{p_2,1})}),
\end{split}
\end{equation}

with $\eta=\min(1/2,d/p_1),$ and 
\begin{equation}\label{linearest:Q,p}
\begin{split}
\|\nabla Q\|_{L^1_t(B^{\frac{d}{p_2}-1}_{p_2,1}\cap L^2)}
\leq C_{3,p_1,p_2}\int_0^t&\biggl(\|h\|_{L^2\cap B^{d/{p_2}-1}_{p_2,1}}
+\|\nabla q\|_{B^{d/{p_2}}_{p_2,1}}\|u\|_{B^{d/{p_2}-1}_{p_2,1}}
+\|u\cdot\nabla q\|_{L^2}\\&+\|\nabla\mu\|_{B^{d/{p_1}}_{p_1,1}}
\|\nabla u\|_{B^{d/{p_2}-1}_{p_2,1}}
+\|\nabla u\cdot\nabla\mu\|_{L^2}\biggr)d\tau.
\end{split}
\end{equation}
\end{prop}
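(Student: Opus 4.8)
The overall plan is to mimic the proof of Proposition~\ref{p:momentum}, upgrading the $L^2$ energy method for the velocity to an $L^{p_2}$ one, and, in the estimate for the pressure, combining the low-frequency information provided by the classical $L^2$ elliptic theory with a high-frequency analysis based on commutators. First I would localize: applying $\dj$ to \eqref{lineareq:momentum} gives, with $u_j:=\dj u$, $h_j:=\dj h$, $R^1_j:=[q,\dj]\cdot\nabla u$ and $R^2_j:=[\mu,\dj]\cdot\nabla u$,
$$
\d_t u_j+q\cdot\nabla u_j-\div(\mu\nabla u_j)=h_j+R^1_j-\div R^2_j-\dj(P\nabla Q).
$$
Multiplying by $|u_j|^{p_2-2}u_j$, integrating by parts (using $\div q=0$ and $\mu\geq m$), invoking the Bernstein-type inequality \eqref{equality:bernstein} (valid since $1<p_2<\infty$) to recover the parabolic gain $2^{2j}m\|u_j\|_{L^{p_2}}^{p_2}$, then dividing by $\|u_j\|_{L^{p_2}}^{p_2-1}$ and integrating in time yields, for $j\geq-1$,
$$
\|u_j\|_{L^\infty_t(L^{p_2})}+c_{2,p_2}2^{2j}\|u_j\|_{L^1_t(L^{p_2})}\lesssim\|(u_0)_j\|_{L^{p_2}}+\delta^{-1}_j\|\Delta_{-1}u\|_{L^1_t(L^{p_2})}+\|h_j\|_{L^1_t(L^{p_2})}+\|R^1_j\|_{L^1_t(L^{p_2})}+\|\div R^2_j\|_{L^1_t(L^{p_2})}+\|\dj(P\nabla Q)\|_{L^1_t(L^{p_2})}.
$$
After multiplying by $2^{j(d/p_2-1)}$ and summing over $j$, the matter reduces to estimating the commutators and the pressure. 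Since $q$ is $p_2$-indexed, $R^1_j$ is handled directly by Proposition~\ref{prop:commutator Besov} (and, to cover the possibly negative index $d/p_2-1$, after rewriting $R^1_j=\div([q,\dj]u)$ and using the version \eqref{est:com4}); since $\mu$ is $p_1$-indexed while $u$ is $p_2$-indexed, $\div R^2_j$ requires the two-Lebesgue-exponent versions of that proposition. The bilinear terms so produced — schematically $\|\nabla q\|_{B^{d/p_2+\nu-1}_{p_2,1}}\|u\|_{B^{d/p_2-\nu}_{p_2,1}}$ and $\|\nabla\mu\|_{B^{d/p_1+\nu-1}_{p_1,1}}\|u\|_{B^{d/p_2+1-\nu}_{p_2,1}}$ for $\nu\in(0,1]$ — are dealt with by interpolating the factor carrying $u$ between $\tilde L^\infty_t(B^{d/p_2-1}_{p_2,1})$ and $L^1_t(B^{d/p_2+1}_{p_2,1})$ and then using Young's inequality to absorb part of it into the parabolic term $c_{2,p_2}\|u\|_{L^1_t(B^{d/p_2+1}_{p_2,1})}$; choosing $\nu=1$ and $\nu=1/2$ (resp. $\nu=1$ and $\nu=1-\eta$, with $\eta=\min(1/2,d/p_1)$) then produces exactly the Gronwall weights $\|\nabla q\|_{L^1_t(B^{d/p_2}_{p_2,1})}$, $\|\nabla q\|^{4/3}_{L^{4/3}_t(B^{d/p_2-1/2}_{p_2,1})}$, $\|\nabla\mu\|^2_{L^2_t(B^{d/p_1}_{p_1,1})}$ and $\|\nabla\mu\|^{2/(1-\eta)}_{L^{2/(1-\eta)}_t(B^{d/p_1-\eta}_{p_1,1})}$ of \eqref{linearest:momentum,p}.

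The heart of the proof is the estimate for $\nabla Q$. Applying $\div$ to \eqref{lineareq:momentum} and using $\div u=\div q=0$ gives the \emph{variable-coefficient} elliptic equation $\div(P\nabla Q)=\div L$ with $L:=-u\cdot\nabla q+\nabla\mu\cdot(\nabla u)^{T}+h$. I would first test it against $Q$ to obtain $m\|\nabla Q\|_{L^2}\leq\|L\|_{L^2}$; this is the step that requires $h\in L^2$ and forces $u\cdot\nabla q$ and $\nabla u\cdot\nabla\mu$ to lie in $L^2$. It is precisely the Hölder-in-space and Bernstein bookkeeping needed to put these two products in $L^2$ — using the embeddings $B^{d/p_2-1}_{p_2,1}\hookrightarrow L^a$, $B^{d/p_1}_{p_1,1}\hookrightarrow L^b$ together with the parabolic gain on $u$ — that dictates the conditions $2\leq p_2\leq4$, $p_2\leq 2p_1/(p_1-2)$, $1/p_2\leq1/p_1+1/d$ and $(p_1,p_2)\neq(4,4)$ for $d=2$ in \eqref{condition:p,r}, and this is also where the weaker time norms of $\nabla q$ and $\nabla\mu$ reappear when $\|u\cdot\nabla q\|_{L^2}$, $\|\nabla u\cdot\nabla\mu\|_{L^2}$ are later re-expressed through Besov norms. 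Next, localizing the elliptic equation, testing $\dj$ of it against $Q_j$ and invoking the commutator estimate \eqref{est:com1} for $[P,\dj]\nabla Q$, I get for a suitable $\nu\in(0,1)$
$$
m\|\nabla Q\|_{L^1_t(B^{d/p_2-1}_{p_2,1})}\leq\|L\|_{L^1_t(B^{d/p_2-1}_{p_2,1})}+C\|\nabla P\|_{L^\infty_t(B^{d/p_1+\nu-1}_{p_1,1})}\|\nabla Q\|_{L^1_t(B^{d/p_2-1-\nu}_{p_2,1})}.
$$
Exactly as in Proposition~\ref{p:momentum}, I would then split $P=S_NP+(P-S_NP)$: the $S_NP$ contribution is controlled via $\|\nabla S_NP\|_{B^{d/p_1+\nu-1}_{p_1,1}}\lesssim2^{N\nu}\|\nabla P\|_{B^{d/p_1-1}_{p_1,1}}$ together with an estimate of the intermediate norm $\|\nabla Q\|_{B^{d/p_2-1-\nu}_{p_2,1}}$ against the $L^2$ bound on $\nabla Q$ — this is where $p_2\geq2$ is used, so that $\nabla Q\in L^2$ controls the low frequencies of $\nabla Q$ in $B^{d/p_2-1}_{p_2,1}$, distinguishing (as in that proof) the cases $d/p_2-1>0$ and $d/p_2-1=0$ — while the $P-S_NP$ contribution is absorbed by the left-hand side once $c_{P,p_1,p_2}$ is small enough in \eqref{condition:N,p}. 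Bounding $L$ with the product estimates of Proposition~\ref{prop:product Besov} (and their $L^2$ counterpart from the first substep) then gives \eqref{linearest:Q,p}.

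Finally, I would plug $\|P\nabla Q\|_{B^{d/p_2-1}_{p_2,1}}\lesssim\bigl(\|P\|_{L^\infty}+\|\nabla P\|_{B^{d/p_1-1}_{p_1,1}}\bigr)\|\nabla Q\|_{B^{d/p_2-1}_{p_2,1}}\lesssim M\|\nabla Q\|_{B^{d/p_2-1}_{p_2,1}}$, together with the pressure bound just obtained, into the summed-up velocity inequality, and conclude by Gronwall's lemma, thereby establishing \eqref{linearest:momentum,p}.

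The main obstacle is the $B^{d/p_2-1}_{p_2,1}$ estimate for the pressure: unlike in the pure $L^2$ framework, the commutator term $[P,\dj]\nabla Q$ cannot be closed on its own, so one genuinely has to feed in the extra $L^2$ control of $\nabla Q$ to handle its low frequencies — which in turn forces $L\in L^2$, i.e. precisely the algebraic restrictions \eqref{condition:p,r} — and the whole argument hinges on carefully tracking which Hölder-in-space and Bernstein embeddings are available, together with the somewhat delicate bookkeeping of the time-Lebesgue exponents that arise after interpolating the parabolic smoothing.
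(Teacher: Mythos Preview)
Your overall architecture matches the paper's proof, but three points deserve correction.

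First, when you localize the elliptic equation and write ``testing $\dj$ of it against $Q_j$'', that only yields $m\|\nabla Q_j\|_{L^2}\leq\|L_j\|_{L^2}+\|[P,\dj]\nabla Q\|_{L^2}$, i.e.\ an $L^2$-based bound, and summing $2^{j(d/p_2-1)}\|\nabla Q_j\|_{L^2}$ does not give $\|\nabla Q\|_{B^{d/p_2-1}_{p_2,1}}$ when $p_2>2$. The paper instead multiplies by $|Q_j|^{p_2-2}Q_j$, integrates by parts, and uses the Bernstein-type inequality \eqref{equality:bernstein} to obtain $\|\nabla Q_j\|_{L^{p_2}}\lesssim\|L_j\|_{L^{p_2}}+\|[P,\dj]\nabla Q\|_{L^{p_2}}$ for $j\geq0$, which is what you actually need.

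Second, your explanation of the extra Gronwall weights is off. In the paper only $\nu=1$ is used for $R^1_j$ and $\div R^2_j$, giving just $\|\nabla q\|_{B^{d/p_2}_{p_2,1}}\|u\|_{B^{d/p_2-1}_{p_2,1}}$ and $\|\nabla\mu\|_{B^{d/p_1}_{p_1,1}}\|\nabla u\|_{B^{d/p_2-1}_{p_2,1}}$. The weights $\|\nabla q\|_{L^{4/3}_t(B^{d/p_2-1/2}_{p_2,1})}^{4/3}$ and $\|\nabla\mu\|_{L^{2/(1-\eta)}_t(B^{d/{p_1}-\eta}_{p_1,1})}^{2/(1-\eta)}$ do \emph{not} come from choosing $\nu=1/2$ or $\nu=1-\eta$ in the commutators; they arise solely when the $L^2$ bounds $\|u\cdot\nabla q\|_{L^2}\lesssim\|u\|_{B^{d/p_2-1/2}_{p_2,1}}\|\nabla q\|_{B^{d/p_2-1/2}_{p_2,1}}$ and $\|\nabla u\cdot\nabla\mu\|_{L^2}\lesssim\|\nabla\mu\|_{B^{d/p_1-\eta}_{p_1,1}}\|\nabla u\|_{B^{d/p_2-1+\eta}_{p_2,1}}$ are fed back (via the pressure estimate and the term $P\nabla Q$) into the velocity inequality, and then interpolated and Young'ed.

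Third, the interpolation step for $\|\nabla Q\|_{B^{d/p_2-1-\nu}_{p_2,1}}$ is not split according to $d/p_2-1>0$ versus $=0$ (indeed $d/p_2-1$ may be negative here). The paper takes $\nu=1/4$, uses the embedding $L^2\hookrightarrow B^{d/p_2-d/2}_{p_2,2}$ (this is where $p_2\geq2$ enters), and distinguishes $d\geq3$ (interpolate $B^{d/p_2-5/4}_{p_2,1}$ between $B^{d/p_2-1}_{p_2,1}$ and $B^{d/p_2-d/2}_{p_2,2}$) from $d=2$ (where $d/p_2-5/4<d/p_2-d/2$, so the $L^2$ bound controls $\|\nabla Q\|_{B^{d/p_2-5/4}_{p_2,1}}$ directly, with no interpolation).
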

\begin{proof}

With the notation of  Proposition \ref{p:energy}, we have 
\begin{eqnarray}
\label{linearest:R1,p}
&&\|(2^{j(d/{p_2}-1)}\|R^1_j\|_{L^{p_2}})_{j\in \Z}\|_{\ell^1}\lesssim \|\nabla q\|_{B^{d/{p_2}}_{p_2,1}}
\|u\|_{B^{d/{p_2}-1}_{p_2,1}},\\
\label{linearest:R2,p}
&&\|(2^{j(d/{p_2}-1)}\|\div R^2_j\|_{L^{p_2}})_{j\in \Z}\|_{\ell^1}\lesssim
\|\nabla \mu\|_{B^{d/{p_1}}_{p_1,1}}\|\nabla u\|_{B^{d/{p_2}-1}_{p_2,1}},\\
\label{linearest:P,Q,p}
&&\|P\nabla Q\|_{B^{d/{p_2}-1}_{p_2,1}}\lesssim
\bigl(\|P\|_{L^\infty}+\|\nabla P\|_{B^{d/{p_1}-1}_{p_1,1}}\bigr)\|\nabla Q\|_{B^{d/{p_2}-1}_{p_2,1}}.
\end{eqnarray}
Indeed\footnote{
Here   it is understood that   the quote marks designate the indices in the original inequalities
 \eqref{est:product Besov1}, \eqref{est:com1} and \eqref{est:com4}.}
 Inequality \eqref{linearest:R1,p} follows from  \eqref{est:com1}
 with $``p_1"=``p_2"=p_2,\quad ``\nu"=1,\quad ``s"=d/{p_2}-1$
 (note that the condition   $p_2<2d$ is not required for $\div q=0,$ a consequence
 of \eqref{est:com4} with $``s"=d/p_2$ and $``\nu"=1$)
  while \eqref{linearest:R2,p} stems from \eqref{est:com4}
 with $``p_1"=p_2,\quad ``p_2"=p_1,\quad ``\nu"=1,\quad ``s"=d/{p_2}$
 (here we need that $1/{p_2}\leq 1/{p_1}+1/d$);
 and \eqref{linearest:P,Q,p} is a consequence of 
 the decomposition $P=\Delta_{-1}P+({\rm Id}-\Delta_{-1})P$ and of 
 \eqref{est:product Besov1}  
 with 
 $``s_1"=d/{p_1},\quad ``s_2"=d/{p_2}-1$
 (which requires that $1/{p_2}\leq 1/{p_1}+1/d$ and $1/{p_1}+1/{p_2}>1/d$). 
\medbreak
Now, granted with the above inequalities, the same procedure as in Proposition \ref{p:energy} yields
\begin{equation}\label{linearest:u,p,Q}\begin{split}
\|u\|_{\tilde L^\infty_t(B^{d/{p_2}-1}_{p_2,1})}+&c_{2,p_2}\|u\|_{L^1_t(B^{d/{p_2}+1}_{p_2,1})}
\leq \|u_0\|_{B^{d/{p_2}-1}_{p_2,1}}+c_{2,p_2}2^{-(d/{p_2}+1)}\|\Delta_{-1}u\|_{L^1_t(L^{p_2})}\\
&+C(d,p_1,p_2,m)\Int ^t_0 (\|\nabla q\|_{B^{d/{p_2}}_{p_2,1}}+\|\nabla\mu\|_{B^{d/{p_1}}_{p_1,1}}^2)\|u\|_{B^{d/{p_2}-1}_{p_2,1}}\,dt'\\
&+\|h\|_{L^1_t(B^{d/{p_2}-1}_{p_2,1})}+\bigl(\|P\|_{L_t^\infty(L^\infty)}
+\|\nabla P\|_{L^\infty_t(B^{d/{p_1}-1}_{p_1,1})}\bigr)\|\nabla Q\|_{L^1_t(B^{d/{p_2}-1}_{p_2,1})}.
\end{split}\end{equation}

So bounding $\nabla Q$ is our next task. 
First of all,  using the fact that $Q$ satisfies the elliptic equation
\eqref{lineareq:Q}, we still have 
$$
m\|\nabla Q\|_{L^2}\leq \|L\|_{L^2}
\quad\hbox{with}\quad L=h+u\cdot\nabla q+\nabla u\cdot \nabla\mu.
$$
Hence, given that $L^2\hookrightarrow B^{d/p_2-d/2}_{p_2,2}$ (here comes that $p_2\geq 2$),
we deduce that
\begin{equation}\label{linearest:Q,p,L2}
m\|\nabla Q\|_{B^{d/{p_2}-d/2}_{p_2,2}}\lesssim  \|L\|_{L^2}.
\end{equation}
Of course, this implies  that 
\begin{align}\label{linearest:Q_-1,p,3D}
m\|\nabla \Delta_{-1}Q\|_{L^{p_2}}\lesssim \|L\|_{L^2}.
\end{align}

In order to bound $\nabla Q$ in $B^{d/p_2-1}_{p_2,1},$ we use again the fact that 
 $$\div(P\nabla Q_j)=\nabla\cdot L_j+\nabla\cdot [P,\dj]\nabla Q.$$
Therefore, 
$$
m\int|Q_j|^{p_2-2}|\nabla Q_j|^2\,dx\lesssim \Int |Q_j|^{p_2-2}|\nabla Q_j\cdot(L_j+[P,\dj]\nabla Q)|\,dx.
$$
Taking advantage of  (\ref{equality:bernstein}), we get after a few computations: 
$$
\|\nabla Q_j\|_{L^{p_2}}
\lesssim  \|L_j\|_{L^{p_2}}+\|[P,\dj]\nabla Q\|_{L^{p_2}}\quad\hbox{for }\  j\geq 0.
$$

 Applying Inequality \eqref{est:com1} with
$$
``u"=P,\quad ``v"=\nabla Q,\quad ``p_1"=p_2,\quad ``p_2"=p_1,\quad ``\nu"=1/4,
\quad ``s"=d/{p_2}-1,
$$
(which is possible provided $1/{p_1}+1/{p_2}>1/d$ and $1/{p_2}\leq 1/{p_1}+5/(4d)$), 
we get 
\begin{equation}\label{linearest:P,Q,p,commute}
\|[P,\dj]\nabla Q\|_{L^{p_2}}\lesssim 2^{-j(d/{p_2}-1)}c_j\|\nabla P\|_{B^{d/{p_1}-3/4}_{p_1,1}}
\|\nabla Q\|_{B^{d/{p_2}-5/4}_{p_2,1}}\quad\hbox{with}\quad
\sum_j c_j=1.
\end{equation}
In the case $d\geq3,$ arguing by interpolation, we get
 \begin{equation}\label{linearest:interpo}
 \|\nabla Q\|_{B^{d/{p_2}-5/4}_{p_2,1}}\lesssim
\|\nabla Q\|_{B^{d/{p_2}-1}_{p_2,1}}^{\frac{2d-5}{2d-4}}\|\nabla Q\|_{B^{d/{p_2}-d/2}_{p_2,2}}^{\frac{1}{2d-4}}.\end{equation}

Therefore together with \eqref{linearest:Q,p,L2} and (\ref{linearest:Q_-1,p,3D}),  
this implies that
\begin{equation}\label{linearest:Q,p,l,P}
m\|\nabla Q\|_{B^{d/{p_2}-1}_{p_2,1}}\lesssim
\|L\|_{B^{d/{p_2}-1}_{p_2,1}}+\bigl(1+\|\nabla P\|_{B^{d/{p_1}-3/4}_{p_1,1}}\bigr)^{2d-4}\|L\|_{L^2}.
\end{equation}

In the case $d=2,$ the interpolation inequality \eqref{linearest:interpo} fails.
However, from (\ref{linearest:Q,p,L2}) and (\ref{linearest:P,Q,p,commute}) we directly get for $p_1,p_2$ satisfying (\ref{condition:p,r}),
\begin{equation}\label{linearest:Q,p,2D}\begin{split}
m\|\nabla Q\|_{B^{2/{p_2}-1}_{p_2,1}}
&\lesssim \|L\|_{B^{2/{p_2}-1}_{p_2,1}}
+\|\nabla P\|_{B^{2/{p_1}-3/4}_{p_1,1}}
\|\nabla Q\|_{B^{2/{p_2}-5/4}_{p_2,1}}\\
&\lesssim \|L\|_{B^{2/{p_2}-1}_{p_2,1}}
+(1+\|\nabla P\|_{B^{2/{p_1}-3/4}_{p_1,1}})
\|L\|_{L^2}.
\end{split}\end{equation}

Therefore, in any dimension $d\geq2,$ we have
\begin{equation}\label{eq:Q}
m\|\nabla Q\|_{B^{d/{p_2}-1}_{p_2,1}\cap L^2}
\leq C_{Q,d,p_1,p_2} \bigl(1+\|\nabla P\|_{B^{d/{p_1}-3/4}_{p_1,1}}\bigr)^{\max(1,2d-4)}
\|L\|_{B^{d/{p_2}-1}_{p_2,1}\cap L^2}.
\end{equation}

In order to treat the case where $\nabla P$ is only in $L^\infty_T(B^{d/p_1-1}_{p_1,1}),$
we proceed exactly as 
in the proof of Proposition  \ref{p:momentum}, decomposing  $P$ into two parts, the smooth large part $S_N P$ and the small rough part $P-S_NP.$ 
Under the same assumptions as in  (\ref{linearest:P,Q,p}), we find that
$$
\|E_N\|_{B^{d/{p_2}-1}_{p_2,1}}\leq C_{P,p_1,p_2}\|P-P_N\|_{B^{d/{p_1}}_{p_1,1}}\|\nabla Q\|_{B^{d/{p_2}-1}_{p_2,1}}.
$$
 Therefore, if $c_{P,p_1,p_2}$ is small enough in (\ref{condition:N,p}), we get 
\begin{equation}\label{linearest:Q,p,l}
\|\nabla Q\|_{L_t^1(B^{d/{p_2}-1}_{p_2,1}\cap L^2)}\leq
 C_{Q,d,p_1,p_2} 2^{N/4}\bigl(1+\|\nabla P\|_{L^\infty_T(B^{d/{p_1}-1}_{p_1,1})}\bigr)^{\max(1,2d-4)}
\|L\|_{L^1_T(B^{d/{p_2}-1}_{p_2,1}\cap L^2)}.
\end{equation}

In order to complete the proof of \eqref{linearest:Q,p}, we now have to bound $L.$
First, we notice that, applying  
 (\ref{est:product Besov1})  with
$``p_1"=``p_2"=p_2,$ $``s_1"=d/{p_2}$ and $``s_2"=d/{p_2}-1$ 
 yields, if $p_2<2d,$ 
\begin{equation}\label{linearest:q,u,p}
\|u\cdot\nabla  q\|_{B^{d/{p_2}-1}_{p_2,1}}\lesssim \|\nabla q\|_{B^{d/{p_2}}_{p_2,1}}
\|u\|_{B^{d/{p_2-1}}_{p_2,1}}.
\end{equation}
If $2d\leq p_2<\infty$ then, owing to $\div u=0,$ the same inequality 
is true. Indeed  applying Bony's decomposition, we discover that
$$
u\cdot\nabla q=T_u\nabla q+T_{\nabla q}u+\div R(u,q-\Delta_{-1}q)+ R(u,\Delta_{-1}\nabla q).
$$
The first two terms may be bounded as in \eqref{linearest:q,u,p}.
For the third one, one has (because $\cF(q-\Delta_{-1}q)$ is supported away from the origin)
$$\begin{array}{lll}
\|\div R(u,q-\Delta_{-1}q)\|_{{B^{d/{p_2}-1}_{p_2,1}}}&\lesssim&
\| R(u,q-\Delta_{-1}q)\|_{{B^{d/{p_2}}_{p_2,1}}}\\
&\lesssim&  \| u\|_{B^{d/{p_2}-1}_{p_2,1}}
\|q-\Delta_{-1}q\|_{B^{d/{p_2+1}}_{p_2,1}}
\lesssim   \| u\|_{B^{d/{p_2}-1}_{p_2,1}}
\|\nabla q\|_{{B^{d/{p_2}}_{p_2,1}}}.\end{array}
$$
And finally, we have 
$$
R(u,\Delta_{-1}\nabla q)=\sum_{-1\leq j\leq0} \Delta_j\Delta_{-1}\nabla q \:
(\Delta_{j-1}\!+\!\Delta j\!+\!\Delta_{j+1})u,
$$
so it is clear that we have
$$
\|R(u,\Delta_{-1}\nabla q)\|_{B^{d/{p_2}-1}_{p_2,1}}\lesssim
\|R(u,\Delta_{-1}\nabla q)\|_{L^{p_2}}
\lesssim \|\nabla q\|_{L^\infty}\|S_2u\|_{L^{p_2}}
\lesssim \|\nabla q\|_{B^{d/{p_2}}_{p_2,1}}
\|u\|_{B^{d/{p_2-1}}_{p_2,1}}.
$$
Next, just as in (\ref{linearest:P,Q,p}), under the conditions $1/p_2\leq1/p_1+1/d$ and 
$1/p_1+1/p_2>1/d,$ we have
\begin{equation}\label{linearest:P,u,p}
\|\nabla u\cdot\nabla \mu\|_{B^{d/{p_2}-1}_{p_2,1}}\lesssim \|\nabla \mu\|_{B^{d/{p_1}}_{p_1,1}}\|\nabla u\|_{B^{d/{p_2}-1}_{p_2,1}}. 
\end{equation}
This gives \eqref{linearest:Q,p}.
\medbreak
In order to complete the proof of the lemma, 
we still have to bound $u\cdot\nabla q$ and $\nabla u\cdot\nabla \mu$
 in $L^2.$
 To handle the former term, we just 
 use the fact that
 $$
 B^{d/p_2-1/2}_{p_2,1}\hookrightarrow L^4\quad\hbox{if}\quad p_2\leq4.
$$ 
Hence, by virtue of H\"older's inequality,
\begin{equation}\label{linearest:q,u,p,L2}
\|u\cdot \nabla q\|_{L^2}\lesssim \|u\|_{B^{d/{p_2}-1/2}_{p_2,1}}\|\nabla q\|_{B^{d/{p_2}-1/2}_{p_2,1}}.\end{equation}

Concerning the latter term, if both $p_1$ and $p_2$ are less than or equal 
to $4$ then one may merely use the embedding
$$
 B^{d/p_1-1/2}_{p_1,1}\hookrightarrow L^4\quad\hbox{and}\quad
  B^{d/p_2-1/2}_{p_2,1}\hookrightarrow L^4,
$$ 
hence
$$
\|\nabla u\cdot\nabla \mu\|_{L^2}\lesssim \|\nabla \mu\|_{B^{d/{p_1}-1/2}_{p_1,1}}\|\nabla u\|_{B^{d/{p_2}-1/2}_{p_2,1}}
$$
Now, if $p_1>4$  then we first write 
$$
\|\nabla u\cdot\nabla \mu\|_{L^2}\leq \|\nabla\mu\|_{L^{p_1}}
\|\nabla u\|_{L^{\tilde p_1}}\quad\hbox{with }\ \tilde p_1=\frac{2p_1}{p_1-2}\cdotp
$$
Let $\eta=\min(1/2,d/p_1).$ 
Then we notice that if $p_2\leq\tilde p_1$ then 
$$
B^{d/p_1-\eta}_{p_1,1}\hookrightarrow L^{p_1}\quad\hbox{and}\quad
B^{d/p_2-1+\eta}_{p_2,1}\hookrightarrow L^{\tilde p_1}.
$$
Therefore 
$$
\|\nabla u\cdot\nabla \mu\|_{L^2}\lesssim \|\nabla \mu\|_{B^{d/{p_1}-\eta}_{p_1,1}}\|\nabla u\|_{B^{d/{p_2}-1+\eta}_{p_2,1}}.
$$

Together with (\ref{linearest:u,p,Q}), interpolation inequalities  and Gronwall lemma, 
this enables us to complete the proof of (\ref{linearest:momentum,p}).
\end{proof}

\begin{rem}\label{linearrem:small time}
The quantities $\tilde C_{1,p_1}\|\Delta_{-1}\theta\|_{L^1_t(L^{p_1})}$, $\tilde C_2\|\Delta_{-1}u\|_{L^1_t(L^2)}$ and  $\tilde C_{2,p_2}\|\Delta_{-1}u\|_{L^1_t(L^{p_2})}$ in the a priori estimates \eqref{linearest:energy,p}, \eqref{linearest:momentum} and \eqref{linearest:momentum,p} respectively can be absorbed if the time $t$ is small. Indeed,  for instance, 
one has for any $s\in\R,$ 
$$
\|\Delta_{-1}\theta\|_{L^1_t(L^{p_1})}
\lesssim \|\theta\|_{L^1_t(B^{s}_{{p_1},1})}
\leq t \|\theta\|_{L^\infty_t(B^{s}_{{p_1},1})},
$$
hence $\tilde C_{1,p_1}\|\Delta_{-1}\theta\|_{L^1_t(L^{p_1})}$ can be absorbed
 by the left-hand side if $t$ is small.
 
 In the case of the linearized momentum equation, 
  plugging \eqref{linearest:momentum} in \eqref{linearest:Q,p}, we thus deduce that, for
  small enough time, one has for some constant $C$ depending only on $d,p_1,p_2,m,M,N,$
  $$\displaylines{
  \|\nabla Q\|_{L^1_t(B^{\frac{d}{p_2}-1}_{p_2,1}\cap L^2)}
\leq C\|h\|_{L_t^1(L^2\cap B^{d/{p_2}-1}_{p_2,1})} \hfill\cr\hfill
+\biggl(\|u_0\|_{B^{d/p_2}_{p_2,1}}
+\|h\|_{L_t^1(L^2\cap B^{d/{p_2}-1}_{p_2,1})}\biggr)
\biggl(e^{C\int_0^t(\|\nabla q\|_{B^{d/p_2}_{p_2,1}}+
\|\nabla q\|_{B^{d/p_2-1/2}_{p_2,1}}^{4/3}+\|\nabla\mu\|_{B^{d/p_1}_{p_1,1}}^2
+\|\nabla\mu\|_{B^{d/p_1-\eta}_{p_1,1}}^{2/(1-\eta)})\,d\tau}-1\biggr).}   
  $$
  \end{rem}

\begin{rem}
Compared to the statement of Proposition \ref{th:Stokes} in the case
$s=d/p_2-1$ one has to assume in addition that $p_2\leq 4$
and also that $p_2\leq 2p_1/(p_1-2)$ (if $p_1\geq2$). This is due to the fact
that bounding $\nabla Q,$ through the elliptic 
equation \eqref{lineareq:Q} requires  a $L^2$ information 
over the right-hand side, that is on $h$ and on quadratic terms.
   The naive idea is just
that, according to H\"older's inequality 
 $L^4$ bounds over  $\nabla q,$ $u,$ $\nabla u$ and $\nabla\mu$
provides this $L^2$ bound. This is the key  to go beyond the energy framework
for \eqref{lineareq:momentum}.  At the same time, we do not know how to treat the case $p_2>4.$
\end{rem}

%%%%%%%%%%%%%%%%%%%%%%%%%%%%%%%%%%%%%

\subsection{The proof of the well-posedness in the fully nonhomogeneous  case}

We follow the same procedure as in the proof of Theorem \ref{th:global}: 
first we  construct a sequence of  approximate solutions, then we
prove uniform bounds for this sequence and finally, we show the convergence to some  solution
of \eqref{equation:local}. Compared to the almost homogeneous case, 
the main difference is that our estimates rely mostly on Propositions \ref{p:energy}
and \ref{p:momentum,p}. Furthermore, in order to  handle large data, 
we will  have to introduce  the ``free solution'' 
 $(\theta_L,u_L)$  corresponding to data $(\theta_0,u_0),$ namely the solution to 
\begin{equation}\label{eq:libre}
\left\{\begin{array}{ccc}
\d_t\theta_L-\bar\kappa\Delta\theta_L&=&0,\\
\d_t u_L-\bar\mu\Delta u_L&=&0,\\
(\theta_L,u_L)|_{t=0}&=&(\theta_0,u_0),
\end{array}\right.
\end{equation}
with  $\bar\kappa=\kappa(1)$ and $\bar\mu=\mu(1).$

\subsubsection*{Step 1. Construction of a sequence of approximate solutions}

As $\theta_0$ is in $B^{d/p_1}_{p_1,1}$ and $u_0,$ in $B^{d/p_2-1}_{p_2,1},$
the above System \eqref{eq:libre} has a unique global solution 
$(\theta_L,u_L)$ with (see e.g. \cite{BCD})
$$
\theta_L\in\tilde C_T(B^{d/p_1}_{p_1,1})\cap L_T^1(B^{d/p_1+2}_{p_1,1})
\quad\hbox{and}\quad
u_L\in\tilde C_T(B^{d/p_2-1}_{p_2,1})\cap L_T^1(B^{d/p_2+1}_{p_2,1})\quad\hbox{for all }\ T>0,
$$
and we have (if $T$ is small enough and 
with $C$ depending only on $d,p_1,p_2$)
\begin{equation}\label{eq:local,libre}\begin{array}{lll}
\|\theta_L\|_{\tilde L_T^\infty(B^{d/p_1}_{p_1,1})}
+\bar\kappa\|\theta_L\|_{L_T^1(B^{d/p_1+2}_{p_1,1})} &\!\!\!\leq\!\!\!&C \|\theta_0\|_{B^{d/p_1}_{p_1,1}},\\[1ex]
\|u_L\|_{\tilde L_T^\infty(B^{d/p_2-1}_{p_2,1})}
+\bar\mu\|u_L\|_{L_T^1(B^{d/p_2+1}_{p_2,1})}&\!\!\!\leq\!\!\!& C\|u_0\|_{B^{d/p_2-1}_{p_2,1}}.
\end{array}
\end{equation}

Note also that the divergence free property for the initial velocity is
conserved during the evolution. 
Another important feature  is that, owing to $\theta_L\in\tilde L_T^\infty(B^{d/p_1}_{p_1,1}),$
we have, for any $T>0,$
\begin{equation}\label{eq:local,N}
\lim_{N\rightarrow+\infty} \|\theta_L-S_N\theta_L\|_{L^\infty_T(B^{d/p_1}_{p_1,1})}=0.
\end{equation}
Let us fix some small enough positive time $T.$
Given \eqref{eq:local,N}, we see that for any positive constant~$c,$ there exists some
 positive integer $N_0$  so that
\begin{equation}\label{eq:local,0}
\|\theta_L-S_{N_0}\theta_L\|_{L_T^\infty(B^{d/p_1}_{p_1,1})} \leq cm.
\end{equation}
In addition, if the data satisfy \eqref{cond:local initial} then one may assume that
we have (changing $N_0$ and $C$ if need be)
\begin{equation}\label{eq:local,0b}
\frac m2\leq   S_n\vartheta_0\leq CM, \quad \|S_n\theta_0\|_{ B^{d/p_1}_{p_1,1}}+ \|  S_n u_0\|_{  B^{d/p_2-1}_{p_2,1}}\leq CM\quad\textrm{for all}\quad n\geq N_0.
\end{equation}

In order to define our approximate solutions, we use the following iterative scheme:
first we set $(\theta^{0},u^{0},\nabla Q^0)=(  S_{N_0}\theta_0,  S_{N_0}u_0,0)$
(this is obviously a smooth stationary function with  decay at infinity)
then, assuming that the approximate solution $(\theta^n,u^n,\nabla Q^n)$ 
has been constructed over $\R^+\times\R^d,$ 
 we set  $\vartheta^n=1+\theta^n$ and define 
 $(\theta^{n+1},u^{n+1},\nabla Q^{n+1})$ to be the unique solution of the system
\begin{equation*}
\left\{\begin{array}{ccc}
\d_{t}\theta^{n+1}+u^n\cdot\nabla\theta^{n+1} -\div(\kappa^n\nabla\theta^{n+1})&=&f^n,\\
\d_{t}u^{n+1}+u^{n}\cdot\nabla u^{n+1}-\div(\mu^n\nabla u^{n+1})+\vartheta^n\nabla Q^{n+1}&=&h^n,\\
\div u^{n+1}&=&0,\\
(\theta^{n+1},u^{n+1})|_{t=0}&=&(  S_{N_0+n+1}\theta_0,  S_{N_0+n+1}u_0),
\end{array}\right.
\end{equation*}
where
$$
\kappa^n=\lambda k(\vartheta^n),
\quad \mu^n=\beta\zeta(\vartheta^n),
\quad f^n=f(1+\theta^n,u^n)\ \hbox{ and }\
 h^n=h(1+\theta^n,u^n).
$$

Note that the existence and uniqueness of a global smooth solution for the above
system is ensured by the standard theory of parabolic equations
(concerning $\theta^{n+1}$) and by (a slight modification of) Theorem 2.10 in \cite{Alazard06}
(concerning $u^{n+1}$) whenever $(\theta^n,u^n)$ is suitably 
smooth and the coefficients $\kappa^n,\mu^n$ are bounded
by above and by below. 
In fact, given \eqref{eq:local,0b}, the maximum principle ensures that
\begin{equation}\label{eq:ellipticity}
 m/2\leq \vartheta^n\leq CM.
\end{equation}
Hence $\kappa^n$ and $\mu^n$ are bounded by above and from  below
independently of $n.$ 

Next, we notice that if we set 
\begin{equation*}
(\theta^{n}_L,u^{n}_L,\nabla Q^{n}_L)=(  S_{N_0+n}\theta_L, S_{N_0+n}u_L,0)
\end{equation*}
then the equation for $(\bar{\theta}^{n+1},\bar{u}^{n+1},\nabla\bar{Q}^{n+1})
=(\theta^{n+1}-\theta^{n+1}_L,u^{n+1}-u^{n+1}_L,\nabla Q^{n+1}-0)$ reads
\begin{equation*}\left\{
\begin{array}{ccc}
\d_t\bar{\theta}^{n+1}+u^n\cdot\nabla\bar{\theta}^{n+1}
-\div(\kappa^n\nabla\bar{\theta}^{n+1})&=&
F^n,\\
\d_{t}\bar{u}^{n+1}+u^n\cdot\nabla\bar{u}^{n+1}
-\div(\mu^n\nabla\bar{u}^{n+1})
+(1+\theta^n_L)\nabla\bar{Q}^{n+1}&=&
H^n,\\
\div\bar{u}^{n+1}&=&0,\\
(\bar{\theta}^{n+1},\bar{u}^{n+1})|_{t=0}&=&(0,0),
\end{array}
\right.\end{equation*}

where
$$\displaylines{
F^n=-u^n\cdot\nabla\theta_L^{n+1}
+\div((\kappa^n-\bar\kappa)\nabla\theta_L^{n+1})
+f^n,\cr
H^n=-u^n\cdot\nabla u_L^{n+1}
+\div((\mu^n-\bar\mu)\nabla u_L^{n+1})
-\bar\theta^n\nabla\bar Q^{n+1}+h^n.}
$$

Let us point out that, given \eqref{eq:local,0}, we have (up to a harmless
change of $c$)
\begin{equation}\label{eq:local,N_0}
\|\theta_L^n-S_{N_0}\theta_L^n\|_{L_T^\infty(B^{d/p_1}_{p_1,1})} \leq cm
\quad\hbox{for all }\ n\in\N.
\end{equation}

%%%%%%%%%%%%%%%%%%%%%%%%%

\subsubsection*{Step 2. Uniform bounds}

Bounding $(\bar\theta^{n+1},\bar u^{n+1},\nabla\bar Q^{n+1})$
in terms of the free solution $(\theta_L,u_L)$ and of 
 $(\bar\theta^{n},\bar u^{n},\nabla\bar Q^{n})$
relies on  Propositions   \ref{p:energy} and  \ref{p:momentum,p}
with $s=d/p_1$ and  $s=d/p_2-1,$ respectively, and $N=N_0$
(here we have to take $c$ small enough in \eqref{eq:local,N_0}). 
Using the fact that, as pointed out by Remark \ref{linearrem:small time}, 
taking $T$ smaller if needed allows to discard 
$\tilde C_1\|\Delta_{-1}\bar\theta^{n+1}\|_{L^1_t(L^{p_1})}$  and 
$\tilde C_2\|\Delta_{-1}\bar u^{n+1}\|_{L^1_t(L^{p_2})}$ in the estimates, 
we get,  under the condition (\ref{cond:local,p,r}),
$$
\displaylines{\|\bar\theta^{n+1}\|_{X^{p_1}(T)}\leq
C e^{C_1(\|\nabla u^n\|_{L^1_T(  B^{d/p_2}_{p_2,1})}
+\|\nabla \kappa^n\|^2_{L^2_T( B^{d/p_1}_{p_1,1})})}
\|F^n\|_{L^1_T( B^{d/p_1}_{p_1,1})},\cr
\|\bar u^{n+1}\|_{Y^{p_2}(T)} 
+\|\nabla\bar Q^{n+1}\|_{Z^{p_2}(T)}\leq C\|H^n\|_{L^1_T(B^{d/p_2-1}_{p_2,1}\cap L^2)}\hfill\cr\hfill\times
e^{C_2(\|\nabla u^n\|_{L^1_T( {B}^{d/p_2}_{p_2,1})}
+\|\nabla u^n\|_{L^{4/3}_T( {B}^{d/p_2-1/2}_{p_2,1})}^{4/3}
+\|\nabla\mu^n\|^2_{L^2_T( {B}^{d/p_1}_{p_1,1})}
+\|\nabla\mu^n\|^{4}_{L^{4}_T( {B}^{d/p_1-1/2}_{p_1,1})})}.}
$$
{}From Proposition \ref{prop:action Besov} and elementary interpolation inequalities,
we gather that all the terms in the exponential may be bounded by
$\|\theta^n\|_{X^{p_1}(T)}+\|u^n\|_{Y^{p_2}(T)}$ to some power. 
Therefore, if we assume that 
\begin{equation}\label{eq:local,bound}
\|\theta^n\|_{X^{p_1}(T)}+\|u^n\|_{Y^{p_2}(T)}\leq 2CM
\end{equation}
then we have\footnote{In all that follows, 
we denote by $C_M$ a suitable increasing function of $M.$
To simplify the notation, we omit the dependency with respect to 
$d,$ $N,$ $p_1,$ $p_2,$ etc.}
\begin{equation}\label{eq:local,theta}
\|\bar\theta^{n+1}\|_{X^{p_1}(T)}\leq C_M\Bigl(\|f^n\|_{L^1_T( B^{d/p_1}_{p_1,1})}
+\|u^n\cdot\nabla\theta_L^{n+1}\|_{L^1_T( B^{d/p_1}_{p_1,1})}
+\|(\kappa^n-\bar\kappa)\nabla\theta_L^{n+1}\|_{L^1_T( B^{d/p_1+1}_{p_1,1})}\Bigr),
\end{equation}
\begin{align}\label{eq:local,u}
&\|\bar u^{n+1}\|_{Y^{p_2}(T)}+\|\nabla\bar Q^{n+1}\|_{Z^{p_2}(T)}
\leq C_M\Bigl(\|h^n\|_{L^1_T( {B}^{d/p_2-1}_{p_2,1}\cap L^2)}+\|u^n\cdot\nabla u_L^{n\!+\!1}\|_{L^1_T( {B}^{d/p_2-1}_{p_2,1}\cap L^2})\nonumber\\
&\qquad\qquad\qquad+\|\div((\mu^n-\bar\mu)\nabla u_L^{n+1})\|_{L^1_T( {B}^{d/p_2-1}_{p_2,1}\cap L^2)}
+\|\bar\theta^n\nabla\bar Q^{n\!+\!1}\|_{L^1_T( {B}^{d/p_2-1}_{p_2,1}\cap L^2)}\Bigr).\qquad\quad
\end{align}

So bounding the right-hand sides of \eqref{eq:local,theta} and of \eqref{eq:local,u}
is our next task.
Given \eqref{eq:local,bound}, we easily get  from Propositions \ref{prop:product Besov} and \ref{prop:action Besov}:
$$\begin{array}{rcl}
\|f^n\|_{L^1_T( B^{d/p_1}_{p_1,1})}&\!\!\!\leq\!\!\!& 
C_M \|\nabla\theta^n\|_{L_T^2(B^{d/p_1}_{p_1,1})}^2,\\[1.5ex]
\|(\kappa^n-\bar\kappa)\nabla\theta_L^{n+1}\|_{L^1_T( B^{d/p_1+1}_{p_1,1})}
&\!\!\!\leq\!\!\!& C_M\bigl(\|\theta^n\|_{L^\infty_T(B^{d/p_1}_{p_1,1})}
\|\nabla\theta_L\|_{L^1_T(B^{d/p_1+1}_{p_1,1})}\\
&&\qquad\qquad\qquad+\|\nabla\theta_L\|_{L^2_T(B^{d/p_1}_{p_1,1})}\|\theta^n\|_{L^2_T(B^{d/p_1+1}_{p_1,1})}\bigr).
\end{array}
$$
If  $p_2\leq p_1,$  the space $B^{d/p_2}_{p_2,1}$ is embedded
in  the Banach algebra $B^{d/p_1}_{p_1,1}.$ Hence
\begin{equation}\label{eq:local,2}
\|u^n\cdot\nabla\theta_L^{n+1}\|_{L^1_T( B^{d/p_1}_{p_1,1})}
\leq C\|u^n\|_{L^2_T(B^{d/p_2}_{p_2,1})}\|\nabla\theta_L\|_{L^2_T(B^{d/p_1}_{p_1,1})}.
\end{equation}
If  $p_1<p_2$ then \eqref{eq:local,2} is no longer true. 
 However, from Bony's decomposition  and  
 Proposition \ref{p:paraproduct}, it is not difficult to get that
 $$
 \|u^n\cdot\nabla\theta_L^{n+1}\|_{L^1_T( B^{d/p_1}_{p_1,1})}
\lesssim \|u^n\|_{L^2_T(B^{d/p_2}_{p_2,1})}\|\nabla\theta_L\|_{L^2_T(B^{d/p_1}_{p_1,1})}
+ \|u^n\|_{L^{2/(1\!-\!\varepsilon)}_T(B^{d/p_2-\varepsilon}_{p_2,1})}\|\nabla\theta_L\|_{L^{2/(1+\varepsilon)}_T(B^{d/p_1+\varepsilon}_{p_1,1})}
$$ 
 whenever $\varepsilon\in[0,1]$ and $d/p_1\leq1-\varepsilon+d/p_2.$
\smallbreak

Next, computations similar to those that enable us to bound $h^n$
in the homogeneous framework lead to 
\begin{align*}
&\|h^n\|_{L^1_T( {B}^{d/p_2-1}_{p_2,1})}\leq C_M\Bigl(
\|\nabla\theta^n\|_{L_T^2(B^{d/p_1}_{p_1,1})}^2+
\|\nabla^2\theta^n\|_{L_T^2(B^{d/p_1-1}_{p_1,1})}
\|\nabla\theta^n\|_{L_T^2(B^{d/p_1}_{p_1,1})}\\
&\qquad\qquad\quad+\|\nabla^2\theta^n\|_{L^{{2}/{(2-\varepsilon)}}_T(B^{d/p_1-\varepsilon}_{p_1,1})}\|\nabla\theta^n\|_{L^{{2}/{\varepsilon}}_T(B^{d/p_1-1+\varepsilon}_{p_1,1})}+\|\nabla\theta^n\|_{L_T^2(B^{d/p_1}_{p_1,1})}\|\nabla u^n\|_{L_T^2(B^{d/p_2-1}_{p_2,1})}
\Bigr),
\end{align*}
provided   
$$p_1<2d, \quad p_1\leq 2p_2,\quad
 \frac 1{p_1}+\frac{1}{p_2}>\frac{1}{d}\ \hbox{ and }\ 
 \frac 1{p_2}\leq \frac{1}{p_1}+\frac{1}{d}-\frac{\varepsilon}{d}\ \hbox{ for some }\   
\varepsilon\in[0,1],$$
and for $p_1\leq 4$,
\begin{align*}
&
\|h^n\|_{L^1_T(L^2)}\leq C_M\Bigl(\|\nabla\theta^n\|_{L^2_T(L^\infty)}\|\nabla\theta^n\|_{L^4_T(B^{d/p_1-1/2}_{p_1,1})}^2\\
&\qquad\qquad+\|\nabla^2\theta^n\|_{L^{4/3}_T(B^{d/p_1-1/2}_{p_1,1})}\|\nabla\theta^n\|_{L^{4}_T(B^{d/p_1-1/2}_{p_1,1})}+\|\nabla\theta^n\|_{L^4_T(B^{d/p_1-1/2}_{p_1,1})}\|\nabla u^n\|_{L^{4/3}_T(B^{d/p_2-1/2}_{p_2,1})}\Bigr).
\end{align*}

Under Condition \eqref{cond:local,p,r},
 H\"older inequality, Propositions \ref{prop:product Besov} and \ref{prop:action Besov} also give
\begin{align*}
&\|u^n\!\cdot\!\nabla u_L^{n+1}\|_{L^1_T( {B}^{d/p_2\!-\!1}_{p_2,1}\cap L^2)}
\lesssim\|u^n\|_{L^2_T(B^{d/p_2}_{p_2,1})}
\|u_L\|_{L^{2}_T( {B}^{d/p_2}_{p_2,1})}\!+\!\|u^n\|_{L^4_T(B^{d/p_2\!-\!1/2}_{p_2,1})}
\|\nabla u_L\|_{L^{4/3}_T( {B}^{d/p_2-1/2}_{p_2,1})}\\[1ex]
&\|\div\bigl((\mu^n\!-\!\bar\mu)\nabla u_L^{n\!+\!1}\bigr)\|_{L^1_T( {B}^{d/p_2-1}_{p_2,1}\cap L^2)}
\lesssim \|\nabla\theta^n\|_{L^2_T(B^{d/p_1}_{p_1,1})}
\|\nabla u_L\|_{L^2_T( {B}^{d/p_2-1}_{p_2,1})}\\
&\qquad\qquad\qquad\qquad+\|\theta^n\|_{L^{\infty}_T(B^{d/p_1}_{p_1,1})}
\|u_L\|_{L^1_T( {B}^{d/p_2+1}_{p_2,1})}  +\|\nabla\theta^n\|_{L^4_T(B^{d/p_1-1/2}_{p_1,1})}
\|\nabla u_L\|_{L^{4/3}_T( {B}^{d/p_2-1/2}_{p_2,1})},\\[1ex]
&\|\bar\theta^n\nabla\bar Q^{n+1}\|_{L^1_T( {B}^{d/p_2-1}_{p_2,1}\cap L^2)}
\lesssim
 \|\bar\theta^n\|_{L^\infty_T(B^{d/p_1}_{p_1,1})}
\|\nabla\bar Q^{n+1}\|_{L^1_T( {B}^{d/p_2-1}_{p_2,1}\cap L^2)}.
\end{align*}

Let us fix some small positive constant  $\tau_M$
that we shall specify later on and let us assume that $T$ has been chosen so that
\begin{equation}\label{eq:local,free}
\|\theta_L\|_{L^{2/\varepsilon}_T(B^{d/{p_1}+\varepsilon}_{p_1,1})\cap L^{4}_T(B^{d/p_1+1/2}_{p_1,1})\cap L_T^1(B^{d/p_1+2}_{p_1,1})}+\|u_L\|_{L^4_T(B^{d/p_2-1/2}_{p_2,1})\cap L_T^1(B^{d/p_2+1}_{p_2,1})}\leq\tau_M.
\end{equation}

Note that, in order that   the above condition is  satisfied for some positive $T$
even if $\theta_0$ is large,    we have to rule out the 
case $\varepsilon=0.$ 
This accounts for the strict inequality in the conditions
$$
d/p_1<1+d/p_2\quad\hbox{and}\quad d/p_2<1+d/p_1,
$$
that we did not have in the statement of Theorem \ref{th:local}. 

\medbreak

Now,   plugging all the above estimates in \eqref{eq:local,theta} and \eqref{eq:local,u} yields
(up to a harmless change of $C_M$)
\begin{align*}
&\|\bar\theta^{n+1}\|_{X^{p_1}(T)}+\|\bar u^{n+1}\|_{Y^{p_2}(T)}+\|\nabla\bar Q^{n+1}\|_{Z^{p_2}(T)}\\
&\qquad\qquad\qquad\qquad\qquad\qquad\leq 
C_M\Bigl(\|\bar\theta^n\|_{X^{p_1}(T)}(\|\bar\theta^n\|_{X^{p_1}(T)}+\|\bar u^n\|_{Y^{p_2}(T)}+\|\nabla\bar Q^{n+1}\|_{Z^{p_2}(T)})\\
&\qquad\qquad\qquad\qquad\qquad\qquad\qquad\qquad+\tau_M\bigl(\|\bar\theta^{n}\|_{X^{p_1}(T)}+\|\bar u^{n}\|_{Y^{p_2}(T)}\bigr)
+\tau_M^2+\tau_M\|\theta_L\|_{L_T^\infty(B^{d/p_1}_{p_1,1})}\Bigr).
\end{align*}

Using \eqref{eq:local,libre} so as to bound the last term, we see that
 if we   assume that 
\begin{equation}\label{eq:local,bar}
\|\bar\theta^{n}\|_{X^{p_1}(T)}+\|\bar u^{n}\|_{Y^{p_2}(T)}+\|\nabla\bar Q^{n}\|_{Z^{p_2}(T)}\leq K\tau_M
\end{equation}
for some $K=K(M)$ that we shall choose below, 
and take  $\tau_M$ so that
$$
C_M\tau_MK\leq 1/2
$$
then 
$$\|\bar\theta^{n+1}\|_{X^{p_1}(T)}+\|\bar u^{n+1}\|_{Y^{p_2}(T)}+\|\nabla\bar Q^{n+1}\|_{Z^{p_2}(T)}\leq 2C_M\tau_M\bigl(M+(K^2+K+1)\tau_M\bigr).
$$
Hence
$(\bar\theta^{n+1},\bar u^{n+1},\nabla\bar Q^{n+1})$ satisfies \eqref{eq:local,bar} too
if we take $K=2C_M(1+M)$ and assume that $\tau_M$ also satisfies
$$
(1+K+K^2)\tau_M\leq 1.
$$

This completes the proof of a priori estimates on any interval 
$[0,T]$ such that \eqref{eq:local,free} is fulfilled.

%%%%%%%%%%%%%%%%%%%%%

\subsubsection*{Step 3. Convergence}

The equation for $(\dt^{n+1},\du^{n+1},\nabla\dQ^{n+1})=
(\theta^{n+1}-\theta^{n},u^{n+1}-u^{n},\nabla Q^{n+1}-\nabla Q^{n})$ reads
\begin{equation*}\left\{\begin{array}{ccc}
\d_t\dt^{n+1}+u^n\cdot\nabla\dt^{n+1}-\div(\kappa^n\nabla\dt^{n+1})
&=&I^n,\\
\d_t\du^{n+1}+u^n\cdot\nabla\du^{n+1}-\div(\mu^n\nabla\du^{n+1})
+(1+\theta^n_L)\nabla\dQ^{n+1}&=&J^n,\\
\div \du^{n+1}&=&0,\\
(\dt^{n+1},\du^{n+1})|_{t=0}&=&( \Delta_{N_0+n}\theta_0, \Delta_{N_0+n} u_0),
\end{array}\right.\end{equation*}
where
\begin{align*}
I^n=&-\du^n\cdot\nabla\theta^n+\div(\delta\!\kappa^n\nabla\theta^n)+f^n-f^{n-1},\\
J^n=&-\du^n\cdot\nabla u^n+\div(\delta\!\mu^n\nabla u^n)-\dt^n\nabla Q^n-\bar\theta^n\nabla\dQ^{n+1}+h^n-h^{n-1}.
\end{align*}

Let $b_n=2^{(N_0+n)d/p_1}\|\Delta_{N_0+n}\theta_0\|_{L^{p_1}}$ and 
$d_n=2^{(N_0+n)(d/p_2-1)}\|\Delta_{N_0+n} u_0\|_{L^{p_2}}.$
Since $\theta_0\in  B^{d/p_1}_{p_1,1}$ and $u_0\in B^{d/p_2-1}_{p_2,1}$, we have
$(b_n)\in\ell^1$ and $(d_n)\in\ell^1.$
\smallbreak

To simplify the presentation, we assume that $p_1\geq p_2.$ Then,
applying Propositions \ref{p:energy} and \ref{p:momentum,p}
and using the bounds of the previous step,  we get (bearing in mind that if $\tau_M$
is sufficiently small in \eqref{eq:local,bar} then one may  
absorb $\bar\theta^n\nabla\dQ^{n+1}$):
$$\displaylines{
\|\dt^{n+1}\|_{X^{p_1}(T)}\leq Cb_n+C_M\Bigl(\bigl(\|\du^n\|_{L^2_T(B^{d/p_2}_{p_2,1})}
+\|\dt^n\|_{L^2_T(B^{d/p_1+1}_{p_1,1})}\bigr)
\|\theta^n\|_{L^2_T(B^{d/p_1+1}_{p_1,1})}\hfill\cr\hfill
+\|\dt^n\|_{L^\infty_T(B^{d/p_1}_{p_1,1})}\bigl(\|\theta^n\|_{L^1_T(B^{d/p_1+2}_{p_1,1})}
+\|\theta^n\|_{L^2_T(B^{d/p_1+1}_{p_1,1})}^2\bigr)
\hfill\cr\hfill+\|\dt^n\|_{L^2_T(B^{d/p_1+1}_{p_1,1})}\|\theta^{n-1}\|_{L^2_T(B^{d/p_1+1}_{p_1,1})}\Bigr),}
$$
$$\displaylines{\|\du^{n+1}\|_{Y^{p_2}(T)}+\|\nabla\dQ^{n+1}\|_{Z^{p_2}(T)}
\leq Cd_n\hfill\cr\hfill+C_M\Bigl(
\|\dt^n\|_{L^\infty_T(B^{d/p_1}_{p_1,1})}\bigl(\|(\theta^{n-1},\theta^n)\|^2_{L^2_T(B^{d/p_1+1}_{p_1,1})}+\|\theta^{n-1}\|_{L^1_T(B^{d/p_1+2}_{p_1,1})}+\|\nabla Q^n\|_{Z^{p_2}(T)}\bigr)
\hfill\cr\hfill
+\|\dt^n\|_{L^2_T(B^{d/p_1+1}_{p_1,1})}\bigl(\|(\theta^n,\theta^{n-1})\|_{L^2_T(B^{d/p_1+1}_{p_1,1})}+\|(\theta^n,\theta^{n-1})\|^2_{L^4_T(B^{d/p_1+1/2}_{p_1,1})}+\|u^n\|_{L^2_T(B^{d/p_2}_{p_2,1})}\bigr)
\hfill\cr\hfill
+\|\dt^n\|_{L^4_T(B^{d/p_1+1/2}_{p_1,1})}\bigl(\|u^n\|_{L^{4/3}_T(B^{d/p_2+1/2}_{p_2,1})}+\|\theta^{n-1}\|_{L^{4/3}_T(B^{d/p_1+3/2}_{p_1,1})}\bigr)
\hfill\cr\hfill+\|\dt^n\|_{L^{4/3}_T(B^{d/p_1+3/2}_{p_1,1})}\|\theta^n\|_{L^4_T(B^{d/p_1+1/2}_{p_1,1})}
+\|\dt^n\|_{L^{2/(2-\varepsilon)}_T(B^{d/p_1+2-\varepsilon}_{p_1,1})}\|\theta^n\|_{L^{2/\varepsilon}_T(B^{d/p_1+\eps}_{p_1,1})}
\hfill\cr\hfill
+\|\du^n\|_{L^2_T(B^{d/p_2}_{p_2,1})}\bigl(\|u^n\|_{L^{2}_T(B^{d/p_2}_{p_2,1})}+\|\theta^{n-1}\|_{L^{2}_T(B^{d/p_1+1}_{p_1,1})}\bigr)
\hfill\cr\hfill +\|\du^n\|_{L^{4}_T(B^{d/p_2-1/2}_{p_2,1})}\|u^n\|_{L^{4/3}_T(B^{d/p_2+1/2}_{p_2,1})}+\|\du^n\|_{L^{4/3}_T(B^{d/p_2+1/2}_{p_2,1})}\|\theta^{n-1}\|_{L^{4}_T(B^{d/p_1+1/2}_{p_1,1})}
\Bigr)}
$$
where $C=C(d,p_1,p_2).$
\smallbreak
Let us emphasize that, according to \eqref{eq:local,free} and \eqref{eq:local,bar}, 
the previous inequalities imply that, up to a change of $C_M,$ we have
$$
B^{n+1}(T)\leq  C_M\tau_MB^n(T)+C(b_n+d_n).
$$
with $B^n(T)=\|\dt^n\|_{X^{p_1}(T)}+\|\du^n\|_{Y^{p_2}(T)}+\|\nabla\dQ^n\|_{Z^{p_2}(T)}.$
\smallbreak
Therefore, taking  $\tau_M$ small enough,  we end up with
$$
B^{n+1}(T)\leq \frac{1}{2}B^n(T)+C(b_{n}+d_n).
$$

As $(b_n)$ and $(d_n)$ are in $\ell^1,$ 
one may thus conclude that $\sum(B^n(T))<\infty$ , which is to say $(\theta^n,u^n,\nabla Q^n)_{n\in\N}$ is a Cauchy sequence and converges to a solution $(\theta,u,\nabla Q)$ of the system (\ref{equation:local}) in the space $F_T^{p_1,p_2}$ which also satisfies the estimates (\ref{estimate:local}) (that $\vartheta\geq m$ is a consequence of the maximum principle
for the parabolic equation satisfied by $\vartheta$). 

%%%%%%%%%%%%%%%%%%%%

\subsubsection*{Step 4. Stability estimates and uniqueness}

To prove the stability, i.e. the continuity of the flow map, and the uniqueness,
 we consider  two solutions $(\theta^1,u^1,\nabla Q^1)$ and $(\theta^2,u^2,\nabla Q^2)$
  of System (\ref{equation:local}) in $F_T^{p_1,p_2}$ with initial data $(\theta^1_0,u^1_0)$ and $(\theta^2_0,u^2_0),$ respectively. 
    Let $\vartheta^1=1+\theta^1$ and $\vartheta^2=1+\theta^2.$
  We assume in addition that
   $$
  \vartheta^1,\vartheta^2\geq m
   $$
   and we fix some large enough integer $N_1$ so that
   $$m/2\leq 1+S_{N_1}\theta^1\quad\hbox{and}\quad
    \|\theta^1-S_{N_1}\theta^1\|_{L^\infty_T(B^{d/p_1}_{p_1,1})}\leq  cm
    $$
        with $c$ given by Condition \eqref{condition:N,p}.
       Finally, we denote by $M$ a common bound for the two 
       solutions in $F_T^{p_1,p_2}.$ 
        
    \smallbreak
    The proof goes from arguments similar to those of the previous step: 
    we notice that the difference of the two solutions
$(\dt,\du,\nabla\dQ):=(\theta^1-\theta^2,u^1-u^2,\nabla Q^1-\nabla Q^2)$ satisfies
  \begin{equation*}\left\{\begin{array}{ccc}
  \d_t\dt+u^1\cdot\nabla\dt-\div(\kappa(\theta^1)\nabla\dt)&=&I,\\
  \d_t\du+u^1\cdot\nabla\du-\div(\mu(\theta^1)\nabla\du)+\vartheta^1\nabla\delta Q&=&J,\\
  \div \du&=&0,\\
  (\dt,\du)|_{t=0}&=&(\dt_0,\du_0),
  \end{array} \right. \end{equation*}
where
\begin{align*}
I=&-\du\cdot\nabla\theta^2+\nabla\cdot(\delta\!\kappa\nabla\theta^2)+f(1+\theta^1)-f(1+\theta^2),\\
J=&-\du\cdot\nabla u^2+\nabla\cdot(\delta\!\mu\nabla u^2)-\dt\nabla Q^2+h(1+\theta^1,u^1)-h(1+\theta^2,u^2).
\end{align*}

Let $B(t)=\|\dt\|_{X^{p_1}(t)}+\|\du\|_{Y^{p_2}(t)}+\|\nabla\dQ\|_{Z^{p_2}(t)}.$ Then arguing exactly 
as in the previous step, we get for small enough $t,$
$$
\displaylines{
B(t)\leq C_{M,N_1}\Bigl(\|\dt_0\|_{B^{d/p_1}_{p_1,1}}+\|\du_0\|_{B^{d/p_2-1}_{p_2,1}}
+\bigl(\|(\theta^1,\theta^2)\|_{L^2_t(B^{d/p_1+1}_{p_1,1})\cap L^4_t(B^{d/p_1+1/2}_{p_1,1})}
\hfill\cr\hfill
+\|\theta^1\|_{L^{4/3}_t(B^{d/p_1+3/2}_{p_1,1})\cap L^{4}_t(B^{d/p_1+1/2}_{p_1,1})}
+\|\theta^2\|_{L^{1}_t(B^{d/p_1+2}_{p_1,1})\cap L^{2/\varepsilon}_t(B^{d/p_1+\varepsilon}_{p_1,1})}
\hfill\cr\hfill
+\|u^2\|_{L^2_t(B^{d/p_2}_{p_2,1})\cap L^{4/3}_t(B^{d/p_2+1/2}_{p_2,1})}
+\|\nabla Q^2\|_{Z^{p_2}(t)}\bigr)B(t)\Bigr).}
$$

If the initial data coincide then we have $B(0)=0.$  Given that the factors of 
$B(t)$ in the right-hand side go to $0$ when $t$ goes to $0,$
we  thus get $B\equiv0$ on a small enough time interval.
Then, from standard continuation arguments, we conclude
to uniqueness  on the whole time interval $[0,T].$

In the more general case where the initial data do not coincide, 
then one may split both solutions into 
$$
\theta^i=\theta^i_L+\bar\theta^i\quad\hbox{and}\quad
u^i=u^i_L+\bar u^i,
$$
where $(\theta^i_L,u^i_L)$ stands for the 
free solution of \eqref{eq:libre} pertaining 
to data $(\theta^i_0,u^i_0).$

If $\|\dt_0\|_{B^{d/p_1}_{p_1,1}}+\|\du_0\|_{B^{d/p_2-1}_{p_2,1}}\leq\delta$, then 
we have
$$
\|\theta_L^2-\theta_L^1\|_{X^{p_1}(T)}+
\|u_L^2-u_L^1\|_{Y^{p_2}(T)}\leq C\delta.
$$

By arguing as in step two, one may also prove that 
if $T$ is so small as to satisfy \eqref{eq:local,free} for (say)
$(\theta_L^1,u_L^1)$  and $\tau_M=\delta$ (with $\delta$ small enough) then 
$(\bar\theta^1,\bar u^1,\nabla\bar Q^1)$ satisfies 
\eqref{eq:local,bar}.
Therefore, from the above inequality, one may conclude that
 $B(T)\leq 2C_{M,N_1}\delta.$
 This completes the proof of  the continuity of the flow map.

\appendix
\section{Appendix}
\setcounter{equation}{0}

For the sake of completeness, we here prove  the commutator estimates
stated in  Proposition \ref{prop:commutator Besov}.
Throughout, it will be understood that   
$\|(c_j)_{j\in \Z}\|_{\ell^{r_2}}=\|(d_j)_{j\in \Z}\|_{\ell^{r_1}}=1$  and that 
\begin{equation}\label{eq:r}
\frac1r=\min\Bigl\{1,\frac{1}{r_1}+\frac{1}{r_2}\Bigr\}\cdotp
\end{equation}

Let $R_j(u,v):=[u,\dj]v$  and $\tilde{u}=u-\Delta_{-1}u.$
 Then we write the decomposition 
$$
R_j(u,v)=R_j^1(u,v)+R_j^2(u,v)+R_j^3(u,v)+R_j^4(u,v)+R_j^5(u,v)
$$
with 
$$
\begin{array}{c}
R_j^1(u,v):=[T_{\tilde  u},\dj]v,\quad
R_j^2(u,v):=T'_{\dj v} {\tilde u},\quad
R_j^3(u,v):= -\dj T_{v}\tilde   u,\\[2ex]
R_j^4(u,v):=-\dj R(\tilde   u,v),\qquad
R_j^5(u,v):=[\Delta_{-1}u,\dj]v. 
\end{array}
$$

Let us first prove inequalities \eqref{est:com1}, \eqref{est:com2} and \eqref{est:com3}. 
By virtue of the first-order Taylor's formula, we have
\begin{align*}
R_j^1(u,v)&=\sum_{|j-j'|\leq 4}[S_{j'-1}\tilde   u,\dj]\Delta_{j'}v\\
&=2^{-j'}\Int_{\R^d}\Int^1_0h(y)y\cdot\nabla S_{j'-1}\tilde   u(x-2^{-j}t'y)\Delta_{j'}v(x-2^{-j}y)\,dt'\,dy,
\end{align*}
hence
$$
\|R^1_j(u,v)\|_{L^{p_1}}\lesssim 2^{-j}\|\nabla S_{j-1}\tilde   u\|_{L^\infty}\|\dj v\|_{L^{p_1}},
$$
whence
$$
\bigl\|(2^{js}\|R^1_j(u,v)\|_{L^{p_1}})_{j\in \N}\bigr\|_{\ell^r}\lesssim\bigl\|(2^{j(\nu-1)}\|\nabla S_{j-1}\tilde u\|_{L^\infty}2^{j(s-\nu)}\|\dj v\|_{L^{p_1}})\bigr\|_{\ell^r}.
$$
So in the case $\nu=1,$ we readily get
\begin{equation}\label{commutorest:R1a}
\|(2^{js}\|R^1_j(u,v)\|_{L^{p_1}})_{j\in \N}\|_{\ell^r}\lesssim
\|\nabla\tilde   u\|_{L^\infty}\|v\|_{B^{s-1}_{p_1,r}}.
\end{equation}
To handle the case $\nu<1,$ we use the fact that 
$$%\displaylines{
2^{j(\nu-1)}\|\nabla S_{j-1}\tilde u\|_{L^\infty}2^{j(s-\nu)}\|\dj v\|_{L^{p_1}}
%\hfill\cr\hfill
\leq\!\sum_{j'\leq j-2} \!2^{(j-j')(\nu-1)}\bigl(2^{j'(\nu-1)}\|\nabla\Delta_{j'}\tilde u\|_{L^\infty}\bigr)
\bigl(2^{j(s-\nu)}\|\Delta_jv\|_{L^{p_1}}\bigr).%}
$$
Thus from H\"older and convolution inequalities for series and under assumption \eqref{eq:r}, one may conclude  that 
\begin{equation}\label{commutorest:R1b}
\|(2^{js}\|R^1_j\|_{L^{p_1}})_{j\in \N}\|_{\ell^r}\lesssim
\|\nabla \tilde   u\|_{B^{\nu-1}_{\infty,r_2}}\|v\|_{B^{s-\nu}_{{p_1},r_1}}\ \hbox{ if }\nu<1.
\end{equation}
Concerning $R^2_j(u,v)$, we have
$$
\|R^2_j(u,v)\|_{L^{p_1}}\leq\sum_{j'\geq j-3}\|\Delta_{j'}\tilde u\, S_{j'+2}\dj v\|_{L^{p_1}},
$$
and we consider the following two cases (still under Condition \eqref{eq:r}):
\begin{itemize}
\item $p_1\geq p_2$:
\begin{align*}
\qquad \qquad 2^{js}\|R^2_j(u,v)\|_{L^{p_1}}
&\leq 2^{js}\sum_{j'\geq j-3}\|\Delta_{j'}\tilde u\|_{L^{p_1}}\|S_{j'+2}\,\dj v\|_{L^\infty}\\
&\lesssim 2^{js}\sum_{j'\geq j-3}2^{j'd(\frac{1}{p_2}-\frac{1}{p_1})}\|\Delta_{j'}\tilde u\|_{L^{p_2}}\|\dj v\|_{L^\infty}\\
&\lesssim \sum_{j'\geq j-3}2^{(j-j')(\nu+\frac{d}{p_1})}c_{j'}
\|\tilde  u\|_{B^{\frac{d}{p_2}+\nu}_{p_2,r_2}}d_j\|v\|_{B^{s-\nu-\frac{d}{p_1}}_{\infty,r_1}},
\end{align*}

\item $p_1<p_2$:
\begin{align*}
\qquad \qquad 2^{js}\|R^2_j(u,v)\|_{L^{p_1}}
&\leq 2^{js}\sum_{j'\geq j-3}\|\Delta_{j'}\tilde u\|_{L^{p_2}}\|S_{j'+2}\dj v\|_{L^{\frac{p_1p_2}{p_2-p_1}}}\\
&\lesssim \sum_{j'\geq j-3}2^{(j-j')(\nu+\frac{d}{p_2})}c_{j'}
\|\tilde  u\|_{B^{\frac{d}{p_2}+\nu}_{p_2,r_2}}d_j\|v\|_{B^{s-\nu-\frac{d}{p_2}}_{\frac{p_1p_2}{p_2-p_1},r_1}}.
\end{align*}
\end{itemize}
Hence for $\nu>-d\min\{\frac{1}{p_1},\frac{1}{p_2}\}$,
\begin{equation}\label{commutorest:R2}
\|(2^{js}\|R^2_j(u,v)\|_{L^{p_1}})_{j\in \N}\|_{\ell^r}\lesssim
\|\tilde  u\|_{B^{\frac{d}{p_2}+\nu}_{p_2,r_2}}\|v\|_{B^{s-\nu}_{p_1,r_1}}.
\end{equation}

In the case $p_1\leq p_2,$ bounding $R^3_j(u,v)$ stems from 
\eqref{est:product Besov,paraproduct1}, \eqref{est:product Besov,paraproduct2} (and an obvious embedding
in the limit case). We get 
\begin{equation}\label{commutorest:R3}
\|(2^{js}\|R^3_j(u,v)\|_{L^{p_1}})_{j\in \N}\|_{\ell^r}\lesssim
\left\{\begin{array}{cc}
\|\tilde  u\|_{B^{\frac{d}{p_2}+\nu}_{p_2,r_2}}\|v\|_{B^{s-\nu}_{p_1,r_1}},&\hbox{ if }s<\nu+d/p_2,\\
\|\tilde  u\|_{B^{\frac{d}{p_2}+\nu}_{p_2,r}}\|v\|_{B^{s-\nu}_{p_1,1}},&\hbox{ if }s=\nu+d/p_2.
\end{array}\right.
\end{equation}
To deal with the case $p_1>p_2,$ we just have to notice that, 
according to \eqref{est:product Besov,paraproduct1}, \eqref{est:product Besov,paraproduct2}, the paraproduct operator maps 
$B^{s-\nu-\frac{d}{p_1}}_{\infty,r_1}\times B^{\frac{d}{p_1}+\nu}_{p_1,r_2}$
in $B^s_{p_1,r}$ provided that $s<\nu+d/p_1$ 
(and $L^\infty\times B^{\frac{d}{p_1}+\nu}_{p_1,r}$ 
in $B^s_{p_1,r}$ if $s=\nu+d/p_1$).
So  we still get \eqref{commutorest:R3} provided
$s<\nu+d/p_1$ and $s\leq\nu+d/p_1,$ respectively. 
\smallbreak

As for the fourth term, it is only a matter of applying Inequality 
\eqref{est:product Besov,remainder}. We get 
\begin{equation}\label{commutorest:R4}
\|(2^{js}\|R^4_j(u,v)\|_{L^{p_1}})_{j\in \N}\|_{\ell^r}\lesssim
\|\tilde  u\|_{B^{\frac{d}{p_2}+\nu}_{p_2,r_2}}\|v\|_{B^{s-\nu}_{p_1,r_1}},\hbox{ if } s>-d\min\Bigl\{\frac{1}{p'_1},\frac{1}{p_2}\Bigr\}\cdotp
\end{equation}

The term $R^5_j(u,v)$ may be treated by arguing like in the proof of (\ref{commutorest:R1a}). One ends up with
\begin{equation}\label{commutorest:R5}
\|(2^{js}\|R^5_j(u,v)\|_{L^{p_1}})_{j\in \N}\|_{\ell^r}\lesssim
\|\nabla\Delta_{-1}  u\|_{L^\infty}\|v\|_{B^{s-\nu}_{p_1,r}},\qquad\hbox{ if }\nu\leq  1.
\end{equation}
Given that for any $(s,p,r),$ one has (owing to the low-frequency cut-off)
$$
\|\tilde u\|_{B^s_{p,r}}\lesssim \|\nabla u\|_{B^{s-1}_{p,r}},
$$
putting together 
 (\ref{commutorest:R1a}),  (\ref{commutorest:R1b}),  (\ref{commutorest:R2}), (\ref{commutorest:R3}), (\ref{commutorest:R4}) and (\ref{commutorest:R5}) 
 completes the proof of  \eqref{est:com1}, \eqref{est:com2} and \eqref{est:com3}. 
 \smallbreak
 In order to establish \eqref{est:com4}, we notice that the terms 
 $R_j^i(u,v)$ with $i\not=2$ are spectrally localized in balls of size $2^j.$
 Hence Bernstein inequality together with  (\ref{commutorest:R1a}),  (\ref{commutorest:R1b}),   (\ref{commutorest:R3}), (\ref{commutorest:R4}) and (\ref{commutorest:R5}) 
  ensures that  they satisfy the desired inequality
 under Condition  \eqref{eq:condcom}.
  
 On the other hand $R_j^2(u,v)$ does not have this  spectral localization property. 
 Let us just treat the case $p_1\geq p_2$
 to simplify the presentation. We have
 \begin{equation}\label{est:com5}
 \|\d_kR_j^2(u,v)\|_{L^{p_1}}\leq \sum_{j'\geq j-3}\Bigl(\|\Delta_{j'}\tilde u\|_{L^{p_1}}
 \|\d_kS_{j'+2}\dj v\|_{L^\infty}+
 \|\d_k\Delta_{j'}\tilde u\|_{L^{p_1}}
 \|S_{j'+2}\dj v\|_{L^\infty}\Bigr).
 \end{equation}
According to Bernstein's inequality, we have 
$$  \sum_{j'\geq j-3}\|\Delta_{j'}\tilde u\|_{L^{p_1}}
 \|\d_kS_{j'+2}\dj v\|_{L^\infty}\leq C2^j  \sum_{j'\geq j-3}\|\Delta_{j'}\tilde u\|_{L^{p_1}}
 \|S_{j'+2}\dj v\|_{L^\infty}.
 $$
 Hence this term may be bounded as desired (just follow the previous computations). 
 \smallbreak
 In order to handle the second term of \eqref{est:com5}, we  write that, 
 according to Bersntein's inequality, 
 $$ \|\d_k\Delta_{j'}\tilde u\|_{L^{p_1}}
 \|S_{j'+2}\dj v\|_{L^\infty}\leq C2^{j'} \|\Delta_{j'}\tilde u\|_{L^{p_1}}
 \|S_{j'+2}\dj v\|_{L^\infty}.
 $$
 Hence 
 $$
 2^{j(s-1)} \sum_{j'\geq j-3}\|\d_k\Delta_{j'}\tilde u\|_{L^{p_1}}
 \|S_{j'+2}\dj v\|_{L^\infty}\leq C
  \sum_{j'\geq j-3}2^{(j-j')(\nu+\frac{d}{p_1}-1)}c_{j'}
\|\tilde  u\|_{B^{\frac{d}{p_2}+\nu}_{p_2,r_2}}d_j\|v\|_{B^{s-\nu-\frac{d}{p_1}}_{\infty,r_1}},
$$
 which leads to the desired inequality provided that $\nu+\frac{d}{p_1}-1>0.$
  \qed

\bibliographystyle{plain}
\bibliography{LowMach}

\end{document}